\documentclass[oneside]{article}
\usepackage[a4paper, total={6in, 8in}]{geometry}
\usepackage{bm}
\usepackage{amsmath}
\usepackage{hyperref}
\usepackage{multicol}
\usepackage{amsfonts}
\usepackage{graphicx}
\usepackage{amsthm}
\usepackage{cleveref}
\usepackage{subcaption}
\usepackage{mathrsfs}
\usepackage{authblk}
\usepackage{float}
\usepackage{xcolor}
\usepackage{amssymb} % Provides \Bbbk
\usepackage{amsmath} % Provides \mathbb
\usepackage{xcolor}
\usepackage{bm} % for bold math symbols
\usepackage{newtxtext}
\usepackage{newtxmath}
\usepackage{graphicx}
\usepackage{subcaption}
\usepackage{sidecap} % Allows side captions
\usepackage{appendix}

\usepackage[LGR,T1]{fontenc}

\usepackage{amsmath, amsthm, amssymb, verbatim, comment, tikz, pgfplots, mathtools, mathrsfs, ifthen, import, float, subcaption, enumitem, todonotes, bbm}
\tikzset{>=latex} %FOR SETTING STRAIGHT ARROWHEADS
\usepackage{tikz-network}
\usetikzlibrary{decorations.pathreplacing}
\usepackage{pgfplots}

\usepackage{amsfonts}
\usepackage[ruled,vlined,linesnumbered]{algorithm2e}%for psuedo code\usepackage{algpseudocode}
\usepackage{fullpage}
\usepackage{xifthen}
\usepackage{enumerate}
\usepackage{titlesec}
\newcounter{reviewer}
\newcounter{point}[reviewer]
\newtheorem{theorem}{Theorem}[section]
\newtheorem{remark}[theorem]{Remark}

\newtheorem{lemma}[theorem]{Lemma}

\begin{document}
\title{\bf{A Least-Squares-Based Regularity-Conforming Neural Networks (LS-ReCoNNs) for Solving Parametric Transmission Problems
}}

\author{Shima Baharlouei\textsuperscript{1}\thanks{Corresponding author, email:shima.baharlouei@ehu.eus}\hspace{0.2cm},
~Jamie Taylor\textsuperscript{2},
~David Pardo\textsuperscript{1, 3, 4}\\%

\textsuperscript{1}{\small{University of the Basque Country (UPV/EHU), Leioa, Spain.}}\\%
\textsuperscript{2}{\small{Department of Mathematics, CUNEF Universidad, Madrid, Spain.}}\\%
\textsuperscript{3}{\small{Basque Center for Applied Mathematics (BCAM),  Bilbao, Spain.}}\\%
\textsuperscript{4}{\small{Ikerbasque (Basque Foundation For Sciences), Bilbao, Spain.}}}%
\date{}
\maketitle
\begin{abstract}

This article focuses on solving parametric transmission problems in one and two spatial dimensions. These problems belong to a class of partial differential equations that arise in the modeling of physical systems with heterogeneous materials. They often exhibit discontinuities across interfaces and singularities at points where interfaces intersect. To address these problems, we propose a new deep learning approach named {\it{Least-Squares-Based Regularity-Conforming Neural Network (LS-ReCoNN)}}. This approach proposes a loss function that is shown to be a consistent upper bound for the energy-norm error. The method represents the solution as the sum of a principal component and a singular component. The principal component is decomposed into smooth and gradient-jump parts, which capture both the regular solution behavior and reduced regularity across interfaces in one- and two-dimensional problems. The singular component is introduced to model junction singularities and it is approximated using basis functions computed from a one-dimensional finite element eigenvalue problem. For the principal component, a separated representation is employed, consisting of parameter-dependent coefficients and space-dependent functions. A deep neural network approximates the space-dependent functions, while the parameter-dependent coefficients are determined by a least-squares solver, where the optimal coefficients for each parameter instance are obtained online by solving a low-dimensional least-squares problem. Numerical experiments in one and two dimensions demonstrate that LS-ReCoNN effectively captures singularities while maintaining solution accuracy across a wide range of parameter values.

\end{abstract}
{\bf{Keywords}}: Parametric partial differential equations, Transmission problems, Singularity, Deep learning, Neural network, Least-squares, Least-squares-based neural network (LS-Net), Regularity-conforming neural network (ReCoNN), Physics-informed neural networks (PINN).\\
{\bf{Mathematics Subject Classification}}: 	35A17, 	68T07.

%%%%%%%%%%%%%%%%%%%%%%%%%%%%%%%%%%%%%%%%%%%%%%%%%%%%%%%
%%%%%%%%%%%%%%%%%%%%%%%%%%%%%%%%%%%%%%%%%%%%%%%%%%%%%%%
%%%%%%%%%%%%%%%%%%%%%%%%%%%%%%%%%%%%%%%%%%%%%%%%%%%%%%%
%%%%%%%%%%%%%%%%%%% New Section %%%%%%%%%%%%%%%%%%%%%%%
%%%%%%%%%%%%%%%%%%%%%%%%%%%%%%%%%%%%%%%%%%%%%%%%%%%%%%%
%%%%%%%%%%%%%%%%%%%%%%%%%%%%%%%%%%%%%%%%%%%%%%%%%%%%%%%
%%%%%%%%%%%%%%%%%%%%%%%%%%%%%%%%%%%%%%%%%%%%%%%%%%%%%%%

\section{Introduction}\label{sec_Introduction}

Transmission problems represent a fundamental class of partial differential equations (PDEs) that arise in different areas of knowledge, such as in materials science, fluid dynamics, or electromagnetics. In two-dimensional settings, solutions often exhibit discontinuities across material interfaces and singularities at their junctions. Although these singularities and discontinuities are often well understood analytically, they introduce significant numerical difficulties \cite{grisvard2011elliptic, li2000gradient}. In the finite element setting, the lack of regularity leads to reduced convergence rates \cite{babuvska1970finite}, while in Neural Network (NN) approximations with smooth activation functions, they can produce Gibbs-type instabilities, reducing solution accuracy due to instabilities related to numerical quadrature \cite{taylor2024regularity}.

%%%%%%%%%%%%%%%%%%%%%%%%%%%%%%%%%%%%%%%%%%%%%%%%%%%%%%%
%%%%%%%%%%%%%%%%%% New Paragraph %%%%%%%%%%%%%%%%%%%%%%
%%%%%%%%%%%%%%%%%%%%%%%%%%%%%%%%%%%%%%%%%%%%%%%%%%%%%%%

A wide range of classical methods have been developed to solve transmission problems \cite{heinrich2003nitsche, bonnafont2024finite, brenner2003multigrid}.  Among them, we encounter mesh-adaptive methods guided by a {\it{posteriori}} error estimators \cite{babuvska1978posteriori, carstensen1997adaptive, lin2026adaptive}. These methods are particularly effective for transmission problems, as they efficiently resolve singularities through localized mesh refinement in regions of reduced regularity. Another widely used techniques are based on Domain Decomposition Methods (DDMs) \cite{glowinski1991fourth}, which split the global problem into local subproblems. Each subproblem is solved independently while enforcing transmission conditions across interfaces to ensure continuity and compatibility of the global solution.
Classical methods offer several advantages based on decades of research in the area. However, they can become computationally expensive, an issue that is further amplified in parametric settings where numerous problem instances must be solved.

%%%%%%%%%%%%%%%%%%%%%%%%%%%%%%%%%%%%%%%%%%%%%%%%%%%%%%%
%%%%%%%%%%%%%%%%%% New Paragraph %%%%%%%%%%%%%%%%%%%%%%
%%%%%%%%%%%%%%%%%%%%%%%%%%%%%%%%%%%%%%%%%%%%%%%%%%%%%%%

As a partial remedy for that, researchers have increasingly turned to neural networks (NNs) to solve parametric PDEs in the last decade \cite{bhattacharya2021model, brevis2024learning, dal2020data, geist2021numerical, han2018solving, khara2024neufenet, khoo2021solving, kutyniok2022theoretical, uriarte2022finite}. Indeed, the universal approximation theorem \cite{pinkus1999approximation}, along with subsequent generalizations \cite{chen1995approximation, chen1995universal}, demonstrate that NNs have the remarkable ability to approximate complex operators, including solution operators for parametric PDEs. However, existing methods like Physics-Informed Neural Networks (PINNs)~\cite{raissi2019physics} are unsuitable for solving transmission problems, since these problems exhibit low-regularity solutions; other methods like Variational PINNs (VPINNs) \cite{kharazmi2019variational} suffer from critical integration problems \cite{taylor2025stochastic} that are enhanced in presence of the Gibbs phenomena that appear when trying to approximate a discontinuous derivative with a continuous NN function.

%%%%%%%%%%%%%%%%%%%%%%%%%%%%%%%%%%%%%%%%%%%%%%%%%%%%%%%
%%%%%%%%%%%%%%%%%% New Paragraph %%%%%%%%%%%%%%%%%%%%%%
%%%%%%%%%%%%%%%%%%%%%%%%%%%%%%%%%%%%%%%%%%%%%%%%%%%%%%%

In \cite{taylor2024regularity}, we proposed a method to deal with low-regularity solutions using NNs. The key idea was to decompose the solution into a principal part and a singular part,
\begin{equation}
     \mathfrak{u}(x):= u(x) + u_s(x)
\end{equation}
where $\mathfrak{u}(x)$ is the solution, $u(x)$ denotes the principal component, and $u_s(x)$ captures the singular behavior. This decomposition relies on the assumption that the locations of discontinuities and singularities in the domain are known in advance. By embedding this information directly into the architecture, the NN automatically follows the expected regularity of the PDE solution. This architecture design, named Regularity-Conforming Neural Network (ReCoNN) \cite{taylor2024regularity}, leads to more stable training, faster convergence, and avoids spurious oscillations (such as Gibbs-type artefacts) near singularities.

%%%%%%%%%%%%%%%%%%%%%%%%%%%%%%%%%%%%%%%%%%%%%%%%%%%%%%%
%%%%%%%%%%%%%%%%%% New Paragraph %%%%%%%%%%%%%%%%%%%%%%
%%%%%%%%%%%%%%%%%%%%%%%%%%%%%%%%%%%%%%%%%%%%%%%%%%%%%%%

While ReCoNNs effectively handle singularities in transmission problems, parametric problems present an additional challenge: the need to efficiently approximate solutions across the parameter space. In \cite{baharlouei2024least}, we developed a method based on a separated representation of the solution $\mathfrak{u}(x ; p)$ in the form
\begin{align}
    \mathfrak{u}(x ; p) := \sum _{i = 1}^N c_i (p) u_i(x)
    , \qquad \forall p \in \mathbb{P},
    \label{eq2_PGD_decomposition}
\end{align} 
where $p$ is a parameter in the parameter space $\mathbb{P}$. The functions $u_i(x)$ were approximated by a vector-valued NN with $N$ outputs, while the coefficients $c_i(p)$ were determined by a least-squares (LS) solver for each parameter $p$. The resulting method is named the Least-Squares-Based Neural Network (LS-Net) method \cite{baharlouei2024least}. We demonstrated in \cite{baharlouei2024least} that under appropriate technical hypotheses on the parametric problem, a sufficiently large NN  combined with an LS solver, can approximate parametric solutions with any desired accuracy. Additionally, the LS-Net method maintains discretization invariance, meaning that coarse numerical integration can be used during training to reduce computational cost, while finer integration can be employed in post-processing when higher accuracy is required. This method shares similarities with Reduced-Order Modeling (ROM) \cite{azeez2001proper, rozza2008reduced, chinesta2011short}. Both approaches aim to identify low-dimensional subspaces --- span $\{u_i\}_{i=1}^N$ --- that effectively approximate the high-dimensional solution manifold of the underlying problem. Traditional ROM techniques construct this subspace using data-driven or projection-based strategies that involve solving many parameter instances. In contrast, the LS-Net method uses an NN to find these basis functions directly via consistent optimality criteria.

%%%%%%%%%%%%%%%%%%%%%%%%%%%%%%%%%%%%%%%%%%%%%%%%%%%%%%%
%%%%%%%%%%%%%%%%%% New Paragraph %%%%%%%%%%%%%%%%%%%%%%
%%%%%%%%%%%%%%%%%%%%%%%%%%%%%%%%%%%%%%%%%%%%%%%%%%%%%%%

In this work, we combine the two aforementioned strategies: ReCoNNs \cite{taylor2024regularity} to deal with singularities, and LS-Net \cite{baharlouei2024least} to solve parametric transmission problems. Furthermore, we replace the singular part of the solution in ReCoNNs by an FE eigenvalue solver to enhance accuracy. The proposed framework is built on three essential building blocks, each designed to address a distinct challenge:
\begin{itemize}
    \item \textbf{Designing the loss function for error control:} We design a quadratic loss function that enforces both the PDE residual and the flux continuity, while simultaneously providing a consistent upper bound for the energy-norm error of the approximate solution. Furthermore, the loss is formulated such that singularities do not need to be numerically integrated, which enhances numerical stability. Consequently,  minimizing this loss over the network parameters ensures that the approximate solution is guaranteed to converge toward the exact solution, with dependence on the singular component errors.
    
    \item 
    \textbf{Constructing the LS-ReCoNN solution:} The LS-ReCoNN solution is built from two complementary components: (a) a principal component that contains the smooth and gradient jump parts and is represented by a deep NN; and (b) a singular component, which captures localized features near singularities and is obtained using an FE eigenvalue solver applied to the associated 1D Sturm–Liouville problem. This separation leads to the following representation:
    \begin{align}
    \mathfrak{u}(x ; p) := \sum _{i = 1}^N c_i(p) {u}_i(x) + \sum _{j = 1}^M d_j(p) u^s_j(x, p),
    \qquad \forall p \in \mathbb{P}
    \label{eq2_LSReCoNN_decomposition}
    \end{align}
    which is a modified form of equation \eqref{eq2_PGD_decomposition}. 
    Herein, the functions $u^s_j(x, p)$ are approximated by $M$ eigenvectors, and the coefficients $c_i(p)$ and $d_j(p)$ are determined by an LS solver for each parameter $p$. Compared to the ReCoNN approach, where singular functions are trained in tandem with the rest of the NN, herein, we compute them directly with an FE eigenvalue solver. By relying on a one-dimensional FE eigenvalue problem, which is extremely fast to solve, we replace a costly training procedure with a simple and more accurate linear computation.

    \item \textbf{Handling the parametric dimension:} The LS-ReCoNN incorporates the LS-Net strategy to address the parametric dimension of the problem effectively. In this framework, training the NN focuses on learning and optimizing the space functions that, together with the eigenvalue components, form a low-dimensional representation of the parametric solution manifold. For each parameter instance, an LS solver computes the optimal coefficients within the learned reduced space. This enables accurate and efficient approximations across a broad range of parameters.

\end{itemize}

\noindent During training and inference, the computational workload of LS-ReCoNN is divided into four main tasks: (a) evaluating the NN and its gradients, (b) solving the 1D FE eigenvalue problems, (c) {constructing the LS systems, which can be decomposed into (c1) assembling parameter-independent components (the dominant cost in LS construction) and (c2) constructing a lightweight parameter-dependent combination} and (d) solving the LS systems. Crucially, the costs of (a) and {(c1)} are independent of the number of parameters because the NN is evaluated at fixed spatial points and the LS matrices are assembled from precomputed, parameter-independent components. The remaining parameter-dependent construction in (c2) introduces only a weak dependence on the number of parameters and remains inexpensive for moderately sized systems. While the costs of (b) and (d) scale linearly with the number of parameters, they involve simple 1D computations and small linear systems that are numerically inexpensive. As a result, for a moderate number of parameters, the total cost is dominated by the {NN evaluations and the parameter-independent LS assembly (c1),} making the overall execution time almost independent of the number of parameter instances.

%%%%%%%%%%%%%%%%%%%%%%%%%%%%%%%%%%%%%%%%%%%%%%%%%%%%%%%
%%%%%%%%%%%%%%%%%% New Paragraph %%%%%%%%%%%%%%%%%%%%%%
%%%%%%%%%%%%%%%%%%%%%%%%%%%%%%%%%%%%%%%%%%%%%%%%%%%%%%%

In addition, we observe in the numerical examples that training of the parametric problem often leads to better convergence behavior and superior quality solutions than the non-parametric problem. We believe this is due to the decrease in the number of local minima in the parametric problem. As a result, we observe surprisingly good results for the parametric problem as compared to the non-parametric one at a similar computational cost. This is a unique advantage of NNs in contrast to traditional FEMs.

%%%%%%%%%%%%%%%%%%%%%%%%%%%%%%%%%%%%%%%%%%%%%%%%%%%%%%%
%%%%%%%%%%%%%%%%%% New Paragraph %%%%%%%%%%%%%%%%%%%%%%
%%%%%%%%%%%%%%%%%%%%%%%%%%%%%%%%%%%%%%%%%%%%%%%%%%%%%%%

On the other hand, the LS-ReCoNN method also faces some limitations. It inherits the restrictions of both LS-net and ReCoNN approaches: the former is inherently tailored to linear parametric problems, while the latter relies on precise prior knowledge of low-regularity structures in the solution. As a result, the direct extension of LS-ReCoNN to more general problem classes becomes more challenging and may require additional modeling assumptions or problem-specific adaptations. Also, a direct extension of the proposed method to the 3D case would require the efficient solution of 2D eigenvalue problems.

%%%%%%%%%%%%%%%%%%%%%%%%%%%%%%%%%%%%%%%%%%%%%%%%%%%%%%%
%%%%%%%%%%%%%%%%%% New Paragraph %%%%%%%%%%%%%%%%%%%%%%
%%%%%%%%%%%%%%%%%%%%%%%%%%%%%%%%%%%%%%%%%%%%%%%%%%%%%%%

The remainder of the paper is structured as follows.
Section \ref{Model Problem} defines the parametric elliptic transmission problem and describes the decomposition of the solution. This section also introduces the continuous loss function used for error control. 
Section \ref{LS-ReCoNN Framework} details the LS-ReCoNN framework, covering the NN architecture and the approximation of singular components via an FE eigenvalue solver. Additionally, it presents the derivation of the LS-ReCoNN loss function and its discretization through Monte Carlo sampling. 
Section \ref{Analysis of Computational Cost} provides a quantitative analysis of the framework's computational efficiency. 
Then, Section \ref{Sec_Numerical Examples} introduces three numerical experiments in one and two dimensions, designed to illustrate the advantages and limitations of the proposed LS-ReCoNN method. 
Section \ref{Conclusion} provides concluding remarks and outlines potential directions for future research. Finally, Appendices \ref{Proof of H-Control by Residuals}, \ref{Implementation Details of the Trial Spaces}, \ref{Strong Formulation of the Decomposed Solution}, \ref{Proof of the Error Bound}, and \ref{Loss Discretization Details} present theoretical results, including supporting lemmas and detailed analysis that complement the main developments of this work.

%%%%%%%%%%%%%%%%%%%%%%%%%%%%%%%%%%%%%%%%%%%%%
%%%%%%%%%%%%%%%%%%%%%%%%%%%%%%%%%%%%%%%%%%%%%
%%%%%%%%%%%%%%%%%%%%%%%%%%%%%%%%%%%%%%%%%%%%%
%%%%%%%%%%%%%%%%%%%%%%%%%%%%%%%%%%%%%%%%%%%%%

\section{Model Problem: Parametric Elliptic Transmission Problem}\label{Model Problem}

\subsection{Formulation of the Problem}\label{Formulation of the problem}

\noindent We consider a transmission problem. In a bounded, Lipschitz domain $\Omega\subset\mathbb{R}^d$, we introduce the space 
\begin{equation}
    H^1_0(\Omega)=\{u\in L^2(\Omega):\nabla u\in L^2(\Omega)^d,\, u=0 \text{ on }\partial \Omega\}.
\end{equation}
Given positive $p\in L^\infty(\Omega)$, bounded away from zero, and $\ell^p\in L^2(\Omega)$, we consider the following PDE in weak form: To find $\mathfrak{u}^p\in H^1_0(\Omega)$ such that 
\begin{equation}\label{eqWeakForm}
\int_\Omega p\nabla \mathfrak{u}^p\cdot\nabla v\,dx = \int_\Omega \ell^p v\,dx,
\end{equation}
for all $v\in H^1_0(\Omega)$. The assumptions on $p$ guarantee the existence of a unique solution via the Lax-Milgram theorem~\cite{baharlouei2024least, babuvska1971error, chen2014inf}. The flux is defined to be $p\nabla \mathfrak{u}^p$, and the weak formulation \eqref{eqWeakForm} imposes that its distributional divergence satisfies $\text{div}(p\nabla \mathfrak{u}^p)=\ell^p\in L^2(\Omega)$, so that $p \nabla \mathfrak{u}^p \in H(\text{div},\Omega)=\{v\in L^2(\Omega)^d : \text{div}(v)\in L^2(\Omega)\}$. 

\noindent We consider the case where $p$ is piecewise constant over a polygonal partition of $\Omega$. This partition consists of a finite number of non-overlapping subdomains $\{\Omega_i\}_{i=1}^I$, such that $p(x) = p_i$ for each $\Omega_i$, where $p_i \in (0, \infty)$. 
In the 1D case, $\Omega$ is an interval divided into smaller sub-intervals. In two dimensions, for simplicity, we take $\Omega$ to be a rectangular domain divided into smaller rectangular subdomains aligned with the coordinate axes. In 2D, discontinuities in $p$ (i.e., material discontinuities) can induce singularities at vertices where three or more materials intersect. The set of interior interfaces, i.e., the common boundaries between adjacent subdomains $\Omega_i$ and $\Omega_j$ with $i \ne j$, is denoted by
% \begin{equation}
% \Gamma := \left\{ \gamma_{ij} = \partial \Omega_i \cap \partial \Omega_j \,;\, i \ne j \right\},
% \end{equation}
\begin{equation}
\Gamma := \bigcup_{i \neq j} \gamma_{ij} = \partial\Omega_i \cap \partial\Omega_j,
\end{equation}
where \( \partial \Omega_i \) is the boundary of subdomain \( \Omega_i \).
Note that for each pair \( i \ne j \), the interface satisfies \( \gamma_{ij} = \gamma_{ji} \). Figure~\ref{fig_geometry_1D_2D} provides a schematic illustration of the domain decomposition and associated notation for the one- and two-dimensional settings. 
\begin{figure}[htbp]
    \centering
    \hspace{0.5cm}
    \begin{subfigure}[t]{0.48\textwidth}
        \centering
        \raisebox{1cm}{
            \scalebox{0.85}{\begin{tikzpicture}[
    scale=1.5,
    interface/.style={thick, dashed},
    line/.style={thick}
]

% Main line for the domain
\draw[line] (-3, 0) -- (3, 0);

% Subdomain boundaries (interfaces)
\foreach \x in {-3, -2, -1, 0, 2, 3} {
    \draw[interface] (\x, -0.2) -- (\x, 0.2); % Dashed vertical lines
}

% Domain boundary labels
\node[below] at (-3, -0.3) {\(a \)};
\node[below] at (-3, 0.12) {$\bullet$};
\node[below] at (3, -0.3) {\(b \)};
\node[below] at (3, 0.12) {$\bullet$};

% Interface labels
\foreach \x/\label in {-2/\gamma_{12}, -1/\gamma_{23}, 0/\gamma_{34}, 2/\gamma_{(I-1) I}} {
    \node[below] at (\x, -0.3) {\(\label\)};

\node[below] at (-2, 0.12) {$\bullet$};
\node[below] at (-1, 0.12) {$\bullet$};
\node[below] at (0, 0.12) {$\bullet$};
\node[below] at (2, 0.12) {$\bullet$};

}

% Subdomain labels
\node[above] at (-2.5, -0.4) {\(\Omega_1\)};
\node[above] at (-1.5, -0.4) {\(\Omega_2\)};
 \node[above] at (-0.5, -0.4) {\(\Omega_3\)};
\node[above] at (2.5, -0.4) {\(\Omega_I\)};

\end{tikzpicture}}
        }
        \caption{One-dimensional domain.}
        \label{fig_geometry_1D}
    \end{subfigure}
    \hspace{-0.3cm}
    \begin{subfigure}[t]{0.48\textwidth}
        \centering
        \scalebox{0.85}{\definecolor{My_blue}{RGB}{0,0,102}
\definecolor{My_Orange}{RGB}{255, 178, 102}

\begin{tikzpicture}[
punkt/.style={
   rectangle,
   rounded corners,
   draw=black, thick,
   text width=2em,
   minimum height=1em,
   text centered}
]
    % Draw the square domain Omega
    \filldraw[fill=white, thick] (-2,-2) rectangle (0,0);

    \filldraw[fill=white, thick] (-2,0) rectangle (0,2);

    \filldraw[fill=white, thick] (0,0) rectangle (2,2);

    \filldraw[fill=white, thick] (0, -2) rectangle (2, 0);

    \filldraw[fill=white, thick] (2, -2) rectangle (4, 0);
    \filldraw[fill=white, thick] (2, 0) rectangle (4, 2);
\filldraw (0, 0) circle (2pt) node[anchor=north east] {$x_1$};
\filldraw (2, 0) circle (2pt) node[anchor=north east] {$x_2$};

\node at (2.8, -1) [right] {$\Omega_6$};
\node at (0.8, -1) [right] {$\Omega_5$};
\node at (-1.3, -1) [right] {$\Omega_4$};
\node at (2.8, 1) [right] {$\Omega_3$};
\node at (0.8, 1) [right] {$\Omega_2$};
\node at (-1.3, 1) [right] {$\Omega_1$};
% \node at (-0.3, -3) [right] {$\Omega$};

\node at (0, 1) [right] {$\gamma_{12}$};
\node at (0, -1) [right] {$\gamma_{45}$};
\node at (2, 1) [right] {$\gamma_{23}$};
\node at (2, -1) [right] {$\gamma_{56}$};
\node at (2.8, 0.2) [right] {$\gamma_{36}$};
\node at (0.8, 0.2) [right] {$\gamma_{25}$};
\node at (-1.3, 0.2) [right] {$\gamma_{14}$};

\end{tikzpicture}}
        \caption{Two-dimensional domain.}
        \label{fig:geometry_2D}
    \end{subfigure}
    \caption{Illustration of the domain, subdomains, and relative notations in one and two dimensions.}
    \label{fig_geometry_1D_2D}
\end{figure}
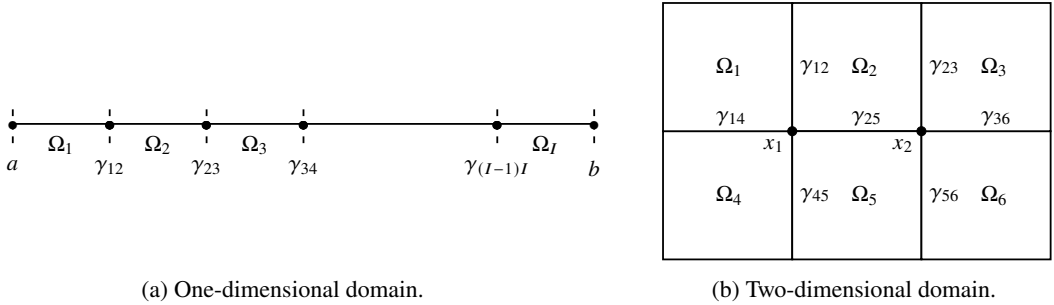

\noindent The discontinuities of $p$ at the set of interfaces $\Gamma$ generally limit the regularity of the solution $u^p$. However, standard elliptic regularity results \cite{evans2022partial} %\footnote{Evans, Partial Differential Equations, Section 6.2}
guarantee that $\mathfrak{u}^p$ is locally in $H^2$ away from the interfaces. In particular, $\mathfrak{u}^p$ satisfies the strong form equation 
\begin{equation}
    -\text{div}(p(x)\nabla \mathfrak{u}^p(x)) = \ell^p(x), 
\end{equation}
away from the interfaces. Furthermore, as the flux $p \nabla \mathfrak{u}^p$ is in $H(\text{div};\Omega)$, its normal component is continuous across interfaces \cite{boffi2013mixed}.
When $\mathfrak{u}^p$ is sufficiently regular on either side of the interface and $\mathbf{n}^{\pm}$ denotes the outward/inward unit normal vector to an interface $\gamma\in\Gamma$ with $\mathbf{n}^+=-\mathbf{n}^-$, the continuity of the flux across the interface may be expressed as 
\begin{equation}
    \left[p\nabla \mathfrak{u}^p\cdot \mathbf{n}\right]:=\lim\limits_{x\to \gamma^+}(p(x)\nabla \mathfrak{u}^p(x)\cdot \mathbf{n}^+(x))+\lim\limits_{x\to\gamma^-}(p(x)\nabla \mathfrak{u}^p(x)\cdot \mathbf{n}^-(x))=0.
\end{equation}
In general, the traces of the flux $p\nabla u\cdot \mathbf{n}$ must be interpreted as elements of $H^{-\frac{1}{2}}(\Gamma)$, however, their jump across the interface may trivially be understood as an element of $L^2$ at the exact solution $\mathfrak{u}^p$. Accordingly, we interpret the system
\begin{equation}\label{eq2_strong}
\begin{array}{r c l l}
-\text{div}(p\nabla \mathfrak{u}^p) &= &\ell ^p,&  \text{in }\Omega\setminus\Gamma,\\
\left[p\nabla \mathfrak{u}^p\cdot \mathbf{n}\right]&=& 0,& \text{in }\Gamma,
\end{array}
\end{equation}
as the strong form of \eqref{eqWeakForm}. It follows that the exact solution to the transmission problem belongs to the following parameter-dependent function space 
\begin{equation}\label{eq2_X_p space}
    \mathbb{X}^p = \{u\in H^1_0(\Omega): (p\nabla u)|_{\Omega_i}\in H(\text{div},\Omega_i), [p\nabla u\cdot \mathbf{n}]\in L^2(\Gamma)\}. 
\end{equation}
Note that in the definition of $\mathbb{X}^p$, we weaken the requirement on the flux $p \nabla u$ to the broken $H(\text{div})$ space. This is equivalent to requiring that $\nabla u$ itself belongs to the broken $H(\text{div})$ space, a condition that is easily built into the architecture in a parameter-independent manner. Similarly, in our discretization of $\mathbb{X}^p$, we will employ trial functions where both one-sided traces are in $L^2(\Gamma)$, removing parameter dependence of the condition that $[p\nabla u\cdot \mathbf{n}]\in L^2(\Gamma)$.
This space is a Hilbert space when endowed with the graph norm
\begin{equation}\label{eq2_graph norm}
\|u\|_g:=\left(\|u\|_{H^1_0}^2+\sum\limits_{i}\| \operatorname{div}(p\nabla u)\|_{L^2(\Omega_i)}^2 + \|[p\nabla u\cdot \mathbf{n}]\|_{L^2(\Gamma)}^2\right)^\frac{1}{2}. 
\end{equation}
\begin{theorem}\label{2_Theorem_1}
Let $u\in \mathbb{X}^p$ and $\Theta>0$. Then, there exists a constant $C>0$, depending only on $\Theta$ and the set of discontinuities $\Gamma$ of $p$, such that 
\begin{equation}
    \|u\|_{H^1_0}\leq \frac{C}{\min(p)}\sqrt{\|\text{div}(p\nabla u)\|_{L^2(\Omega)}^2 + \Theta\|[p\nabla u \cdot n]\|_{L^2(\Omega)}^2}
\end{equation}
\end{theorem}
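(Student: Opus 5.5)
Since $u\in\mathbb{X}^p$ belongs to $H^1_0(\Omega)$, the plan is to test against $u$ itself, integrate by parts subdomain by subdomain, and then bound the two resulting terms, an interior residual and an interface residual, by $\|u\|_{H^1_0}$ times the corresponding norms. By coercivity,
\begin{equation*}
\min(p)\,\|\nabla u\|_{L^2(\Omega)}^2\le \int_\Omega p\nabla u\cdot\nabla u\,dx=\sum_i\int_{\Omega_i}p\nabla u\cdot\nabla u\,dx .
\end{equation*}
On each $\Omega_i$ the flux $p\nabla u$ lies in $H(\text{div},\Omega_i)$, so Green's formula applies with normal traces in $H^{-1/2}(\partial\Omega_i)$:
\begin{equation*}
\int_{\Omega_i}p\nabla u\cdot\nabla u\,dx=-\int_{\Omega_i}\text{div}(p\nabla u)\,u\,dx+\langle p\nabla u\cdot\mathbf{n}_i,u\rangle_{\partial\Omega_i}.
\end{equation*}
When these are summed over $i$, the contributions on $\partial\Omega$ vanish because $u=0$ there. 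On each interior interface $\gamma_{ij}$ the two one-sided traces combine into the jump $[p\nabla u\cdot\mathbf{n}]$, which is in $L^2(\Gamma)$ by the definition of $\mathbb{X}^p$. The duality pairing can then be written as an $L^2(\Gamma)$ integral against the trace of $u$, using that $u\in H^1$ has a single-valued trace in $H^{1/2}\subset L^2$ on each interface.

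The main obstacle is justifying this step: the individual one-sided traces are only in $H^{-1/2}$, while their sum is in $L^2$. I would handle it by a density/duality argument, noting that the sum of the pairings equals the global distributional identity $\int_\Omega p\nabla u\cdot\nabla\phi=-\sum_i\int_{\Omega_i}\text{div}(p\nabla u)\phi+\langle[\,\cdot\,],\phi\rangle_\Gamma$, first for smooth $\phi$ and then extended to $\phi=u$ by continuity, since the jump lies in $L^2(\Gamma)$. We then obtain
\begin{equation*}
\min(p)\|\nabla u\|^2_{L^2}\le \|\text{div}(p\nabla u)\|_{L^2(\Omega\setminus\Gamma)}\|u\|_{L^2(\Omega)}+\|[p\nabla u\cdot\mathbf{n}]\|_{L^2(\Gamma)}\|u\|_{L^2(\Gamma)}.
\end{equation*}

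Next, the Poincaré inequality gives $\|u\|_{L^2(\Omega)}\le C_P\|\nabla u\|_{L^2}$, and a trace inequality applied on each subdomain adjacent to an interface (Lipschitz polygons) gives $\|u\|_{L^2(\Gamma)}\le C_T\|u\|_{H^1_0}$, with $C_T$ depending only on the geometry of $\Gamma$. Dividing by $\|\nabla u\|$ yields
\begin{equation*}
\min(p)\|u\|_{H^1_0}\lesssim C_P\|\text{div}(p\nabla u)\|+C_T\|[p\nabla u\cdot\mathbf{n}]\|.
\end{equation*}
Finally, write $C_T\|[\cdot]\|=(C_T\Theta^{-1/2})\,\Theta^{1/2}\|[\cdot]\|$ and apply Cauchy–Schwarz in $\mathbb{R}^2$, $a+b\le\sqrt{2}\sqrt{a^2+b^2}$. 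This gives the claim with $C=\sqrt2\max(C_P,C_T\Theta^{-1/2})$, which depends only on $\Theta$, $\Omega$ and $\Gamma$; here $C_P$ depends on $\Omega$ as well as on $\Gamma$. If $\|\cdot\|_{H^1_0}$ denotes the full $H^1$ norm, one more Poincaré factor is absorbed into $C$.
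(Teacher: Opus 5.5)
Your proposal is correct and is essentially the paper's proof: the paper writes $\sqrt{p_{\min}}\|u\|_{H^1_0}\le\|u\|_{H^1_p}=\sup_{v}\|v\|_{H^1_p}^{-1}\int_\Omega p\nabla u\cdot\nabla v\,dx$, integrates by parts, and applies the same Poincar\'e and trace bounds. This is equivalent to your testing with $v=u$ (the supremum is attained there) and yields the identical constant $\sqrt{2}\max(C_P,C_{tr}/\sqrt{\Theta})/\min(p)$; your density argument for the interface pairing also justifies a step the paper only asserts (``permitted as $u\in\mathbb{X}^p$''), and you correctly read the $L^2(\Omega)$ in the jump term as $L^2(\Gamma)$.
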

% \begin{theorem}\label{2_Theorem_1}
% For any $u \in \mathbb{X}_p$, there exists a constant $C > 0$, \blue{independent of $u$ and $p$} and $\Omega$, such that
% \begin{equation}
%     \|u\|_{H^1_0(\Omega)} \leq C \left( \sum_{i} \|  \operatorname{div}(p\nabla u)\|_{L^2(\Omega_i)}^2 + \Theta \|[p\nabla u \cdot \mathbf{n}]\|_{L^2(\Gamma)} \right),
% \end{equation}
% where $\Theta > 0$ is a fixed positive scaling parameter.
% \end{theorem}
\noindent The proof of Theorem \ref{2_Theorem_1} is provided in Appendix \ref{Proof of H-Control by Residuals}. A significant consequence of this result is that $\mathbb{X}^p$ can be equipped with the equivalent norm 
\begin{equation}\label{eq2_Xp_norm}
    \|u\|_{\mathbb{X}^p} = \sqrt{ \|\text{div}(p\nabla u)\|_{L^2(\Omega)}^2 + \Theta\|[p\nabla u\cdot \mathbf{n}]\|_{L^2(\Gamma)}^2},
\end{equation}
% \begin{equation}\label{eq2_Xp_norm}
%     \|u\|_{\mathbb{X}^p}^2 = \sum\limits_{i}\|\text{div}(p\nabla u)\|_{L^2(\Omega_i)}^2 + \Theta\|[p\nabla u\cdot \mathbf{n}]\|_{L^2(\Gamma)}^2,
% \end{equation}
which satisfies $\|u\|_{H^1_0}\leq \frac{C}{\min(p)} \|u\|_{\mathbb{X}^p}$. This norm effectively measures the residuals of the strong form \eqref{eq2_strong}. 
Specifically, we derive the following error estimate for the transmission problem
\begin{equation}\label{eq2_upper bound}
\begin{split}
\|u-\mathfrak{u}^p\|_{H^1_0}^2\lesssim \|u-\mathfrak{u}^p\|_{\mathbb{X}^p}^2 =& \|\text{div}(p\nabla u)-\text{div}(p\nabla \mathfrak{u}^p)\|_{L^2(\Omega)}^2 + \Theta \|[p\nabla u\cdot \mathbf{n}]-[p\nabla \mathfrak{u}^p\cdot \mathbf{n}]\|_{L^2(\Gamma)}^2\\
= & \|\text{div}(p\nabla u)-\ell ^p\|_{L^2(\Omega)}^2 + \Theta\|[p\nabla u\cdot \mathbf{n}]\|_{L^2(\Gamma)}^2
\end{split}
\end{equation}
The inequality \eqref{eq2_upper bound} establishes that the norm $\|u-\mathfrak{u}^p\|_{\mathbb{X}^p}^2$ serves as an upper bound for the $H^1_0$-error. In particular, the evaluation of this norm requires only knowledge of the differential operator and the given data. Consequently, $\|u-\mathfrak{u}^p\|_{\mathbb{X}^p}^2$ can be utilized as a loss function to identify solutions to the underlying PDE.

%%%%%%%%%%%%%%%%%%%%%%%%%%%%%%%%%%%%%%%%%%%%%%%%%%%%
%%%%%%%%%%%%%%%%%%%%%%%%%%%%%%%%%%%%%%%%%%%%%%%%%%%%
%%%%%%%%%%%%%%%%%%%%%%%%%%%%%%%%%%%%%%%%%%%%%%%%%%%%
%%%%%%%%%%%%%%%%%%%%%%%%%%%%%%%%%%%%%%%%%%%%%%%%%%%%
%%%%%%%%%%%%%%%%%%%%%%%%%%%%%%%%%%%%%%%%%%%%%%%%%%%%
%%%%%%%%%%%%%%%%%%%%%%%%%%%%%%%%%%%%%%%%%%%%%%%%%%%%

\subsection{Decomposition of the Solution}\label{Decomposition of the solution}
% We represent the solution to \eqref{eq2_strong} as the sum of two components: a singular component and a principal component. The principal component consists of a smooth part and a gradient jump part. This superposition allows us to capture different physical aspects of the solution independently by sequentially isolating the regularity-limiting behaviors.
% Specifically, for a given parameter $p$, the solution $\mathfrak{u}^p$ admits the representation
% \begin{equation}\label{eq2_exact solution}
% \mathfrak{u}^p= \underbrace{\underbrace{\mathfrak{w}^p}_{\substack{\text{Smooth}\\ \text{part}}} +  \underbrace{\mathfrak{v}^p}_{\substack{\text{Gradient jump} \\ \text{ part}}} }_{\text{Principal component}}+ \underbrace{\mathfrak{s}^p}_{\substack{\text{Singular}\\ \text{ component}}}.    
% \end{equation}
% The spatial support of each component is visualized in Figure \ref{fig_solution_decomposed_1D_2D}. 

In the following, we decompose the exact solution \eqref{eq2_strong} into two components: (a) the principal part, which contains the smooth part of the solution inside the domain and jumps of the gradients at the interfaces, and (b) the singular part, which is associated to singular vertices that appear in two-dimensional transmission problems. We define these as the principal part and the singular part, respectively. The objective of this decomposition is to characterize the underlying structure of the solution, enabling us to encode these analytical properties directly into the NN architecture. This targeted approach significantly enhances the model's capabilities, particularly in regions with low regularity. We formalize these results in the following theorem.

\begin{theorem}\label{thm_decomposition}
Let $\mathfrak{u}^p$ be the solution to the transmission problem \eqref{eq2_strong}.
\begin{itemize}
    \item \textbf{One-dimensional problems:} We may decompose the solution as follows
\begin{equation}\label{eq2_exact solution_1D}
\mathfrak{u}^p= \underbrace{\underbrace{\mathfrak{w}^p}_{\substack{\text{Smooth}\\ \text{part}}} +  \underbrace{\mathfrak{v}^p}_{\substack{\text{Gradient jump} \\ \text{ part}}} }_{\text{Principal component}},    
\end{equation}
where $\mathfrak{w}^p\in H^2(0,1)\cap H^1_0(0,1)$ and $\mathfrak{v}^p\in H^2(0,1\setminus \Gamma)\cap H^1_0(0,1)$ is harmonic in $\Omega \setminus\Gamma$. Furthermore, $\mathfrak{w}^p$ and $\mathfrak{v}^p$ satisfy the following coupled interface condition on $\Gamma$ 
\begin{equation}
    [p\frac{d}{dx}\mathfrak{v}^p\cdot\mathbf{n}]=-[p]\frac{d}{dx}\mathfrak{w}^p\cdot\mathbf{n}.
\end{equation}
\item \textbf{Two-dimensional problems:} There exists a finite-dimensional space $\mathbb{U}_3^p\subset H^1_0(\Omega)\setminus H^2(\Omega\setminus\Gamma)$ depending only on $p$, such that $\text{div}(p\nabla \mathfrak{s})\in L^2(\Omega)$ for all $s\in \mathbb{U}^p_3$. Then, the solution admits the following decomposition
\begin{equation}\label{eq2_exact solution_2D}
\mathfrak{u}^p= \underbrace{\underbrace{\mathfrak{w}^p}_{\substack{\text{Smooth}\\ \text{part}}} +  \underbrace{\mathfrak{v}^p}_{\substack{\text{Gradient jump} \\ \text{ part}}} }_{\text{Principal component}}+ \underbrace{\mathfrak{s}^p}_{\substack{\text{Singular}\\ \text{ component}}},    
\end{equation}
where $\mathfrak{w}^p\in H^2(\Omega)\cap H^1_0(\Omega)$, $\mathfrak{v}^p\in H^2(\Omega\setminus \Gamma)\cap H^1_0(\Omega)$ is harmonic in $\Omega \setminus\Gamma$, and $\mathfrak{s}^p\in\mathbb{U}^p_3$. Furthermore, $\mathfrak{w}^p$ and $\mathfrak{v}^p$ satisfy the following coupled interface condition on $\Gamma$
\begin{equation}
   [p\nabla \mathfrak{v}^p\cdot \mathbf{n}]=-[p]\mathfrak{w}^p\cdot \mathbf{n}. 
\end{equation}
\end{itemize}

\end{theorem}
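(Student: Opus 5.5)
The plan is to prove both cases by the same three-step procedure: first fix a smooth part $\mathfrak{w}^p$, then define $\mathfrak{v}^p$ as whatever is left over and check that it is harmonic away from $\Gamma$, and finally read off the interface condition from flux continuity. The extra ingredient in two dimensions is Kondrat'ev/Grisvard corner-singularity theory \cite{grisvard2011elliptic}, which isolates the singular part first.

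\textbf{One-dimensional case.} On $\Omega=(0,1)$, take $\mathfrak{w}^p\in H^2(0,1)\cap H^1_0(0,1)$ to be the solution of $-\frac{d}{dx}\bigl(p_{\min}\frac{d}{dx}\mathfrak{w}\bigr)=\ell^p$, or more conveniently the solution of $-\bigl(\frac{d}{dx}\bigr)^2\mathfrak{w}^p=\ell^p/p$ piecewise. Since $p$ is piecewise constant, a more natural choice is $\mathfrak{w}^p$ with $-\mathfrak{w}^{p\,\prime\prime}=\ell^p/p\in L^2(0,1)$ and homogeneous Dirichlet data. This $\mathfrak{w}^p$ lies in $H^2(0,1)\cap H^1_0(0,1)$ by direct integration.

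Set $\mathfrak{v}^p:=\mathfrak{u}^p-\mathfrak{w}^p$. On each $\Omega_i$ we have $-p_i\mathfrak{u}^{p\,\prime\prime}=\ell^p$, and therefore $\mathfrak{v}^{p\,\prime\prime}=0$. Hence $\mathfrak{v}^p$ is piecewise affine, so it is harmonic in $\Omega\setminus\Gamma$, lies in $H^2((0,1)\setminus\Gamma)$, and is continuous with zero boundary values.

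For the interface condition, flux continuity $[p\,\mathfrak{u}^{p\,\prime}\cdot\mathbf n]=0$ gives
\[
[p\,\mathfrak{v}^{p\,\prime}\cdot\mathbf n]=-[p\,\mathfrak{w}^{p\,\prime}\cdot\mathbf n]=-[p]\,\mathfrak{w}^{p\,\prime}\cdot\mathbf n .
\]
The last equality uses that $\mathfrak{w}^{p\,\prime}$ is continuous, since $\mathfrak{w}^p\in H^2$ embeds in $C^1$ in one dimension.

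\textbf{Two-dimensional case.}
\begin{enumerate}
\item At each interior vertex $x_k$ where materials meet, separate variables in local polar coordinates. This leads to the Sturm--Liouville eigenproblem
\[
-(p\,\phi')'=\lambda^2 p\,\phi \quad\text{on the circle}, \qquad p\,\phi' \text{ continuous at the interface angles}.
\]
Pick the eigenpairs with $0<\lambda<1$; these are finitely many, because the spectrum is discrete. Define
\[
\mathbb U_3^p=\operatorname{span}\{\chi_k\, r^{\lambda}\phi(\theta)\},
\]
where $\chi_k$ are smooth cutoffs. These functions are in $H^1_0$ but not in $H^2$ of the subdomains, since $\lambda<1$. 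They satisfy $\operatorname{div}(p\nabla\cdot)=0$ near the vertex, and their normal flux is continuous, so $\operatorname{div}(p\nabla s)$ reduces to cutoff terms supported away from the vertex and lies in $L^2$.
\item Invoke the singular-function decomposition theorem for transmission problems on polygonal partitions \cite{grisvard2011elliptic}. It gives $\mathfrak{u}^p=\mathfrak{s}^p+\mathfrak{u}_R$ with $\mathfrak{s}^p\in\mathbb U_3^p$ and $\mathfrak{u}_R$ piecewise $H^2$ with continuous normal flux. This step relies on the fact that $\ell^p\in L^2$ and on excluding the exceptional case $\lambda=1$, or handling it with log terms.
\item Split the regular part. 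Let $\mathfrak{w}^p\in H^2(\Omega)\cap H^1_0(\Omega)$ solve $-\Delta\mathfrak{w}^p=f$, where $f$ is the $L^2$ function given piecewise by $\operatorname{div}(p\nabla\mathfrak{u}_R)/p_i$ on $\Omega_i$ (i.e. $-\Delta\mathfrak{u}_R$ there). $H^2$ regularity holds because $\Omega$ is a rectangle, hence convex.
\item Then $\mathfrak{v}^p=\mathfrak{u}_R-\mathfrak{w}^p$ is harmonic in each $\Omega_i$ and lies in $H^2(\Omega\setminus\Gamma)\cap H^1_0(\Omega)$.
\item The jump condition follows exactly as in 1D: the trace $\nabla\mathfrak{w}^p\cdot\mathbf n$ is single-valued on $\Gamma$, so
\[
[p\nabla\mathfrak{v}^p\cdot\mathbf n]=-[p]\,\nabla\mathfrak{w}^p\cdot\mathbf n .
\]
The statement's $[p]\mathfrak{w}^p\cdot\mathbf n$ should be read with $\nabla$.
\end{enumerate}

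\textbf{Main obstacle.} The hardest step is step 2, the regularity splitting near cross-points. It requires the piecewise-$H^2$ regularity of the remainder, which depends on the absence of eigenvalues $\lambda=1$, or on extra care near the outer boundary corners. I would cite Grisvard's and Kellogg's results rather than reprove them, noting that the corners of the rectangle are themselves harmless by convexity.
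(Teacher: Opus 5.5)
Your proposal is correct and follows the paper's proof essentially step for step. In 1D you let $\mathfrak{w}^p$ solve the Dirichlet Poisson problem with source $\ell^p/p$ and set $\mathfrak{v}^p=\mathfrak{u}^p-\mathfrak{w}^p$. In 2D you first remove the Kellogg/Grisvard singular part, then solve a Poisson problem on the convex rectangle with the piecewise $L^2$ source $-\Delta(\mathfrak{u}^p-\mathfrak{s}^p)$, and get the jump condition from flux continuity of $\mathfrak{u}^p$ and $\mathfrak{s}^p$ plus single-valuedness of $\nabla\mathfrak{w}^p\cdot\mathbf{n}$. Your reading of the statement's $[p]\mathfrak{w}^p\cdot\mathbf{n}$ as $[p]\nabla\mathfrak{w}^p\cdot\mathbf{n}$ is right.

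Two small slips to clean up. First, discard your initial candidate $-(p_{\min}\mathfrak{w}')'=\ell^p$, since it would not make $\mathfrak{v}^p$ harmonic. Second, in step 3, $\operatorname{div}(p\nabla\mathfrak{u}_R)/p_i=+\Delta\mathfrak{u}_R$ on $\Omega_i$, so the source must be $f=-\operatorname{div}(p\nabla\mathfrak{u}_R)/p_i$ for $\mathfrak{u}_R-\mathfrak{w}^p$ to be harmonic.
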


\noindent The proof of Theorem \ref{thm_decomposition} is provided in Appendix \ref{Solution Decomposition Proof}.  
The physical interpretation of this decomposition is as follows:
\begin{itemize}
\item $\mathfrak{w}^p$ represents the bulk behavior driven by the source term.
\item $\mathfrak{v}^p$ is harmonic away from the interfaces and satisfies a non-homogeneous Neumann-type condition on $\Gamma$. This component captures the discontinuities in the gradient across interfaces.
\item $\mathfrak{s}^p$ captures point singularities that impede the global $H^2(\Omega\setminus\Gamma)$ regularity of the exact solution.
\end{itemize}

\noindent The active regions of the solution decomposition component are visualized in Figure \ref{fig_solution_decomposed_1D_2D}.

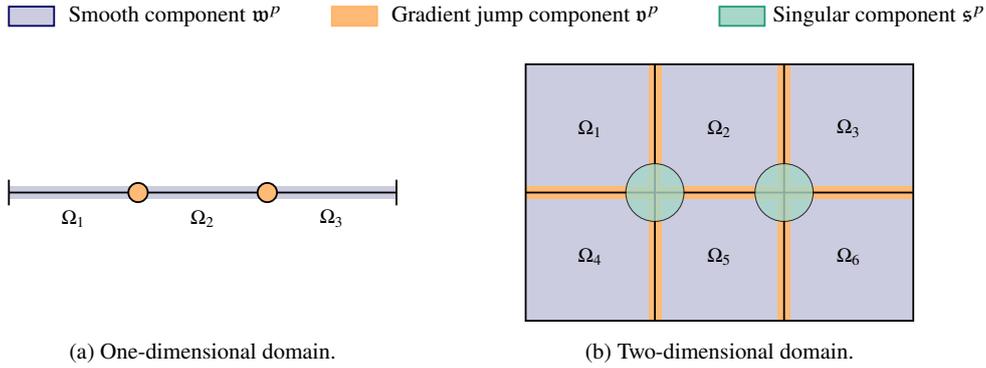
\begin{figure}[htbp]
    \centering
        \centering
        \raisebox{1cm}{
            \scalebox{0.85}{% \definecolor{My_blue}{RGB}{0,0,102}
% \definecolor{My_orange}{RGB}{255, 178, 102}

% \begin{tikzpicture}[scale=1.]
    
%     % --- 2. Subdomains (Smooth Component w^p) ---
%     % Shading the interior to show w^p is active throughout
%     \fill[My_blue!20] (-2,-0.1) rectangle (4,0.1);
    
%     % --- 3. The 1D Domain Line ---
%     \draw[thick] (-2,0) -- (4,0);

%     % --- 4. Interface Points (Jump Component v^p) ---
%     % In 1D, interfaces are points. We circle them as requested.
%     \draw[black, fill=My_orange!50, thick] (0,0) circle (0.15);
%     \draw[black, fill=My_orange!50, thick] (2,0) circle (0.15);
    
%     % Boundary markers
%     \draw[thick] (-2,-0.2) -- (-2,0.2);
%     \draw[thick] (4,-0.2) -- (4,0.2);

%     % --- 5. Labels ---
%     \node at (-1, -0.3) {\small $\Omega_1$};
%     \node at (1, -0.3) {\small $\Omega_2$};
%     \node at (3, -0.3) {\small $\Omega_3$};
    
%     \node[anchor=north] at (0,-0.2) {$x_1$};
%     \node[anchor=north] at (2,-0.2) {$x_2$};

%     % --- 6. Legend ---
%     \begin{scope}[shift={(0,-2)}]
%         \fill[My_blue!20, draw=My_blue, thick] (-1,0) rectangle (-0.3,0.3) node[right, black, shift={(0.2,-0.15)}] { Active range for the smooth component $\mathfrak{w}^p$};
%         \fill[My_orange!40, draw=My_orange, thick] (-1,-0.5) rectangle (-0.3,-0.2) node[right, black, shift={(0.2,-0.15)}] { Active range for the smooth component $\mathfrak{v}^p$};
%     \end{scope}

% \end{tikzpicture}

\definecolor{My_blue}{RGB}{0,0,102}
\definecolor{My_orange}{RGB}{255, 178, 102}
\definecolor{My_green}{RGB}{33, 155, 118}

\begin{tikzpicture}[scale=1.]

    % =========================================================
    % LEFT SIDE: 1D DOMAIN
    % =========================================================
    \begin{scope}[shift={(-6,0)}]
        % Subdomains (Smooth Component w^p)
        \fill[My_blue!20] (-2,-0.1) rectangle (4,0.1);
        
        % The 1D Domain Line
        \draw[thick] (-2,0) -- (4,0);

        % Interface Points (Jump Component v^p) - Point-wise in 1D
        \draw[black, fill=My_orange!90, thick] (0,0) circle (0.15);
        \draw[black, fill=My_orange!90, thick] (2,0) circle (0.15);
        
        % Boundary markers
        \draw[thick] (-2,-0.2) -- (-2,0.2);
        \draw[thick] (4,-0.2) -- (4,0.2);

        % Labels
        \node at (-1, -0.4) {\small $\Omega_1$};
        \node at (1, -0.4) {\small $\Omega_2$};
        \node at (3, -0.4) {\small $\Omega_3$};
        
        % \node[anchor=north] at (0,-0.2) {$x_1$};
        % \node[anchor=north] at (2,-0.2) {$x_2$};
        
        \node at (1, -2.5) {{(a) One-dimensional domain.}};
    \end{scope}

    % =========================================================
    % RIGHT SIDE: 2D DOMAIN
    % =========================================================
    \begin{scope}[shift={(2,0)}]
        % Subdomains (Smooth Component w^p)
        \fill[My_blue!20] (-2,-2) rectangle (4,2);
        
        % Interface Boxes (Gradient Jump Component v^p)
        \foreach \x in {0,2}
            \fill[My_orange!90] (\x-0.1, -2) rectangle (\x+0.1, 2);
        \fill[My_orange!90] (-2, -0.1) rectangle (4, 0.1);

        % Domain Lines
        \draw[thick] (-2,-2) rectangle (4,2);
        \draw[thick] (0,-2) -- (0,2);
        \draw[thick] (2,-2) -- (2,2);
        \draw[thick] (-2,0) -- (4,0);

        % Domain Labels
        \node at (-1, 1) {\small $\Omega_1$}; \node at (1, 1) {\small $\Omega_2$}; \node at (3, 1) {\small $\Omega_3$};
        \node at (-1, -1) {\small $\Omega_4$}; \node at (1, -1) {\small $\Omega_5$}; \node at (3, -1) {\small $\Omega_6$};

        % Singular Components (s_ij around vertices)
        \def\singularityradius{0.45}
        \begin{scope}
            \clip (-2,-2) rectangle (4,2);
            \draw[ fill=My_green!40, opacity=0.8] (0,0) circle (\singularityradius);
            \draw[ fill=My_green!40, opacity=0.8] (2,0) circle (\singularityradius);
        \end{scope}

        % \filldraw (0, 0) circle (2pt) node[anchor=north east] {$x_1$};
        % \filldraw (2, 0) circle (2pt) node[anchor=north east] {$x_2$};
        
        \node at (1, -2.5) {{(b) Two-dimensional domain.}};
    \end{scope}

    % =========================================================
    % SHARED LEGEND (Bottom Centered)
    % =========================================================
\begin{scope}[shift={(-8,-4)}]
    % Smooth Component
    \fill[My_blue!20, draw=My_blue, thick] (0,6.6) rectangle (0.7,6.9);
    \node[right, black] at (0.8,6.75) {Smooth component $\mathfrak{w}^p$};
    
    % Gradient Jump Component
    \fill[My_orange!90, draw=My_orange, thick] (5,6.6) rectangle (5.7,6.9);
    \node[right, black] at (5.8,6.75) {Gradient jump component $\mathfrak{v}^p$};
        
    % Singular Component
    \fill[My_green!40, draw=My_green, thick] (11,6.6) rectangle (11.7,6.9);
    \node[right, black] at (11.7,6.75) {Singular component $\mathfrak{s}
    ^p$};
\end{scope}

\end{tikzpicture}}
        }
    \caption{Active regions of the decomposed solution components: the smooth part $\mathfrak{w}^p$ is active over the entire domain $\Omega$ except on the material interfaces, while the gradient jump part $\mathfrak{v}^p$ and the singular component $\mathfrak{s}^p$ are localized to the material interfaces $\Gamma$ and the singular vertices, respectively.}
    \label{fig_solution_decomposed_1D_2D}
\end{figure}

\noindent Following the framework established by \cite{kellogg1971singularities}, we provide a detailed characterization of the singular functions belonging to $\mathbb{U}^p_3$. They may be described by basis functions $\mathfrak{s}^p_{ij}$ for $j=1,...,K_i$ and $i=1,\ldots, N_s$ indexing each singular vertex. Each function admits a local polar representation of the form
\begin{equation}\label{eq2_sij definition}
\mathfrak{s}^p_{ij}(r, \theta)=r^{\Lambda^p_{ij}}\vartheta^p_{ij}(\theta)\eta (r), \qquad j = 1, \ldots, K_i,
\end{equation}
where $r$ and $\theta$ denote the polar coordinates centered at the vertex $x_i$. Furthermore, $\vartheta^p_{ij}$ is an eigenfunction of the associated Sturm–Liouville problem with corresponding eigenvalue $\Lambda^p_{ij} \in (0, 1)$. That is, $\vartheta^p_{ij}$ and $\Lambda^p_{ij}$ satisfy the following weak formulation
\begin{equation}\label{eq2_SL}
\int_0 ^{2\pi} p(\theta)\Big((\vartheta^p _{ij}(\theta))'v'(\theta) + (\Lambda^p _{ij})^2 \vartheta^p _{ij} (\theta) v(\theta)\big) \, d \theta = 0, \qquad \forall v \in H^1_{\text{per}}(0, 2\pi),
\end{equation}
where $H^1_{\text{per}}(0, 2\pi)$ denotes the Sobolev space of $H^1$ functions on $(0, 2\pi)$ satisfying periodic boundary conditions. In \eqref{eq2_SL}, the parameter $p(\theta)$ denotes the localized angular trace of the coefficient $p$ surrounding the vertex. To ensure local support, we introduce a smooth radial cutoff function $\eta: \mathbb{R}^d \to \mathbb{R}$ defined in polar coordinates as follows
\begin{equation}\label{eq2_eta}
    \eta(r) = \begin{cases} 
    1, & \text{if } r < \delta_1, \\ 
    0, & \text{if } r > \delta_2, 
    \end{cases}
\end{equation}
where $0 < \delta_1 < \delta_2$, with $\delta_1$ and $\delta_2$ such that the support of $\eta$ is strictly contained within the subdomains incident to $x_i$, avoiding overlap with other singular points.

\noindent Crucially, while the singular functions $\mathfrak{s}_{ij}^p$ are harmonic near the vertex ($r < \delta_1$), the use of the cutoff function $\eta(r)$—which we need to keep the effect localized—makes $\mathfrak{s}^p$ non-harmonic in the transition zone $\delta_1 < r < \delta_2$. Consequently, removing the singular part from the total solution $\mathfrak{u}^p$ pushes a new residual into the right-hand side of the equation. This creates a modified source term that affects the smooth and gradient components of the solution. Finally, because these singular functions are constructed with zero flux jump ($[ p \nabla \mathfrak{s}^p_{ij} \cdot \mathbf{n} ] = 0$), they do not interfere with the physical continuity requirements across the interfaces $\Gamma$.

\noindent In our subsequent numerical simulations, we emulate this decomposition by constructing a discretized trial space composed of: smooth functions $\mathbb{U}_1\subset H^2(\Omega)\cap H^1_0(\Omega)$, functions $\mathbb{U}_2\subset H^2(\Omega\setminus\Gamma)\cap H^1_0(\Omega)$ that admit gradient discontinuities across interfaces, and a discretization of $\mathbb{U}^p_3$. The resulting trial space is defined via the direct sum
\begin{equation}\label{eq2_space_decomposition}
    \mathbb{U}^p = \mathbb{U}_1+\mathbb{U}_2+\mathbb{U}^p_3. 
\end{equation}
\noindent Note that $\mathbb{U}^p \subset \mathbb{X}^p$, ensuring that any candidate solution in our constructed space satisfies the global boundary conditions and possesses the necessary local regularity.

\subsection{Continuum loss function}\label{Continuum loss function}
For each $p \in \mathbb{P}$, and a reduced-order subspace $U(p) \subset \mathbb{U}^p$, the corresponding non-parametric loss function is defined by
\begin{equation} \label{eq2_loss-nonparametric}
\mathcal{L}(U(p); p) = \min_{u \in U(p)} \|u-\mathfrak{u}^p\|_{\mathbb{X}^p}^2 
%= \min_{u \in U^p}  \Big(\sum\limits_{i}\|\text{div}(p\nabla u)-\ell ^p\|_{L^2(\Omega_i)}^2 + \|[p\nabla u\cdot \mathbf{n}]\|_{L^2(\Gamma)}^2\Big).
\end{equation}
This formulation simultaneously minimizes the residuals of the governing equation within each subdomain and the flux mismatches across the interfaces. To ensure the reduced subspace is robust across the parameter manifold, we extend the formulation \eqref{eq2_loss-nonparametric} to the entire parameter space $\mathbb{P}$. The global loss function, $\mathcal{L}_\mu(U)$, is defined as the expected approximation norm with respect to the parameter distribution $\mu(p)$
\begin{equation}\label{eq2_continouse loss}
\mathcal{L}_\mu(U) = \int_{\mathbb{P}} \mathcal{L}(U(p); p) \, d\mu(p) 
\end{equation}
The minimization of \eqref{eq2_continouse loss} with respect to $U(p)$ corresponds to finding an optimal, reduced-order space $U(p)$ spanned by a set of functions that are representative of the solution manifold across $\mathbb{P}$. Rather than seeking a single good solution for a specific $p$, this approach identifies low-dimensional, parameter-dependent subspaces over which a good approximation can be found by a low-dimensional least-squares problem.

%%%%%%%%%%%%%%%%%%%%%%%%%%%%%%%%%%%%%%%%%%%%%%%%%%%%
%%%%%%%%%%%%%%%%%%%%%%%%%%%%%%%%%%%%%%%%%%%%%%%%%%%%
%%%%%%%%%%%%%%%%%%%%%%%%%%%%%%%%%%%%%%%%%%%%%%%%%%%%
%%%%%%%%%%%%%%%%%%%%%%%%%%%%%%%%%%%%%%%%%%%%%%%%%%%%
%%%%%%%%%%%%%%%%%%%%%%%%%%%%%%%%%%%%%%%%%%%%%%%%%%%%
%%%%%%%%%%%%%%%%%%%%%%%%%%%%%%%%%%%%%%%%%%%%%%%%%%%%

\section{LS-ReCoNN Framework}\label{LS-ReCoNN Framework}
\noindent In this section, we integrate the FE method, the ReCoNN framework \cite{taylor2024regularity}, and the LS-Net method \cite{baharlouei2024least} to develop an NN-based approach capable of accurately approximating parametric solutions within each subdomain while capturing interactions across interfaces, including complex behavior near singular points. Before delving into the details, we first provide an overview of the LS-ReCoNN strategy. The method represents the solution as a linear combination of \emph{parameter-independent} and \emph{parameter-dependent} functions. The parameter-independent functions are parameterized by NNs. They are designed to capture both smooth behavior across the entire domain and low-regularity features near the interfaces. As a result, they provide a discretization of the solution space associated with the principal part. In contrast, parameter-dependent basis functions are derived from the solution of Sturm–Liouville eigenvalue problems using the FEM to represent the singular components of the solution. The coefficients of these functions are determined by minimizing a quadratic loss, resulting in an LS system.

\noindent The following subsection describes the spatial discretization, which is designed to match the decomposition in equation \eqref{eq2_space_decomposition}. This includes the use of cutoff functions and interfacial features. We then present the formal structure of the LS-ReCoNN approximate solution, followed by the formulation and discretization of its loss function.

\subsection{Solution Space Discretization}
Following our theoretical framework, we discretize the solution space into three distinct subspaces. The first two subspaces, $U_1^\alpha$ and $U_2^\alpha$, which handle the principal components of the solution, are generated by a single NN with trainable parameters $\alpha$ that produce a set of functions that, by definition, span $U_1^\alpha$ and $U_2^\alpha$. This NN is independent of the material parameters. In contrast, the third space, $U_3^p$, specifically designed to capture singularities, is $p$-dependent and is constructed using an FE eigenvalue solver. This gives the discretization:
\begin{equation}
U^\alpha(p) = U_1^\alpha + U_2^\alpha + U_3^p
\end{equation}
The details for building each of these components are provided in the following.
\subsubsection*{Regularity-Conforming Discretization of $\mathbb{U}_1$ and $\mathbb{U}_2$}
Recalling the solution decompositions in \eqref{eq2_exact solution_1D} and \eqref{eq2_exact solution_2D}, we employ a deep NN to learn the reduced-order basis functions for the smooth and gradient jump components. Let $U_1^\alpha \subset \mathbb{U}_1$ and $U_2^\alpha \subset \mathbb{U}_2$ denote finite-dimensional trial subspaces parameterized by the learnable network parameters $\alpha$. Crucially, these subspaces are independent of the parameter $p$, allowing the network to learn a universal ($p$-independent) set of functions that are later enriched by the $p$-dependent singular part. We utilize a single NN architecture that maps a spatial coordinate $x \in \Omega$ to an output vector of dimension $N = N_1 + N_2$, where $N_1$ and $N_2$ correspond to the prescribed dimensions of $U_1^\alpha$ and $U_2^\alpha$, respectively. Figure \eqref{fig2_NN_architecture_1D} and Appendix \ref{Neural Network Structure} provide further details on the network architecture. The network is formally defined as the mapping
\begin{equation}
x \mapsto \left( \bar{\mathbf{w}}^\alpha(x), \bar{\mathbf{v}}^\alpha(x) \right)^T \in \mathbb{R}^{N_1 + N_2},
\end{equation}
where $\bar{\mathbf{w}}^\alpha(x) = (\bar{w}^\alpha_1, \dots, \bar{w}^\alpha_{N_1})$ and $\bar{\mathbf{v}}^\alpha(x) = (\bar{v}^\alpha_1, \dots, \bar{v}^\alpha_{N_2})$.
To ensure that these outputs conform to the required functional regularity and boundary conditions, each component is transformed through specialized cutoff functions. Specifically, we incorporate three distinct cutoffs as follows:

\begin{itemize}
    \item \textbf{Boundary cutoff: } To enforce homogeneous Dirichlet boundary conditions, the network outputs are multiplied component-wise by a smooth cutoff function $\mathcal{B}(x)$  \cite{berrone2023enforcing}. Further details regarding $\mathcal{B}(x)$ are provided in Appendix \ref{Boundary Cutoff Function}.

    \item \textbf{Gradient jump cutoff:} To accommodate gradient discontinuities without triggering Gibbs-type oscillations, the jump component basis functions $\bar{\mathbf{v}}^\alpha$ are multiplied component-wise by a specialized cutoff vector $\Psi = (\psi_1, \psi_2, \ldots, \psi_{N_2})^T$. Following the approach in \cite{berrone2023enforcing}, each basis function $\bar{v}^\alpha_n$ is associated with a cutoff $\psi_n \in C(\Omega) \cap C^\infty(\Omega \setminus \Gamma_n)$, where $\Gamma_n \subset \Gamma$ denotes a specific subregion of the interfaces. These functions are designed to satisfy $\psi_n = 0$ on $\partial\Omega$ and exhibit a unit normal derivative jump across $\Gamma_n$, i.e.,
    \begin{equation}
    \lim_{y \to x^\pm} \nabla \psi_n(y) \cdot \mathbf{n}(x) = \pm 1, \quad \text{for } x \in \Gamma_n.
    \end{equation}
    By utilizing these properties, the one-sided traces of the normal derivatives for the resulting basis functions can be evaluated analytically
    \begin{equation}
    \begin{split}
    \lim\limits_{y\to x^\pm} \nabla(\psi_n(y)\bar{v}^\alpha_n(y))\cdot \mathbf{n}(y)=&\lim\limits_{y\to x^\pm} \bar{v}^\alpha_n(y)\nabla\psi_n(y)\cdot \mathbf{n}(y)+\psi_n(y)\nabla \bar{v}^\alpha_n(y)\cdot \mathbf{n}(y)\\
    =& \bar{v}^\alpha_n(x)\lim\limits_{y\to x^\pm} \nabla\psi_n(y)\cdot \mathbf{n}(y)+ \psi_n(x)\lim\limits_{y\to x^\pm}\nabla \bar{v}^\alpha_n(y)\cdot \mathbf{n}(y)\\
    =&\bar{v}^\alpha_n(x).
    \end{split}
    \end{equation}
    This construction allows the normal derivative jumps—which are essential for the $\mathbb{X}^p$-norm loss evaluation—to be represented and calculated directly from the underlying NN outputs $\bar{v}^\alpha_n$. By distributing these jump-enriched functions across segments of $\Gamma$, we ensure a sufficiently rich representation of the flux behavior and achieve partial explainability.  The particular construction and formula for the employed $\Psi$ are outlined in detail in Appendix \ref{Gradient jump cutoff}.
    
    \item \textbf{Singularity exclusion cutoff:} Since the singular functions $\mathfrak{s}_{ij}^p$ incorporate the function $\eta$ to resolve local junction behaviors, we introduce the vector-valued cutoff functions $\Phi_1$ and $\Phi_2$ to modify $\bar{\mathbf{w}}^\alpha$ and $\bar{\mathbf{v}}^\alpha$, respectively. These cutoffs ensure that the NN outputs vanish in the immediate vicinity of singular vertices. By masking the NN at these junctions, we effectively exclude the global trial functions from attempting to resolve the high-gradient kink characteristic of the solution. The specific geometric constructions and formulas for $\Phi_1$ and $\Phi_2$ are detailed in Appendix \ref{Singularity-Exclusion cutoff}.
\end{itemize}
Consequently, the final NN architecture is defined as follows
\begin{equation} \label{eq2_walpha-valpha}
\left(
\begin{array}{c}
\mathbf{w}^\alpha(x) \\
\mathbf{v}^\alpha(x) 
\end{array}
\right) = \mathcal{B}(x) \cdot 
\left(
\begin{array}{c}
\bar{\mathbf{w}}^\alpha(x) \odot \Phi_1(x) \\ \bar{\mathbf{v}}^\alpha(x) \odot \Psi(x) \odot \Phi_2(x) \end{array}\right), \end{equation}
where $\mathbf{w}^\alpha(x) = (w^\alpha_1, \dots, w^\alpha_{N_1})$ and $\mathbf{v}^\alpha(x) = (v^\alpha_1, \dots, v^\alpha_{N_2})$. Furthermore, the trial parameter-independent subspaces are defined as follows
\begin{equation}
U_1^\alpha = \text{span} \{ w^\alpha_n \}_{n=1}^{N_1}, \quad \text{and} \quad U_2^\alpha = \text{span} \{ v^\alpha_n \}_{n=1}^{N_2}.
\end{equation}

\begin{figure}[htbp]

	\begin{center}
	\definecolor{My_blue}{RGB}{0,0,102}
\definecolor{My_Orange}{RGB}{255, 178, 102}
\begin{tikzpicture}[
punkt/.style={
   rectangle,
   rounded corners,
   draw=black, thick,
   text width=2em,
   minimum height=1em,
   text centered}
]
%\node[punkt, minimum width = 24em, minimum height = 12em] (block) at (3.5, 1.5) {};
%\node[punkt, minimum width = 3.5em, minimum height = 12em] (block) at (8.4, 1.5) {};
%\draw (0,0.35) node{Input};

%\draw(9,0.35) node {Output};

% \Vertex[x=10,y=1.5,style={opacity=0.0,color=red!0},size=0.00000001]{inv1}
% \Vertex[x=10,y=0,style={opacity=0.0,color=white!100}]{inv2}

\Vertex[x=0,y=1.5,label=$ x$,color=white,size=1,fontsize=\normalsize]{X}
\draw (0,0.7) node {Input};

\Vertex[x=2,y=3.5,label=$\sigma (\cdot)$,color=white,size=0.85,fontsize=\scriptsize]{A11}
\Vertex[x=4,y=3.5,label=$\sigma (\cdot)$,color=white,size=0.85,fontsize=\scriptsize]{A21}
\Vertex[x=2,y=2.25,label=$\sigma (\cdot)$,color=white,size=0.85,fontsize=\scriptsize]{A12}
\Vertex[x=4,y=2.25,label=$\sigma (\cdot)$,color=white,size=0.85,fontsize=\scriptsize]{A22}
\Vertex[x=2,y=0,label=$\sigma (\cdot)$,color=white,size=0.85,fontsize=\scriptsize]{A13}
\Vertex[x=4,y=0,label=$\sigma (\cdot)$,color=white,size=0.85,fontsize=\scriptsize]{A23}

\Vertex[x=7,y=4.5,label=$\bar{w}_{1}^\alpha \hspace{-0.05cm}(x)$ ,color=white,size=0.85,fontsize=\scriptsize]{A31}
\Vertex[x=7,y=2.5,label=$\bar{w}_{N_1}^\alpha \hspace{-0.05cm}(x)$ ,color=white,size=0.85,fontsize=\scriptsize]{A32}
\Vertex[x=7,y=1.2,label=$\bar{v}_{1}^\alpha \hspace{-0.05cm}(x)$ ,color=white,size=0.85,fontsize=\scriptsize]{A33}
\Vertex[x=7,y=-0.8,label=$\bar{v}_{N_2}^\alpha \hspace{-0.05cm}(x)$ ,color=white,size=0.85,fontsize=\scriptsize]{A34}

\Vertex[x=9,y=4.5,label=$w_{1}^\alpha \hspace{-0.05cm}(x)$ ,color=red!20!white,size=0.85,fontsize=\scriptsize]{A41}
\Vertex[x=9,y=2.5,label=$w_{N_1}^\alpha \hspace{-0.05cm}(x)$ ,color=red!20!white,size=0.85,fontsize=\scriptsize]{A42}
\Vertex[x=9,y=1.2,label=$v_{1}^\alpha \hspace{-0.05cm}(x)$ ,color=red!20!white,size=0.85,fontsize=\scriptsize]{A43}
\Vertex[x=9,y=-0.8,label=$v_{N_2}^\alpha \hspace{-0.05cm}(x)$ ,color=red!20!white,size=0.85,fontsize=\scriptsize]{A44}

% \Vertex[x=9,y=3.5,label=$\bar{w}_{1}^\alpha \hspace{-0.05cm}(x)$ ,color=red!20!white,size=0.85,fontsize=\scriptsize]{A41}
% \Vertex[x=9,y=2.25,label=$\bar{w}_{2}^\alpha \hspace{-0.05cm}(x)$ ,color=red!20!white,size=0.85,fontsize=\scriptsize]{A42}
% \Vertex[x=9,y=0,label=$\bar{w}_{N}^\alpha \hspace{-0.05cm}(x)$ ,color=red!20!white,size=0.85,fontsize=\scriptsize]{A43}

\draw (2,4.2) node {$L_1$};
\draw (4,4.2) node {$L_2$};
\draw (7,5.5) node {$\bar{\mathbf{w}}^\alpha (x)$};
\draw (7,-1.8) node {$\bar{\mathbf{v}}^\alpha (x)$};
\draw (9,5.5) node {${\color{red}\mathbf{w}^\alpha (x)}$};
\draw (9,-1.8) node {${\color{red}\mathbf{v}^\alpha (x)}$};

\Vertex[x=5.5,y=3,style={color=white},label=$\color{black}\hdots$,size=0.75]{Ah1}
\Vertex[x=5.5,y=2,style={color=white},label=$\color{black}\hdots$,size=0.75]{Ah2}
\Vertex[x=5.5,y=0,style={color=white},label=$\color{black}\hdots$,size=0.75]{Ah3}

\Edge[color=black, Direct,lw=0.5pt](X)(A11)
\Edge[color=black, Direct,lw=0.5pt](X)(A12)
\Edge[color=black, Direct,lw=0.5pt](X)(A13)

\Edge[color=white,label={$\color{black}\vdots$}](A12)(A13)
\Edge[color=white,label={$\color{black}\vdots$}](A22)(A23)
\Edge[color=white,label={$\color{black}\vdots$}](Ah2)(Ah3)

\Edge[color=white,label={$\color{black}\vdots$}](A31)(A32)

\Edge[color=white,label={$\color{black}\vdots$}](A33)(A34)

\Edge[color=black, Direct,lw=0.5pt](A11)(A21)
\Edge[color=black, Direct,lw=0.5pt](A11)(A22)
\Edge[color=black, Direct,lw=0.5pt](A11)(A23)
\Edge[color=black, Direct,lw=0.5pt](A11)(A21)
\Edge[color=black, Direct,lw=0.5pt](A12)(A21)
\Edge[color=black, Direct,lw=0.5pt](A12)(A22)
\Edge[color=black, Direct,lw=0.5pt](A12)(A23)
\Edge[color=black, Direct,lw=0.5pt](A13)(A21)
\Edge[color=black, Direct,lw=0.5pt](A13)(A22)
\Edge[color=black, Direct,lw=0.5pt](A13)(A23)

\Edge[color=black, Direct,lw=0.5pt](A23)(Ah1)
\Edge[color=black, Direct,lw=0.5pt](A23)(Ah2)
\Edge[color=black, Direct,lw=0.5pt](A23)(Ah3)
\Edge[color=black, Direct,lw=0.5pt](A21)(Ah1)
\Edge[color=black, Direct,lw=0.5pt](A21)(Ah2)
\Edge[color=black, Direct,lw=0.5pt](A21)(Ah3)
\Edge[color=black, Direct,lw=0.5pt](A22)(Ah1)
\Edge[color=black, Direct,lw=0.5pt](A22)(Ah2)
\Edge[color=black, Direct,lw=0.5pt](A22)(Ah3)

\Edge[color=black, Direct,lw=0.5pt](Ah1)(A31)
\Edge[color=black, Direct,lw=0.5pt](Ah1)(A32)
\Edge[color=black, Direct,lw=0.5pt](Ah1)(A33)
\Edge[color=black, Direct,lw=0.5pt](Ah2)(A31)
\Edge[color=black, Direct,lw=0.5pt](Ah2)(A32)
\Edge[color=black, Direct,lw=0.5pt](Ah2)(A33)
\Edge[color=black, Direct,lw=0.5pt](Ah3)(A31)
\Edge[color=black, Direct,lw=0.5pt](Ah3)(A32)
\Edge[color=black, Direct,lw=0.5pt](Ah3)(A33)

\Edge[color=black, Direct,lw=0.5pt](A31)(A41)
\Edge[color=black, Direct,lw=0.5pt](A32)(A42)
\Edge[color=black, Direct,lw=0.5pt](A33)(A43)
\Edge[color=black, Direct,lw=0.5pt](A34)(A44)

\Edge[color=white,label={$\color{black}\vdots$}](A41)(A42)
\Edge[color=white,label={$\color{black}\vdots$}](A43)(A44)

\node[punkt, minimum width = 4em, minimum height = 8.9em] (block) at (7,3.5) {};

\node[punkt, minimum width = 4em, minimum height = 8.9em] (block) at (7,0.2) {};

\node[punkt, minimum width = 4em, minimum height = 8.9em] (block) at (9,3.5) {};

\node[punkt, minimum width = 4em, minimum height = 8.9em] (block) at (9,.2) {};

\draw (8,5.7) node{ $ \downarrow $};
% \draw (8,6.5) node{ $\Psi$,$\Phi_1$, and $\Phi_2$};
\draw (8,6.2) node{Applying cutoff functions};
\node[punkt, minimum width = 11em, minimum height = 2.em] (block) at (8,6.2) {};

\end{tikzpicture}
	\end{center}
	\caption{NN architecture for the parameter-independent trial functions. The network maps spatial coordinates to a vector of raw outputs that represent the smooth and gradient jump components of the solution.}
	\label{fig2_NN_architecture_1D}
\end{figure}
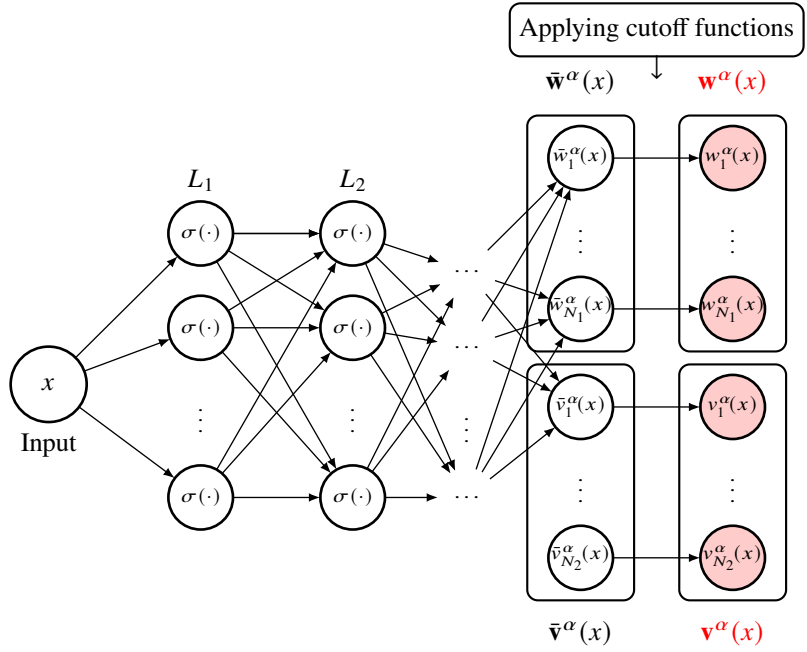

\subsubsection*{Discretization of $\mathbb{U}^p_3$}
To approximate the singular component defined in \eqref{eq2_exact solution_2D}, we introduce a finite-dimensional trial subspace $U_3^p \subset \mathbb{U}^p_3$ of dimension $N_3$. This subspace is spanned by the basis functions $\mathbf{s}^{p}(x) = ((s_{ij}^p)_{i=1}^{N_s})_{j=1}^{N_3}$, which are specifically designed to capture localized behavior near material junctions
\begin{equation}
U_3^p = \text{Span}\{ s_{ij}^p \mid 1 \le i \le N_s,\; 1 \le j \le N_3 \}.
\end{equation}
For each singular point $x_i$, the function $s_{ij}^p$ is expressed in local polar coordinates $(r, \theta)$ as
\begin{equation}\label{eq2_sjp}
s_{ij}^p(r, \theta) = r^{\lambda_i^p} \mu_i^p(\theta) \eta(r), \qquad j = 1, \ldots, N_3,
\end{equation}
where $\lambda_i^p$ and $\mu_i^p(\theta)$ denote the eigenvalues and angular eigenfunctions, respectively. These are determined by solving the associated Sturm–Liouville eigenvalue problem via an FE discretization. Crucially, these eigenfunctions conform to the material geometry; they are globally continuous and smooth everywhere except at the interfaces. Given a set of piecewise-polynomial trial functions $\{\varphi_j\}_{j=1}^{N_3}$, the corresponding stiffness and mass matrices, $G^p$ and $B^p$, are assembled as
\begin{equation}\label{eq2_FE_matrices}
G^p_{ij} = \int_0^{2\pi} p(\theta) \frac{d \varphi_i}{d\theta} \frac{d \varphi_j}{d\theta} d\theta, \qquad B^p_{ij} = \int_0^{2\pi} p(\theta) \varphi_i(\theta) \varphi_j(\theta) d\theta.
\end{equation}
The detailed construction of the basis functions $\{\varphi_j\}_{j=1}^{N_3}$ is provided in Appendix \eqref{Implementation of the FE Eigenvalue Solver}. This yields the following generalized eigenvalue problem
\begin{equation}\label{eq2_FE_eigenvalue_solver}
G^p \rho_i^p = \lambda_i^p B^p \rho_i^p ,
\end{equation}
where $\rho_i$ are the corresponding eigenvectors. The numerical procedure for solving the eigenvalue problem \eqref{eq2_FE_eigenvalue_solver} is detailed in Appendix \ref{Implementation of the FE Eigenvalue Solver}. We specifically select the subset of eigenvalues satisfying $0 < \lambda_i < 1$, as these correspond to the low-regularity singular behavior inherent in transmission problems. Finally, the angular eigenfunctions are reconstructed as a linear combination of the FE basis functions, i.e.,
\begin{equation}
\mu_i^p(\theta) = \sum_{j=1}^{N_3} \rho_{ij}^p \varphi_j, \qquad i = 1, \ldots, N_s.
\end{equation}

\noindent Since the singular functions $s_{ij}^p$ are computed via the FE method rather than determined analytically, they are technically non-conforming. Indeed, unlike the exact singular functions that are harmonic near the junctions (for $r < \delta_1$), any numerical approximation introduces a non-zero residual in this region. Because the Laplacian scales as $r^{\lambda-2}$, with $\lambda<1$, it fails to be in $L^2$, which theoretically causes the continuum $\mathbb{X}^p$-loss to become infinite. A detailed discussion on solving this issue is deferred to Subsection \ref{Loss Discretization}, where we demonstrate how an appropriate numerical approximation of $\|u - \mathfrak{u}^p\|_{\mathbb{X}^p}$ mitigates these errors and quantifies the propagation of errors into the $H^1_0$-error.

%%%%%%%%%%%%%%%%%%%%%%%%%%%%%%%%%%%%%%%%%%%%%
%%%%%%%%%%%%%%%%%%%%%%%%%%%%%%%%%%%%%%%%%%%%%
%%%%%%%%%%%%%%%%%%%%%%%%%%%%%%%%%%%%%%%%%%%%%
%%%%%%%%%%%%%%%%%%%%%%%%%%%%%%%%%%%%%%%%%%%%%

\subsection{LS-ReCoNN Approximate Solution}\label{LS-ReCoNN Approximate Solution}
\subsubsection*{One-Dimensional Problems}
As discussed in Subsection \ref{Decomposition of the solution}, one-dimensional problems exhibit no singularities, since the solution only develops discontinuities in its first derivative at material interfaces. This absence of singular components simplifies the structure of the solution and allows the LS-ReCoNN method to approximate it using the smooth and gradient jump components. The LS-ReCoNN solution is then given by
\begin{align}\label{eq2_LS-ReCoNN_solution_1D}
\begin{array}{rl}
     \mathfrak{u}^p(x) \approx u^{p, \alpha}(x) =& \hspace{-2mm}w^{p, \alpha}(x) + v^{p, \alpha}(x)\\
      := &\hspace{-2mm}\displaystyle \sum_{n = 1}^{N_1} a^{\alpha, p}_n w^{\alpha}_n(x) + \sum_{m = 1}^{N_2} b^{\alpha, p}_m v^{\alpha}_m(x)\\[3mm]
     = &\hspace{-2mm}\displaystyle \underbrace{\mathbf{a}^{p,\alpha} \cdot \mathbf{w}^{\alpha}(x)}_{\substack{\text{Smooth}\\ \text{ component}}} + \underbrace{\mathbf{b}^{p,\alpha} \cdot \mathbf{v}^{\alpha}(x)}_{\substack{\text{Gradient jump}\\ \text{ component}}}, \qquad \forall x \in \Omega \subset \mathbb{R},\\
\end{array}
\end{align}
where $u^{p, \alpha}$ is the LS-ReCoNN solution over the domain $\Omega$ and $\mathbf{w}^{\alpha}$ and $\mathbf{v}^{\alpha}$ are defined in \eqref{eq2_walpha-valpha}. The vectors $\mathbf{a}^{p,\alpha}:= (a_n^{p,\alpha})_{n = 1}^{N_1}$ and $\mathbf{b}^{p,\alpha}:= (b_n^{p,\alpha})_{m = 1}^{N_2}$ denote the coefficients of the optimal approximate solution within the approximation subspace $U_1^\alpha$ and $U_2^\alpha$. For any given parameter $p$, they can be determined by solving an LS optimization problem. The construction of the LS system is detailed in Subsection \ref{Loss Discretization} and Appendix \ref{Matrix Definitions}.
% \begin{equation}\label{eq2_optimal_coeff_1D}
%     (\mathbf{a}^{p,\alpha}, \mathbf{b}^{p,\alpha}) =  \arg\min_{(\mathbf{a}, \mathbf{b})} \left( \Theta \, 
%     \left \Vert p\frac{d^2}{dx^2} (\mathbf{a}\cdot \mathbf{w}^{\alpha} + \mathbf{b}\cdot \mathbf{v}^{\alpha}) - l^p \right\Vert^2_{L^2(\Omega)} 
%     + \left\Vert [p \frac{d}{dx} (\mathbf{a}\cdot \mathbf{w}^{\alpha} + \mathbf{b}\cdot \mathbf{v}^{\alpha}) \cdot \mathbf{n}] \right\Vert^2_{L^2(\Gamma)} 
%     \right).
% \end{equation}
% \begin{remark}
% In one spatial dimension, the $L^2$ norm over the interface $\Gamma$ appearing in the second term of equation~\eqref{eq2_optimal_coeff_1D} simplifies to a finite sum of squared point evaluations at the interface locations. This significantly reduces the computational cost of evaluating the interface residual in 1D problems.
% \end{remark}
 A schematic illustration of the LS-ReCoNN architecture for the one-dimensional case is provided in Figure~\ref{fig_LS_ReCoNN_1D}.

\begin{figure}[htbp]
\begin{center}
\scalebox{1}{\usetikzlibrary{decorations.pathmorphing} % Required for zigzag lines
\begin{tikzpicture}[
punkt/.style={
   rectangle,
   rounded corners,
   draw=black, thick,
   text width=2em,
   minimum height=1em,
   text centered}
]

\Vertex[x=-2.3,y=0,label=$ x$,color=white,size=0.7,fontsize=\normalsize]{X}
\draw (-3.2,0) node {Input};
\draw[thick, ->] (-1.95, 0) -- (-1.3, 0); 
\node[punkt, minimum width = 4.5em, minimum height = 2em] (block) at (-0.4, 0) {};
% \node[punkt, minimum width = 3em, minimum height = 3.4em] (block) at (1.7, 0) {};
\draw (-0.4, 0) node{Deep NN};
\draw[thick, ->] (0.4, 0) -- (1, 0.3); 
\draw[thick, ->] (0.4, 0) -- (1, -0.3); 
\draw (1.6, 0.3) node{${\color{red}\mathbf{w}^{\alpha}}(x)$};
\draw (1.6, -0.3) node{${\color{red}\mathbf{v}^{\alpha}}(x)$};

% \draw[thick] (-2.3, -0.35) -- (-2.3, -1); % Vertical line with arrow
% \draw[thick, ->] (-2.3, -1) -- (1, -1);  % Horizontal line with arrow
% \draw (1.5, -1) node{$\Psi(x)$};

% \draw[thick, ->] (2.1, 0) -- (2.5, -0.4);
% \draw[thick, ->] (2.1, -1) -- (2.5, -0.6);

% \Vertex[x=3.1,y=-0.5,label=${\color{red}\mathbf{w}^{\alpha}}(x)$,color=white,size=1,fontsize=\normalsize]{X}
% \draw (3.1, -0.5) node{${\color{red}\mathbf{w}^{\alpha}}(x)$};

\node[punkt, minimum width = 17em, minimum height = 4.2em, draw, black] (block) at (-.8, 0) {};

\draw (4.2,0) node{ $p, {\color{red}\mathbf{w}^\alpha}, {\color{red}\mathbf{v}^\alpha} $};
\node[punkt, minimum width = 5.5em, minimum height = 2em] (block) at (6.6, 0) {};
\draw (6.6, 0) node{LS solver};
\draw[thick, ->] (5, 0) -- (5.5, 0);

\draw[thick, ->] (7.7, 0) -- (8.2, 0); % 

\node[punkt, minimum width = 18.8em, minimum height = 4.2em, draw, black] (block) at (6.7, 0) {};
\draw (9.1, 0) node{${\color{blue}\mathbf{a}^{p, \alpha}}$, ${\color{blue}\mathbf{b}^{p, \alpha}}$};

%%%%%%%%%%%%%%%%%%%%%%%%%%%%%%%%%%%%%%%%%%%%%%%%%

% Add the flash-like directed lines from the left blue dashed box
\draw[thick] (-1, -.75) -- (-1, -1.9); % Vertical line with arrow
\draw[thick, ->] (-1, -1.9) -- (.2, -1.9);  % Horizontal line with arrow

% Add the flash-like directed lines from the right blue dashed box
\draw[thick] (7.4, -.75) -- (7.4, -1.9); % Vertical line with arrow
\draw[thick, ->] (7.4, -1.9) -- (6.2, -1.9); % Horizontal line with arrow

%%%%%%%%%%%%%%%%%%%%%%%%%%%%%%%%%%%%%%%%%%
\node[punkt, minimum width = 16.6em, minimum height = 2.5em] (block) at (3.2, -1.9) {};
\draw (3.2, -1.9) node{$\displaystyle u^{p, \alpha}(x) = {\color{blue}\mathbf{a}^{p,\alpha}} \cdot {\color{red}\mathbf{w}^\alpha}(x) + {\color{blue}\mathbf{b}^{p,\alpha}} \cdot {\color{red}\mathbf{v}^\alpha}(x)$};

\end{tikzpicture}}
\end{center}		
\caption{Schematic of the LS-ReCoNN architectures for solving one-dimensional parametric transmission problems.}\label{fig_LS_ReCoNN_1D}
\end{figure}
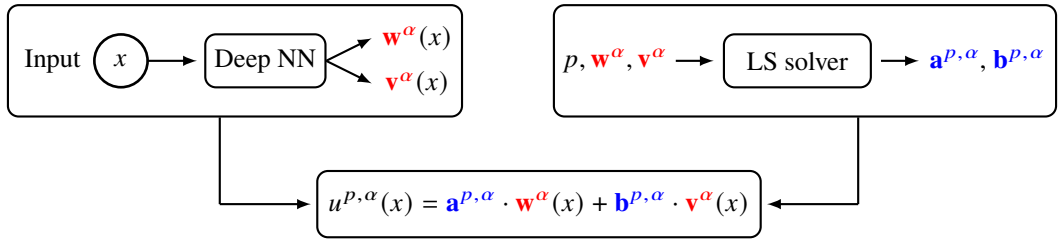

%%%%%%%%%%%%%%%%%%%%%%%%%%%%%%%%%%%%%%%%%%%%%
%%%%%%%%%%%%%%%%%%%%%%%%%%%%%%%%%%%%%%%%%%%%%
%%%%%%%%%%%%%%%%%%%%%%%%%%%%%%%%%%%%%%%%%%%%%
%%%%%%%%%%%%%%%%%%%%%%%%%%%%%%%%%%%%%%%%%%%%%

\subsubsection*{Two-Dimensional Problems}
In two-dimensional problems, in addition to discontinuities in the first derivative across domain interfaces, singular points may also arise. According to the solution structure given in \eqref{eq2_exact solution_1D} and \eqref{eq2_exact solution_2D}, it is essential to incorporate singular functions into the approximate solution.

\noindent Recalling the definition of $\mathbf{w}^\alpha(x)$ and $\mathbf{v}^\alpha(x)$ from Equation \eqref{eq2_walpha-valpha} and $s^p_{ij}(x)$ from \eqref{eq2_sjp}, the LS-ReCoNN approximate solution for two-dimensional problems is expressed as follows

\begin{align}\label{eq2_LS-ReCoNN_solution_2D}
\begin{array}{rl}
     \mathfrak{u}^p(x) \approx u^{p, \alpha}(x) =&w^{p, \alpha}(x) + v^{p, \alpha}(x) + s^{p, \alpha}(x)\\:=&\hspace{-0.2cm} \displaystyle \sum_{n = 1}^{N_1} a^{\alpha, p}_n w^{\alpha}_n(x) \hspace{0.2cm} + \sum_{m = 1}^{N_2} b^{\alpha, p}_m v^{\alpha}_m(x) \hspace{0.2cm}+ \displaystyle \sum _{i=1}^{N_s}\sum _{j=1}^{N_3} {c_{ij}^{\alpha, p}} s_{ij}^{p}(x) \\[3mm]
     =& \hspace{-0.2cm} \displaystyle \underbrace{\mathbf{a}^{p,\alpha} \cdot \mathbf{w}^{\alpha}(x)}_{\substack{\text{Smooth}\\ \text{ component}}} + \underbrace{\mathbf{b}^{p,\alpha} \cdot \mathbf{v}^{\alpha}(x)}_{\substack{\text{Gradient jump}\\ \text{ component}}} +\underbrace{{\sum _{i=1}^{N_s}\mathbf{c}^{\alpha, p}_i} \cdot \mathbf{s}^{p}_i(x)}_{\substack{\text{Singular}\\ \text{ component}}}, \qquad \forall x \in \Omega \subset \mathbb{R}, \\[2mm]
\end{array}
\end{align}
where $\mathbf{a}^{p, \alpha} = (a_n^{p, \alpha})_{n=1}^{N_1}$, $\mathbf{b}^{p, \alpha} = (b_m^{p, \alpha})_{m=1}^{N_2}$ and {$\mathbf{c}^{\alpha, p}_i = (c_{ij}^{\alpha, p})_{j=1}^{N_3}$} are the coefficient vectors, which are unknown constants and considered as the weights of the optimal approximate solution within the approximation subspaces $U_1^\alpha$, $U_2^\alpha$, $U_3^p$, respectively. For each parameter $p$, they can be determined by solving an LS optimization problem. The construction of the LS system is detailed in Subsection \ref{Loss Discretization} and Appendix \ref{Matrix Definitions}.
% \begin{equation}\label{eq2_optimal_coeff_2D}
% \begin{array}{rl}
%     (\mathbf{a}^{p,\alpha}, {\mathbf{b}^{p, \alpha}} , {\mathbf{c}^{p, \alpha}}) =  \displaystyle \arg\min_{(\mathbf{a}, \mathbf{b}, \mathbf{c})} \Big( \hspace{-2mm}&\Theta \, 
%     \left\Vert p \,( \Delta (\mathbf{a}\cdot\mathbf{w}^{\alpha} + \mathbf{b}\cdot\mathbf{v}^{\alpha} )+ \mathbf{c}\cdot \mathbf{S}^{p}) - l^p \right\Vert^2_{L^2(\Omega)} 
%     +\\
%     &\left\Vert \left[ p \, \nabla (\mathbf{a}\cdot\mathbf{w}^{\alpha} + \mathbf{b}\cdot\mathbf{v}^{\alpha}) \cdot \mathbf{n} \right] \right\Vert^2_{L^2(\Gamma)} 
%     \Big),
% \end{array}
% \end{equation}
A schematic illustration of the LS-ReCoNN architecture for the two-dimensional case is provided in Figure~\ref{fig_LS_ReCoNN_2D}.
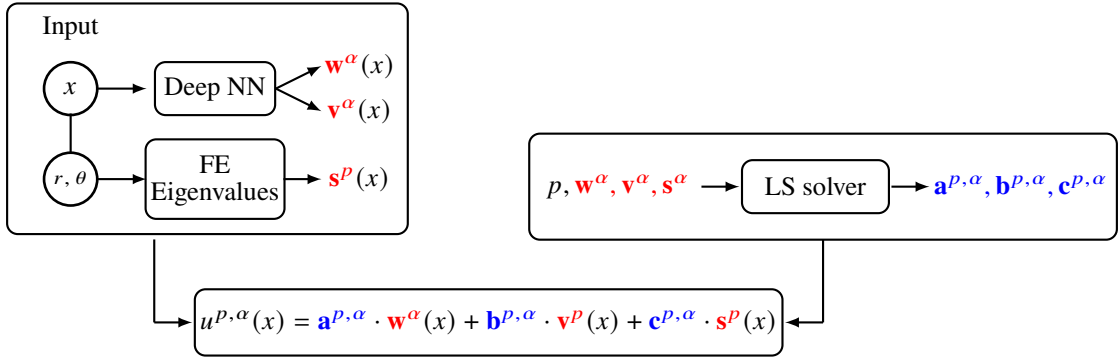
\begin{figure}[H]
\begin{center}
\scalebox{1}{\usetikzlibrary{decorations.pathmorphing} % Required for zigzag lines
\begin{tikzpicture}[
punkt/.style={
   rectangle,
   rounded corners,
   draw=black, thick,
   text width=2em,
   minimum height=1em,
   text centered}
]

\Vertex[x=-2.3,y=0,label=$ x$,color=white,size=0.7,fontsize=\normalsize]{X}
\draw (-2.3,.8) node {Input};
\draw[thick, ->] (-1.95, 0) -- (-1.3, 0); 
\node[punkt, minimum width = 4.5em, minimum height = 2em] (block) at (-0.4, 0) {};
\draw (-0.4, 0) node{Deep NN};
\draw[thick, ->] (0.4, 0) -- (1, 0.3); 
\draw[thick, ->] (0.4, 0) -- (1, -0.3); 
\draw (1.5, 0.3) node{${\color{red}\mathbf{w}^{\alpha}}(x)$};
\draw (1.5, -0.3) node{${\color{red}\mathbf{v}^{\alpha}}(x)$};
%%%%%%%%%%%%%%%%%%%%%%%%%%%%%%%%%%%%%
% \draw[thick] (-2.1, -0.3) -- (-2.1, -0.8); % Vertical line with arrow
% \draw[thick, ->] (-2.1, -0.8) -- (1, -0.8);  % Horizontal line with arrow
% \draw (1.5, -0.8) node{$\bar{\Psi}(x)$};
% %%%%%%%%%%%%%%%%%%%%%%%%%%%%%%%%%%%%%
% \draw[thick] (-2.2, -0.33) -- (-2.2, -1.5); % Vertical line with arrow
% \draw[thick, ->] (-2.2, -1.5) -- (1, -1.5);  % Horizontal line with arrow
% \draw (1.5, -1.5) node{$\Phi(x)$};
%%%%%%%%%%%%%%%%%%%%%%%%%%%%%%%%%%%%%

% \draw[thick, ->] (2.1, 0) -- (2.5, -0.5);
% \draw[thick, ->] (2.1, -0.8) -- (2.5, -0.8);
% \draw[thick, ->] (2.1, -1.5) -- (2.5, -1.1);
% \draw (3.2, -0.8) node{${\color{red}\mathbf{w}^{\alpha}}(x)$};

%%%%%%%%%%%%%%%%%%%%%%%%%%%%%%%%%%%%%

\draw[thick] (-2.3, -0.33) -- (-2.3, -.9); % Vertical line with arrow
\Vertex[x=-2.3, y=-1.2, label={$r,\theta$}, size=0.7, color=white]{r}
% \draw (1.5, -1.5) node{$\Phi(x)$};
%%%%%%%%%%%%%%%%%%%%%%%%%%%%%%%%%%%%%
\draw[thick, ->] (-1.95, -1.2) -- (-1.3, -1.2); 
\node[punkt, minimum width = 5.2em, minimum height = 3em] (block) at (-0.4, -1.2) {};
\draw (-0.4, -1) node{FE};
\draw (-0.4, -1.4) node{Eigenvalues};
\draw[thick, ->] (0.5, -1.2) -- (1, -1.2); 
\draw (1.5, -1.2) node{${\color{red}\mathbf{s}^{p}}(x)$};
%%%%%%%%%%%%%%%%%%%%%%%%%%%%%%%%%%%%%
% \draw[thick] (-0.4, -2.7) -- (-0.4, -2.9); % Vertical line with arrow
% \draw[thick, ->] (-0.4, -2.9) -- (1, -2.9);  % Horizontal line with arrow
% \draw (1.5, -2.9) node{$\lambda$};
%%%%%%%%%%%%%%%%%%%%%%%%%%%%%%%%%%%%%
% \draw[thick] (-2.3, -2.55) -- (-2.3, -3.6); % Vertical line with arrow
% \draw[thick, ->] (-2.3, -3.6) -- (1, -3.6);  % Horizontal line with arrow
% \draw (1.5, -3.6) node{$\eta(r)$};
%%%%%%%%%%%%%%%%%%%%%%%%%%%%%%%%%%%%%

% \draw[thick, ->] (2.1, -2.2) -- (2.5, -2.5);
% \draw[thick, ->] (2.1, -2.8) -- (2.5, -2.8);
% \draw[thick, ->] (2.1, -3.5) -- (2.5, -3);
% \draw (3.2, -2.8) node{${\color{red}\mathbf{s}^{p}}(x)$};
%%%%%%%%%%%%%%%%%%%%%%%%%%%%%%%%%%%%%
\node[punkt, minimum width = 15em, minimum height = 8.7em, draw, black] (block) at (-0.5, -.4) {};

% \node[punkt, minimum width = 3.1em, minimum height = 8em, draw, red] (block) at (3.2, -1.8) {};
% \draw (3.2, .1) node{${\color{red}\mathbf{u}^{p}}(x)$};
%%%%%%%%%%%%%%%%%%%%%%%%%%%%%%%%%%%%%

\draw (4.9,-1.3) node{ $p, {\color{red}\mathbf{w}^\alpha, \mathbf{v}^\alpha, \mathbf{s}^\alpha} $};
\node[punkt, minimum width = 5.5em, minimum height = 2em] (block) at (7.5, -1.3) {};
\draw (7.5, -1.3) node{LS solver};
\draw[thick, ->] (6, -1.3) -- (6.5, -1.3); 
\draw[thick, ->] (8.5, -1.3) -- (9., -1.3); % 

\node[punkt, minimum width = 22em, minimum height = 4em, draw, black] (block) at (7.6, -1.3) {};
\draw (10.2, -1.3) node{${\color{blue}\mathbf{a}^{p, \alpha}, \mathbf{b}^{p, \alpha}, \mathbf{c}^{p, \alpha}}$};

%%%%%%%%%%%%%%%%%%%%%%%%%%%%%%%%%%%%%%%%%%%%%%%%%

% Add the flash-like directed lines from the left blue dashed box
\draw[thick] (-1.2, -2) -- (-1.2, -3.1); % Vertical line with arrow
\draw[thick, ->] (-1.2, -3.1) -- (-0.7, -3.1);  % Horizontal line with arrow

% Add the flash-like directed lines from the right blue dashed box
\draw[thick] (7.6, -2) -- (7.6, -3.1); % Vertical line with arrow
\draw[thick, ->] (7.6, -3.1) -- (7.1, -3.1); % Horizontal line with arrow

%%%%%%%%%%%%%%%%%%%%%%%%%%%%%%%%%%%%%%%%%%
\node[punkt, minimum width = 22em, minimum height = 2.5em] (block) at (3.2, -3.1) {};
\draw (3.2, -3.1) node{$\displaystyle u^{p, \alpha}(x) = {\color{blue}\mathbf{a}^{p,\alpha}} \cdot {\color{red}\mathbf{w}^\alpha}(x) + {\color{blue}\mathbf{b}^{p, \alpha}} \cdot {\color{red}\mathbf{v}^p}(x)+ {\color{blue}\mathbf{c}^{p, \alpha}} \cdot {\color{red}\mathbf{s}^p}(x)$};

\end{tikzpicture}}
\end{center}		
\caption{Schematic of the LS-ReCoNN architectures for solving two-dimensional parametric transmission problems.}\label{fig_LS_ReCoNN_2D}
\end{figure}

\vspace{-.5cm}
\subsection{LS-ReCoNN Loss Function}\label{LS-ReCoNN Loss Function}
As previously noted, the singular functions $\{s_j^p\}_{j=1}^{N_3}$ are computed using an FE eigenvalue solver, rendering the methodology non-conforming.  Specifically, the small residuals at the junctions associated with numerical approximation would cause the exact $\mathbb{X}^p$-norm \eqref{eq2_continouse loss} to become infinite. To address this, we formulate a specific loss function tailored to the discretized space.

\noindent 
Using the decompositions \eqref{eq2_exact solution_1D} and \eqref{eq2_exact solution_2D}, and the derivations in Appendix \ref{Strong Formulation of the Decomposed Solution}, the strong formulation \eqref{eq2_strong} is reformulated as follows: 
Find the stress intensity factors $\kappa^p_{ij} \in \mathbb{R}$ and the components $\mathfrak{w}^p \in \mathbb{U}_1$ and $\mathfrak{v}^p \in \mathbb{U}_2$ such that
\begin{align}
   \label{eq2_transmission elliptic problem}p\left(\Delta (\mathfrak{w}^p + \mathfrak{v}^p)+ \sum _{i = 1}^{N_s}\sum _{j = 1}^{K_i}\kappa^p_{ij} \mathcal{S}^p_{ij}\right) = l^p,& \qquad \text{in}\,\, \Omega\setminus\Gamma,\\
\label{eq2_Continuity-flux}
[p \nabla \mathfrak{v}^p \cdot \mathbf{n} ] + [p] (\nabla \mathfrak{w}^p \cdot \mathbf{n}) = 0,
% \left[p \nabla \mathfrak{w}^p\cdot\mathbf{n}\right]=0,
& \qquad \text{on}\,\, \Gamma,
\end{align}
Since the singular basis functions $\mathfrak{s}^p_{ij}$ and the cutoff function $\eta$ are predetermined for $i = 1, \ldots, N_s$ and $j = 1, \ldots, K_i$, we define $\mathcal{S}^{p}_{ij}$ in polar coordinates $(r, \theta)$ centered at the singular point $x_j$, as follows
\begin{equation}\label{eq2_mathcal{S}ij_main}
\mathcal{S}^p_{ij}(r, \theta):= \Delta (\mathfrak{s}^p_{ij} \eta) =
\begin{cases}
2\, \partial_r \mathfrak{s}^p_{ij}(r, \theta)\, \eta'(r) + \mathfrak{s}^p_{ij}(r, \theta)\left( \eta''(r) + \frac{1}{r}\eta'(r) \right), & \text{if } r \in (\delta_1, \delta_2), \\
0, & \text{otherwise}.
\end{cases}
\end{equation}
%Moreover, provided $\eta$ is a $C^2$ function, $\mathcal{S}^p_{ij}$ belongs to $L^2$ for $i = 1, \ldots, N_s$ and $j = 1, \ldots, K_i$.
Note that $\mathcal{S}^p_{ij}$ is supported exclusively on the annulus $\delta_1 < r < \delta_2$ corresponding to the transition region of the singularity. Outside this interval, the term vanishes: for $r \le \delta_1$, the singular function is exactly harmonic ($\Delta \mathfrak{s}^p_{ij} = 0$), while for $r \ge \delta_2$, the product is zero as the cutoff function $\eta$ vanishes.

\noindent Based on the continuous loss formulation in \eqref{eq2_continouse loss}, the strong formulation \eqref{eq2_strong}, and the LS-ReCoNN solutions given in \eqref{eq2_LS-ReCoNN_solution_1D} and \eqref{eq2_LS-ReCoNN_solution_2D}, the LS-ReCoNN loss function is expressed as
\begin{equation}\label{eq2_loss_total_LS-ReCoNN}
\begin{array}{rl}
{\mathcal{J}}_\mu(U^{\alpha})= \displaystyle \int_\mathbb{P} \,\min_{u \in U^{\alpha}(p)}\Big( & \hspace{-2mm}\displaystyle \,\left\Vert p \left( \Delta \left(\mathbf{a}^{p, \alpha}\cdot\mathbf{w}^{\alpha} + \mathbf{b}^{p, \alpha}\cdot\mathbf{v}^{\alpha}\right) + \sum _{i=1}^{N_s}\sum _{j=1}^{N_3} {c_{ij}^{\alpha, p}} \mathbf{S}_{ij}^{p} \right) - l^p \right\Vert^2_{L^2(\Omega)} + \\
&\hspace{-2mm}\Theta \left\Vert [p \nabla (\mathbf{a}^{p, \alpha}\cdot \mathbf{w}^{p} + \mathbf{b}^{p, \alpha}\cdot \mathbf{v}^{p}) \cdot \mathbf{n}] \right\Vert_{L^2(\Gamma)}^2 
\Big)\,d\mu,
\end{array}   
\end{equation}
where $U^{\alpha}(p) = U_1^\alpha + U_2^\alpha + U_3^p$. In our discrete implementation, we define the singular source term $\mathbf{S}_{ij}^{p}$ using the following radial representation rather than evaluating the Laplacian of the FE-based singular function directly
\begin{equation}\label{eq2_mathbf{S}j}
\mathbf{S}_{ij}^{p}(r, \theta) =
\begin{cases}
2\, \partial_r s^p_{ij}(r, \theta)\, \eta'(r) + s^p_{ij}(r, \theta)\left( \eta''(r) + \dfrac{1}{r} \eta'(r) \right), & \text{if } r \in (\delta_1, \delta_2), \\
0, & \text{otherwise}.
\end{cases}
\end{equation}
If the singular functions $s_{ij}^p$ were exact, the representation \eqref{eq2_mathbf{S}j} would be identical to \eqref{eq2_mathcal{S}ij_main}. However, adopting this form for our FE-computed bases effectively avoids the numerical integration of singularities at the junctions. Indeed, by ignoring the Laplacian in the immediate vicinity of the vertex ($r \leq \delta_1$), we mitigate the ill-conditioned behavior associated with the non-conforming approximation and ensure that the loss remains finite and stable.

\noindent In the following, we state that the loss function ~\eqref{eq2_loss_total_LS-ReCoNN} serves as an upper bound for the approximation error in the \(H_0^1(\Omega)\) norm. Crucially, while our use of FE approximations for the singular components introduces numerical errors, following theorem allows us to quantify these effects. This ensures that we maintain rigorous control over the $H_0^1(\Omega)$ error, even with a non-conforming trial space. Its proof is deferred to
Appendix \ref{Proof of the Error Bound}.

\begin{theorem}\label{Theorem_main}
There exists a constant $C > 0$, independent of the LS-ReCoNN solution $u^{p, \alpha}$, such that
\begin{equation}\label{eq2_upper_bound_H01_total}
\int_{\mathbb{P}} \left\| u^{p, \alpha} - \mathfrak{u}^p \right\|_{H_0^1(\Omega)}^2 \, d\mu(p)
\leq C \left( \mathcal{J}_\mu(U^{p, \alpha})
+ \int_{\mathbb{P}} \| \mathbf{c}^{p, \alpha}\|_{\ell ^2}^2 \left( \| \mu^p - \vartheta ^p \|_{L^2(0, 2\pi)}^2 + \| \lambda^p - \Lambda^p \|_{\ell ^2}^2 \right)\, d\mu(p) \right) ,
\end{equation}
where for any $z \in\{ \vartheta^p, \Lambda^p,\mathbf{c}^{p, \alpha}, \mu^p,  \lambda^p\} $, we have  $z = \left((z_{ij})_{i=1}^{N_s}\right)_{j=1}^{N_3}$.
\end{theorem}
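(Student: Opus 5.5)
The plan is to work pointwise in $p$ and then integrate against $\mu$. Fix $p\in\mathbb{P}$ and write $u^{p,\alpha}=w+v+s$ with $w=\mathbf{a}^{p,\alpha}\cdot\mathbf{w}^\alpha$, $v=\mathbf{b}^{p,\alpha}\cdot\mathbf{v}^\alpha$ and $s=\sum_{ij}c_{ij}^{p,\alpha}s^p_{ij}$. Since $s$ is non-conforming, it is not in $\mathbb{X}^p$, so Theorem~\ref{2_Theorem_1} cannot be applied to $u^{p,\alpha}-\mathfrak{u}^p$ directly.

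The first step is to introduce a conforming comparison function $\tilde s:=\sum_{ij}c^{p,\alpha}_{ij}\,r^{\Lambda^p_{ij}}\vartheta^p_{ij}(\theta)\eta(r)$. It uses the same coefficients but the exact Kellogg singular functions, taking $N_3=K_i$ (or padding with zeros). Then $\tilde u:=w+v+\tilde s\in\mathbb{X}^p$: the singular part has zero flux jump, is harmonic for $r<\delta_1$, and $\operatorname{div}(p\nabla\tilde s)=p\sum c_{ij}\mathcal{S}^p_{ij}\in L^2$. By the triangle inequality,
\[
\|u^{p,\alpha}-\mathfrak{u}^p\|_{H^1_0}^2\le 2\|\tilde u-\mathfrak{u}^p\|_{H^1_0}^2+2\|s-\tilde s\|_{H^1_0}^2 .
\]
For the first term, Theorem~\ref{2_Theorem_1} and \eqref{eq2_upper bound} give
\[
\|\tilde u-\mathfrak{u}^p\|_{H^1_0}^2\lesssim \|p(\Delta(w+v)+\textstyle\sum c_{ij}\mathcal{S}^p_{ij})-l^p\|^2_{L^2}+\Theta\|[p\nabla(w+v)\cdot\mathbf{n}]\|^2_{L^2(\Gamma)} .
\]
Replacing each $\mathcal{S}^p_{ij}$ by $\mathbf{S}^p_{ij}$ turns this into the integrand of $\mathcal{J}_\mu$, at the cost of an extra term $2\|p\sum c_{ij}(\mathcal{S}^p_{ij}-\mathbf{S}^p_{ij})\|^2_{L^2}$. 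The constant $C/\min(p)$ must be uniform in $p$, so I will assume $\mathbb{P}$ keeps $p$ bounded above and away from zero.

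The key technical step is bounding $\|\mathcal{S}^p_{ij}-\mathbf{S}^p_{ij}\|_{L^2}$ and $\|s^p_{ij}-\mathfrak{s}^p_{ij}\|_{H^1_0}$ by $\|\mu^p_{ij}-\vartheta^p_{ij}\|_{L^2(0,2\pi)}+|\lambda^p_{ij}-\Lambda^p_{ij}|$.

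\begin{itemize}
\item \emph{The source-term difference.} On the annulus $\delta_1<r<\delta_2$ everything is smooth in $r$. Write $r^{\lambda}\mu-r^{\Lambda}\vartheta=r^\lambda(\mu-\vartheta)+(r^\lambda-r^\Lambda)\vartheta$, and bound $|r^\lambda-r^\Lambda|\le C|\lambda-\Lambda|$ by the mean value theorem, with $r$ bounded away from $0$ and $\infty$ and $\lambda,\Lambda\in(0,1)$. The radial derivative is handled the same way. Since $\mathbf{S}$ involves only $\partial_r$ and no $\theta$-derivatives, the $L^2(0,2\pi)$ norm of $\mu-\vartheta$ suffices, and this is exactly why the loss is defined with $\mathbf{S}$.
\item \emph{The $H^1$ difference.} I expect this to be the main obstacle. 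The gradient contains $r^{\lambda-1}(\mu-\vartheta)'$, which is square-integrable near $r=0$ since $\lambda>0$, but it is measured by an $H^1$ norm in $\theta$, not $L^2$. My first attempt is to absorb it using the Sturm--Liouville structure. Testing \eqref{eq2_SL} and its FE counterpart with $\mu-\vartheta$ gives
\[
\|\sqrt p(\mu-\vartheta)'\|^2=-\int p(\lambda^2\mu-\Lambda^2\vartheta)(\mu-\vartheta)+\text{(Galerkin consistency error)},
\]
which is controlled by the $L^2$ and eigenvalue errors, up to a consistency term that vanishes for conforming FE spaces. Here $\mu$ and $\vartheta$ are normalized consistently in the $B^p$-inner product, and I take the FE problem to carry the same $\lambda^2$ scaling. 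Failing that, I would argue that the constant $C$ may absorb the equivalence of norms on the finite-dimensional FE space.
\end{itemize}

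Finally, the $r$-integrals converge uniformly because $\lambda,\Lambda$ stay in a compact subset of $(0,1)$. By Cauchy--Schwarz, $|\sum c_{ij}e_{ij}|^2\le\|\mathbf{c}\|_{\ell^2}^2\sum e_{ij}^2$, which produces the factor $\|\mathbf{c}^{p,\alpha}\|^2_{\ell^2}$ multiplying the eigen-errors. Since $\mathbf{a},\mathbf{b},\mathbf{c}$ are the minimizers, the loss term equals the $\min$ inside $\mathcal{J}_\mu$. Integrating over $\mathbb{P}$ with respect to $\mu$ then gives \eqref{eq2_upper_bound_H01_total}.
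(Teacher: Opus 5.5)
Your first half is the paper's argument. Your $\tilde u=w+v+\tilde s$ is the paper's $u_*^{p,\alpha}$ of Lemma~\ref{Lemma_2_1}. Bounding its error by the residual with the exact source $\mathcal S^p$ is that lemma; the paper proves it directly by integration by parts, Poincar\'e/trace and coercivity, which amounts to Theorem~\ref{2_Theorem_1}. Replacing $\mathcal S^p$ by $\mathbf S^p$ on the annulus, where only $\partial_r$ appears so $L^2$ angular errors suffice, is Lemma~\ref{Lemma_2_2}, and integrating in $p$ is Theorem~\ref{Theorem_parametric}. The paper stops there: Theorem~\ref{Theorem_single} calls the bound for $u^{p,\alpha}$ an ``immediate consequence'' and never estimates $u^{p,\alpha}-u_*^{p,\alpha}=\sum_{ij}c^{p,\alpha}_{ij}(s^p_{ij}-\mathfrak s^p_{ij})$. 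You are right that this $H^1_0$ term must be controlled. That step is where your proposal breaks.

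The consistency term you expect to vanish for a conforming FE space does not vanish. The discrete eigenproblem can only be tested with FE functions, and $\mu-\vartheta$ does not lie in the FE space. Correct the sign to $\int p\vartheta'v'=\Lambda^2\int p\vartheta v$ (as printed, \eqref{eq2_SL} admits only $\vartheta=0$). Then test the exact problem with $\mu$ and normalize $\int p\mu^2=\int p\vartheta^2=1$. The leftover term is $(\lambda^2-\Lambda^2)\int p\mu\vartheta$, and you get the classical identity
\[
\int_0^{2\pi}p\,|(\mu-\vartheta)'|^2\,d\theta=(\lambda^2-\Lambda^2)+\Lambda^2\int_0^{2\pi}p\,(\mu-\vartheta)^2\,d\theta .
\]
So the eigenvalue error enters linearly, not squared, and this is sharp. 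Galerkin eigenvalue errors are of the size of the squared energy error of the eigenfunction. For elements of degree $k$ and mesh size $h$, this gives $|\mu-\vartheta|^2_{H^1}\sim h^{2k}$, whereas $\|\mu-\vartheta\|^2_{L^2}+|\lambda-\Lambda|^2\sim h^{2k+2}+h^{4k}$.

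The angular $H^1$ error genuinely appears on the left-hand side, since $\|r^{\lambda}(\mu-\vartheta)\eta\|^2_{H^1_0}\ge\frac{\delta_1^{2\lambda}}{2\lambda}|\mu-\vartheta|^2_{H^1(0,2\pi)}$. Take $\mathfrak u^p=\mathfrak w^p+\mathfrak v^p+\kappa\,\mathfrak s$ and the candidate $\mathfrak w^p+\mathfrak v^p+\kappa\,s$. Its loss is only $\kappa^2\|p(\mathbf S-\mathcal S)\|^2_{L^2}$, while its error is $\kappa^2\|s-\mathfrak s\|^2_{H^1_0}$. Hence no candidate-wise argument, yours or the paper's, yields the stated bound with a constant independent of the FE discretization.

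Your fallback does not rescue this. Norm equivalence on the FE space does not apply to $\mu-\vartheta$. Any constant extracted from the fixed 16-function space would blow up under refinement, contradicting the paper's claim that only $L^2$ angular errors matter.

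What your argument actually proves is one of two things:
\begin{itemize}
\item the stated bound for $\tilde u=u_*^{p,\alpha}$; or
\item the bound for $u^{p,\alpha}$ with an extra term $\int_{\mathbb P}\|\mathbf c^{p,\alpha}\|^2_{\ell^2}\,|\mu^p-\vartheta^p|^2_{H^1(0,2\pi)}\,d\mu(p)$ on the right. By the identity above, this is equivalent to a term linear in $|\lambda^2-\Lambda^2|$.
\end{itemize}
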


\noindent Theorem~\ref{Theorem_main} establishes that minimizing the loss functional \( \mathcal{J}_\mu(U^{p, \alpha}) \) on space \( U^{p, \alpha} \), together with reducing the approximation errors in \( \lambda^p \) and \( \vartheta^p \) for each $p \in \mathbb{P}$, leads to an accurate approximation of the exact solution. By isolating the errors associated with the singular components, the theorem confirms that the $H^1_0$ error remains bounded as long as the FE eigenvalue solver and the loss minimization are sufficiently accurate. Moreover, the result confirms the intuition that when the stress intensity factors are large, errors in the singular functions have a more pronounced impact on the overall \( H^1_0 \)-norm error. Significantly, the upper bound in~\eqref{eq2_upper_bound_H01_total} depends only on the $L^2$ errors of the angular functions and not their $H^1$ errors.

\subsection{Loss Discretization}\label{Loss Discretization}
\noindent For a practical implementation, the loss function \eqref{eq2_loss_total_LS-ReCoNN} must be discretized. This involves two steps: approximating the integral over the parameter space and discretizing the $L^2$-norms. In this work, we utilize the Monte Carlo approach for both steps.

\subsubsection*{ Integral Approximation over the Parameter Space}
The loss function ${\mathcal{J}}_\mu(U^{p, \alpha})$ is approximated using a Monte Carlo estimate using a set of i.i.d. samples $P \subset \mathbb{P}$. The specific sampling procedures are detailed in Appendix \ref{Parameter Sampling Details}. Accordingly, the discrete loss is computed as the mean of the PDE residuals and interface jumps across the sampled parameter space
\begin{equation}\label{eq_parameter-discretization_loss}
\begin{array}{rl}
\displaystyle
{\mathcal{J}}_\mu(U^{p, \alpha}) \approx  \frac{1}{|P|} \sum_{p \in P} \min_{u \in U^{p, \alpha}}\Big( &\hspace{-2mm} \displaystyle  \,\left\Vert p \left( \Delta \left(\mathbf{a}^{p, \alpha}\cdot\mathbf{w}^{\alpha} + \mathbf{b}^{p, \alpha}\cdot\mathbf{v}^{\alpha}\right) + {\mathbf{c}^{p, \alpha}}\cdot \mathbf{S}^{p} \right) - l^p \right\Vert^2_{L^2(\Omega)} + \\
&\hspace{-2mm}\Theta\left\Vert [p \nabla (\mathbf{a}^{p, \alpha}\cdot \mathbf{w}^{p} + \mathbf{b}^{p, \alpha}\cdot \mathbf{v}^{p}) \cdot \mathbf{n}] \right\Vert_{L^2(\Gamma)}^2 
\Big).
\end{array}
\end{equation}

\subsubsection*{Integral Approximation over the computational domain}
The $L^2$-norms in the loss function \eqref{eq_parameter-discretization_loss} are approximated using Monte Carlo over the domain $\Omega$ and the interface $\Gamma$. For a fixed network parameter $\alpha$, the discrete loss is expressed in matrix form as 
\begin{equation}\label{eq2_loss_D}
    {\mathcal{J}}_D(U^{p, \alpha}) =  \frac{1}{|P|} \sum_{p \in P}  \min_{u \in U^{p, \alpha}} \left( \left\Vert \mathbf{B}^{p, \alpha} \mathbf{y}^{p, \alpha} - \mathbf{l}^p\right\Vert_2^2 \right).
\end{equation}
where $\mathbf{y}^{p, \alpha} = (\mathbf{a}^{p, \alpha}, \mathbf{b}^{p, \alpha}, \mathbf{c}^{p, \alpha})^T$. The specific construction of matrix $\mathbf{B}^{p, \alpha}$ and vector $\mathbf{l}^p$ are detailed in Appendix \ref{Matrix Definitions}.
\begin{remark}
    Based on the discrete formulations presented in \eqref{eq2_loss_D}, the optimal coefficients $\mathbf{a}^{p, \alpha}$, $\mathbf{b}^{p, \alpha}$, and $\mathbf{c}^{p}$, are obtained by solving the following LS solver
\begin{equation}\label{eq2_optimal_coeff_discrete}
    \mathbf{y}^{p,\alpha} =  \arg\min_{\mathbf{y}} \Vert\mathbf{B}^{p,\alpha}\mathbf{y}-\mathbf{l}^p\Vert_2^2, \qquad \forall p \in P
\end{equation}
We utilize a Cholesky decomposition to solve the resulting normal equations and minimize the residual.
\end{remark}
\noindent Finally, the computation of the loss function is outlined in Algorithm \ref{Algorithm_1}. 

\begin{algorithm}[H]\label{Algorithm_1}
{
\begingroup

\SetAlgoLined
Sample a finite batch of parameter values $P$.\\
Construct $\{\mathbf{B}^{p,\alpha},\mathbf{l}^p\}_{p\in P}$.\\
Compute all vectors of optimal coefficients $\{\mathbf{y}^{p,\alpha}\}_{p\in P}$, by solving the LS problems \eqref{eq2_optimal_coeff_discrete}.\\
Estimate the resulting minimum-residual average using equation \eqref{eq2_loss_D}.
\caption{Implementation for computing the loss function}\label{train_step_guidelines}
\endgroup
}
\end{algorithm}

\begin{remark}
    The matrix $\mathbf{B}^{p,\alpha}$ can be represented as a linear combination of parameter-independent matrices with parameter-dependent coefficients. These components are computed once per epoch using high-precision integration, after which the matrices $\mathbf{B}^{p,\alpha}$ for each batch of parameters are assembled efficiently via parameter-dependent linear combinations. This strategy makes the assembly of the linear systems nearly independent of the number of parameters in each iteration while ensuring that $\mathbf{B}^{p,\alpha}$ is updated consistently for each batch, thus significantly reducing the overall computational cost.
\end{remark}

%%%%%%%%%%%%%%%%%%%%%%%%%%%%%%%%%%%%%%%%%%%%%
%%%%%%%%%%%%%%%%%%%%%%%%%%%%%%%%%%%%%%%%%%%%%
%%%%%%%%%%%%%%%%%%%%%%%%%%%%%%%%%%%%%%%%%%%%%
%%%%%%%%%%%%%%%%%%%%%%%%%%%%%%%%%%%%%%%%%%%%%
%%% Analysis of Computational Cost %%%%%%%%%%
%%%%%%%%%%%%%%%%%%%%%%%%%%%%%%%%%%%%%%%%%%%%%
%%%%%%%%%%%%%%%%%%%%%%%%%%%%%%%%%%%%%%%%%%%%%
%%%%%%%%%%%%%%%%%%%%%%%%%%%%%%%%%%%%%%%%%%%%%
%%%%%%%%%%%%%%%%%%%%%%%%%%%%%%%%%%%%%%%%%%%%%

\section{Analysis of Computational Cost}\label{Analysis of Computational Cost}

\noindent A primary advantage of the LS-ReCoNN framework is its computational efficiency when addressing parametric problems. While the training cost of many deep learning approaches for PDEs scales linearly with the number of parameter instances, our methodology utilizes a separated representation of the solution for the principal part to ensure that the computational cost associated with the increase in the number of parameters becomes negligible.

\noindent To quantify this efficiency, we denote the number of trainable NN parameters by $N_\alpha$, the number of parameter instances by $N_p$, the number of basis functions (related to principle and singular parts) in the LS system by $N_{LS}$, the number of basis functions in the 1D FE eigenvalue solver by $N_{FE}$, and the total number of integration points by $N_x$. The LS-ReCoNN solution is constructed as a combination of basis functions—approximated by the NN and the 1D FE eigenvalue solver—and parameter-dependent coefficients determined by an LS solver. This structure allows the computational cost of loss evaluation per iteration to be partitioned into parameter-independent and parameter-dependent components as follows
\begin{equation}\label{eq2_cost}
    \text{Cost} \approx \overbrace{\underbrace{C_1 (N_\alpha \cdot N_x)}}^{\substack{\text{Evaluating the NN}\\ \text{ and its gradients}}}_{\text{parameter-independent}} + \underbrace{\overbrace{C_2 (N_p \cdot N_{LS}^3)}^{\substack{\text{Solving the LS system}}} + \overbrace{C_3 (N_p \cdot N_{FE}^3\cdot N_s)}^{\substack{\text{Solving the FE }\\ \text{eigenvalue problem}}}}_{\text{parameter-dependent}}
\end{equation}
% \begin{equation}\label{eq2_cost}
% \text{Cost} \approx \underbrace{\overbrace{C_1 (N_\alpha \cdot N_x)}^{\substack{\text{Evaluating the NN} \ \text{and gradients}}} + \overbrace{C_2 (N_{x} \cdot N_x)}^{\text{Constructing the LS sub-matrices}}}_{\text{parameter-independent}} + \underbrace{\overbrace{C_3 (N_p \cdot N_{LS}^3)}^{\substack{\text{Solving the} \ \text{LS system}}} + \overbrace{C_4 (N_p \cdot N_{FE}^3 \cdot N_s)}^{\substack{\text{Solving the FE} \ \text{eigenvalue solver}}}}_{\text{parameter-dependent}}
% \end{equation}
where $C_1, C_2, C_3,$ and $C_4$ are constants. In this formulation, the parameter-independent part is performed at the $N_x$ quadrature points only once per epoch, independently of the number of parameter samples $N_p$ within the batch. Consequently, the resulting values are reused across the entire parameter space. In contrast, for parameter-dependent terms, $N_{LS}$ is kept to a reasonable size, and the 1D FE eigenvalue solver requires a very small number of degrees of freedom. Given that $N_\alpha$ and $N_x$ are typically large, the total computational effort for any reasonable $N_p$ is heavily dominated by parameter-independent NN evaluations. This hierarchy ensures that the overall execution time remains nearly invariant with the number of parameter instances, providing a significant advantage over traditional methods.

%%%%%%%%%%%%%%%%%%%%%%%%%%%%%%%%%%%%%%%%%%%%%
%%%%%%%%%%%%%%%%%%%%%%%%%%%%%%%%%%%%%%%%%%%%%
%%%%%%%%%%%%%%%%%%%%%%%%%%%%%%%%%%%%%%%%%%%%%
%%%%%%%%%%%%%%%%%%%%%%%%%%%%%%%%%%%%%%%%%%%%%
%%%%%%%%%%% Numerical Examples %%%%%%%%%%%%%%
%%%%%%%%%%%%%%%%%%%%%%%%%%%%%%%%%%%%%%%%%%%%%
%%%%%%%%%%%%%%%%%%%%%%%%%%%%%%%%%%%%%%%%%%%%%
%%%%%%%%%%%%%%%%%%%%%%%%%%%%%%%%%%%%%%%%%%%%%
%%%%%%%%%%%%%%%%%%%%%%%%%%%%%%%%%%%%%%%%%%%%%

\section{Numerical Examples}\label{Sec_Numerical Examples}
\subsection{One-Dimensional Problems}\label{One-Dimensional Problems}
Let $\Omega = (0, \pi)$. We consider a one-dimensional parametric transmission problem governed by equation \eqref{eq2_strong}, with a source term given by $l^p (x) = 25 \sin(5x)$. The domain is partitioned into $I=5$ disjoint subdomains, as illustrated in Figure \ref{fig_geometry_1D}. Each subdomain is defined by
$\Omega_i = ((i - 1)\pi/5, i\pi/5)$ for $ i = 1, \ldots, 5.$
The parameter $p:\Omega\to\mathbb{R}$ is assumed to be piecewise constant, taking values in the interval $(0.01, 50)$ on each subdomain, i.e.,
\begin{equation}\label{eq_sigma}
p(x)=p_i \in (0.01, 50), \qquad \forall x\in \Omega_i, \qquad i = 1, \ldots, 5.
\end{equation}
The exact solution of the problem is given by
$ \mathfrak{u}^p(x)= (\sin{5x})/{p_i}$ for $ x\in \Omega_i$ and $i = 1, \ldots, 5$. 
We employ an NN with 
three hidden layers, each consisting of 10 neurons, 
followed by an output layer with 50 neurons ($N_1 =10$ and  $N_2=40$). 
A discretized loss function \eqref{eq2_loss_D} with $\Theta = 1$ and 100 collocation points was used in each subdomain to train $\mathbf{u}^{p, \alpha}$. Furthermore, the validation loss is evaluated using 103 quadrature points in each subdomain. In each training epoch, a total of \( 500 \) samples are generated per parameter. All parameter values are drawn independently from a uniform distribution over the interval \( (0.01, 50) \). The validation set is constructed using the same sampling approach, comprising $500$ samples for each parameter, which are fixed before the start of training.  The network is trained for $10^3$ iterations using the Adam optimizer, with a learning rate decreased linearly from $10^{-2}$ to $10^{-4}$.

\noindent Figure \ref{fig_1D_Loss} shows the evolution of the loss for the LS-ReCoNN method on both the training and validation datasets throughout the training process. We see a strong alignment between the training and validation losses, indicating that no overfitting has occurred. 

\begin{figure}[H]
    \centering
    \vspace{-0.5cm}
        \includegraphics[width=0.65\textwidth]{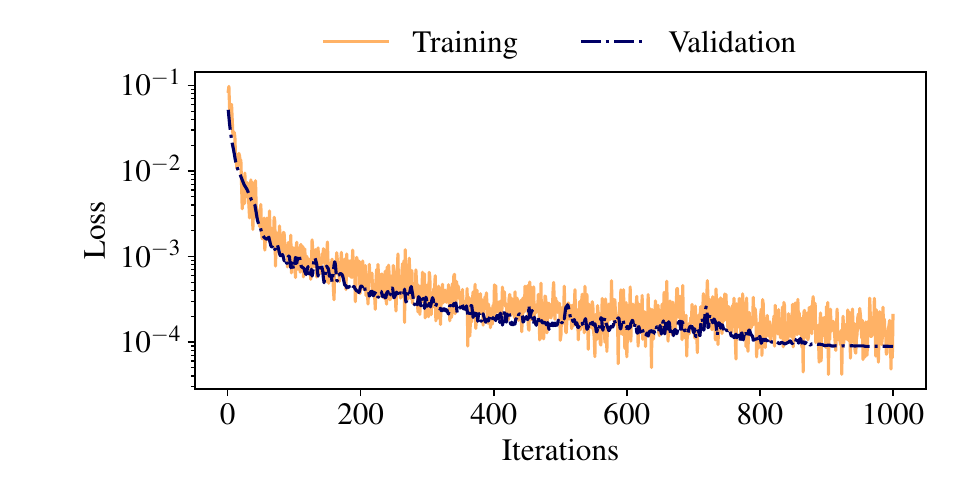} 
        \vspace{-0.3cm}
        \caption{Loss evolution of the LS-Net method on the training and validation datasets for the one-dimensional transmission problem  described in Subsection \ref{One-Dimensional Problems}.}\label{fig_1D_Loss}
    
\end{figure}
\noindent  Furthermore, Figure~\ref{fig_Distribution_1D} shows the relative \( L^2 \)-errors (in \%) of the approximate solutions and their first derivatives, evaluated over \( 10^3 \) randomly sampled parameters, by solving the corresponding least-squares problems both before and after training the NN. As illustrated, training substantially reduces the relative $L^2$-errors. In particular, the average relative errors of the approximate solutions and their fluxes decrease significantly from approximately the range \( (10^1\%, 10^3\%) \) to \( (10^{-5}\%, 10^{-3}\%) \) and \( (5\%, 10^3\%) \) to \( (5\times10^{-4}\%, 10^{-3}\%) \), respectively.

\begin{figure}[H]
    \centering
        \includegraphics[width=0.75\textwidth]{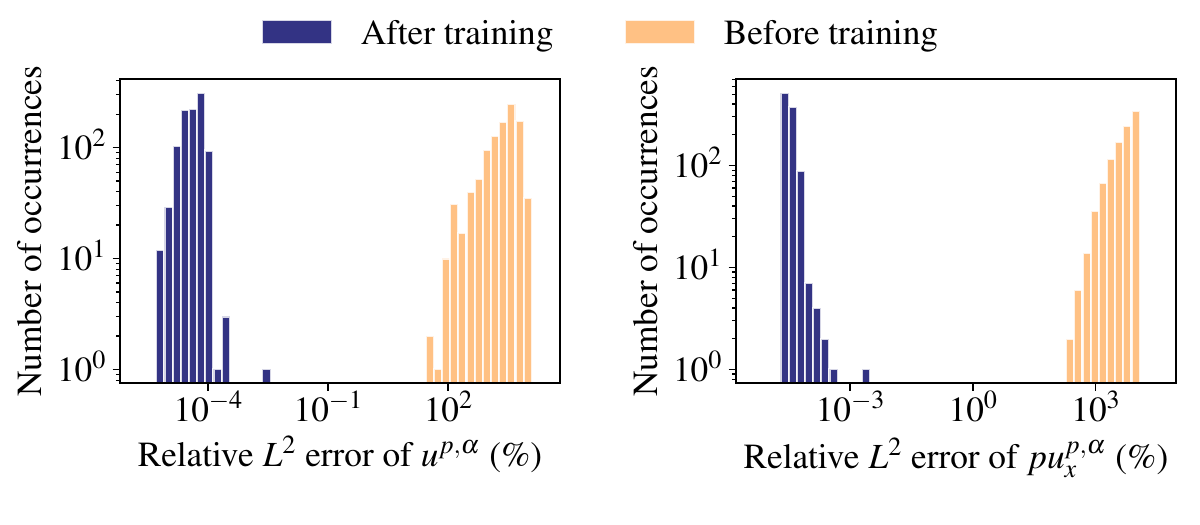} 
        \vspace{-0.5cm}
        \caption{Distribution of the relative $L^2$-errors (in $\%$) for the LS-ReCoNN solution and its flux of the one-dimensional transmission problem.  The results are obtained by solving the corresponding LS problem before and after training the NN.}\label{fig_Distribution_1D}
    
\end{figure}

\paragraph{Comparison to single-instance problem:} 
To evaluate the efficacy of our parametric formulation, we compare its performance against a non-parametric model, which we interpret within our framework by defining the parameter measure as a Dirac delta distribution, $\mu = \delta(p - p_0)$. In this single-instance case, the training is conducted with a batch size of one for a fixed parameter set $p_0 = (1, 4, 0.2, 30, 49)$. To ensure a fair comparison, both models are trained for exactly 1,000 epochs. As shown in Figure \ref{fig_sample_1D} for the one-dimensional transmission problem, the parametric approach is more accurate. Specifically, the absolute solution error and the flux error for the parametric case are on the order of $10^{-6}$, whereas the non-parametric errors are ten times higher, at $10^{-5}$. Beyond the improvement in accuracy, the most significant advantage of the parametric approach is its efficiency. In this method, the NN is trained only once. After the training phase, we can find solutions for any new parameter values almost instantly by using a simple LS solver. In contrast, the non-parametric approach requires the model to be retrained from scratch for every new problem or parameter change, which is computationally expensive and slow.

% \noindent Figures ~\ref {fig_sample_1D_1} and ~\ref {fig_sample_1D_2} present the exact solutions and the corresponding LS-ReCoNN approximations for two parameter sets, \( p = (1, 0.1, 3, 0.3, 0.5) \) and \( p = (10, 45, 2, 30, 15) \), respectively. The results indicate that the method accurately approximates the solutions and fluxes. 

\begin{figure}[H]
    \centering

        \includegraphics[width=0.9\textwidth]{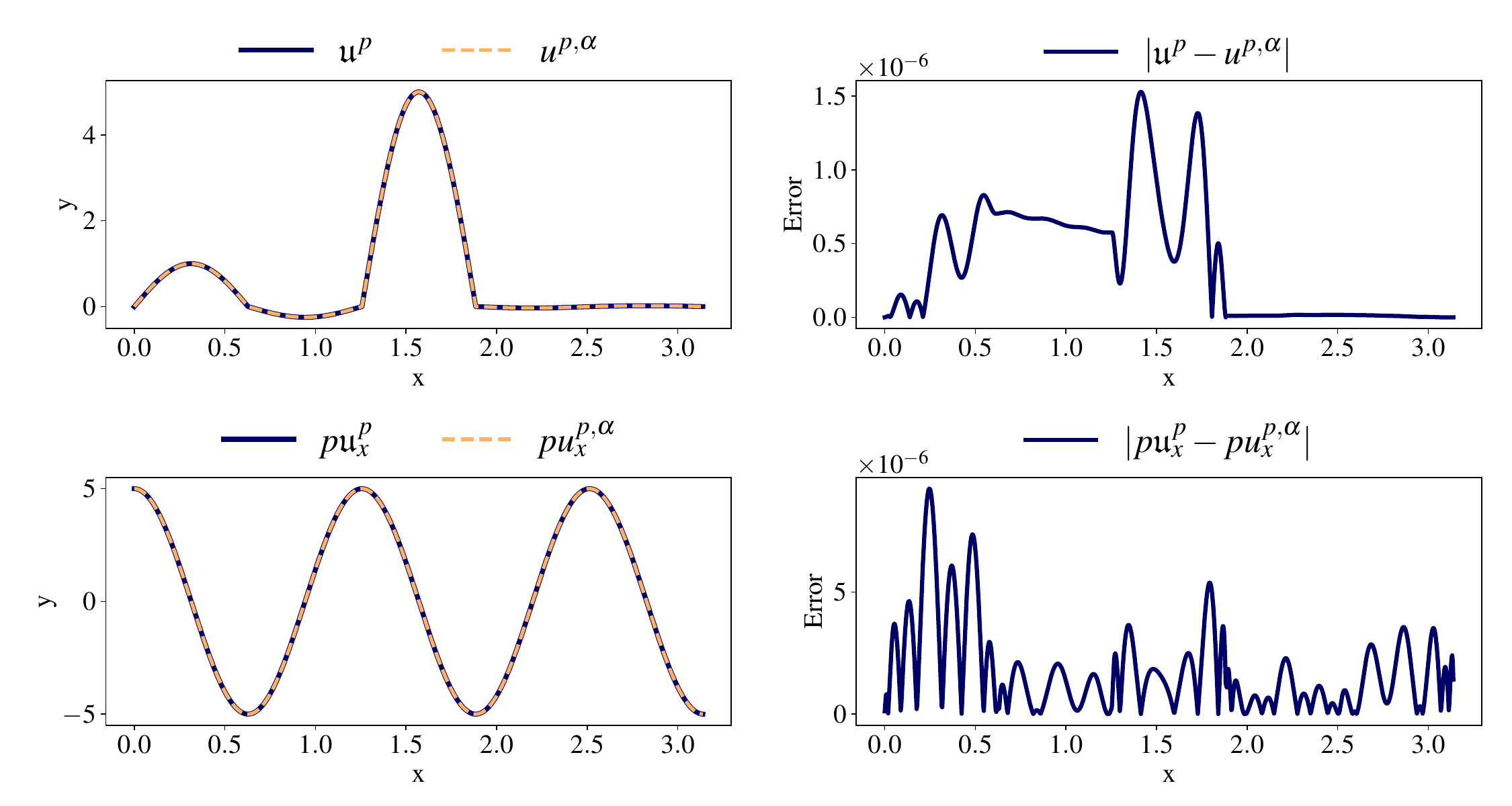}
        \label{fig:high_freq}

\caption{Comparison of the exact and the LS-ReCoNN solutions for the 1D transmission problem with $p_0 = (1, 4, 0.2, 30, 49)$ after 1000 epochs. These results are obtained by the parametric approach that uses a pre-trained network and an LS solver.}
    \label{fig_sample_1D}
\end{figure}
\begin{figure}[H]
    \centering  
        \includegraphics[width=0.9\textwidth]{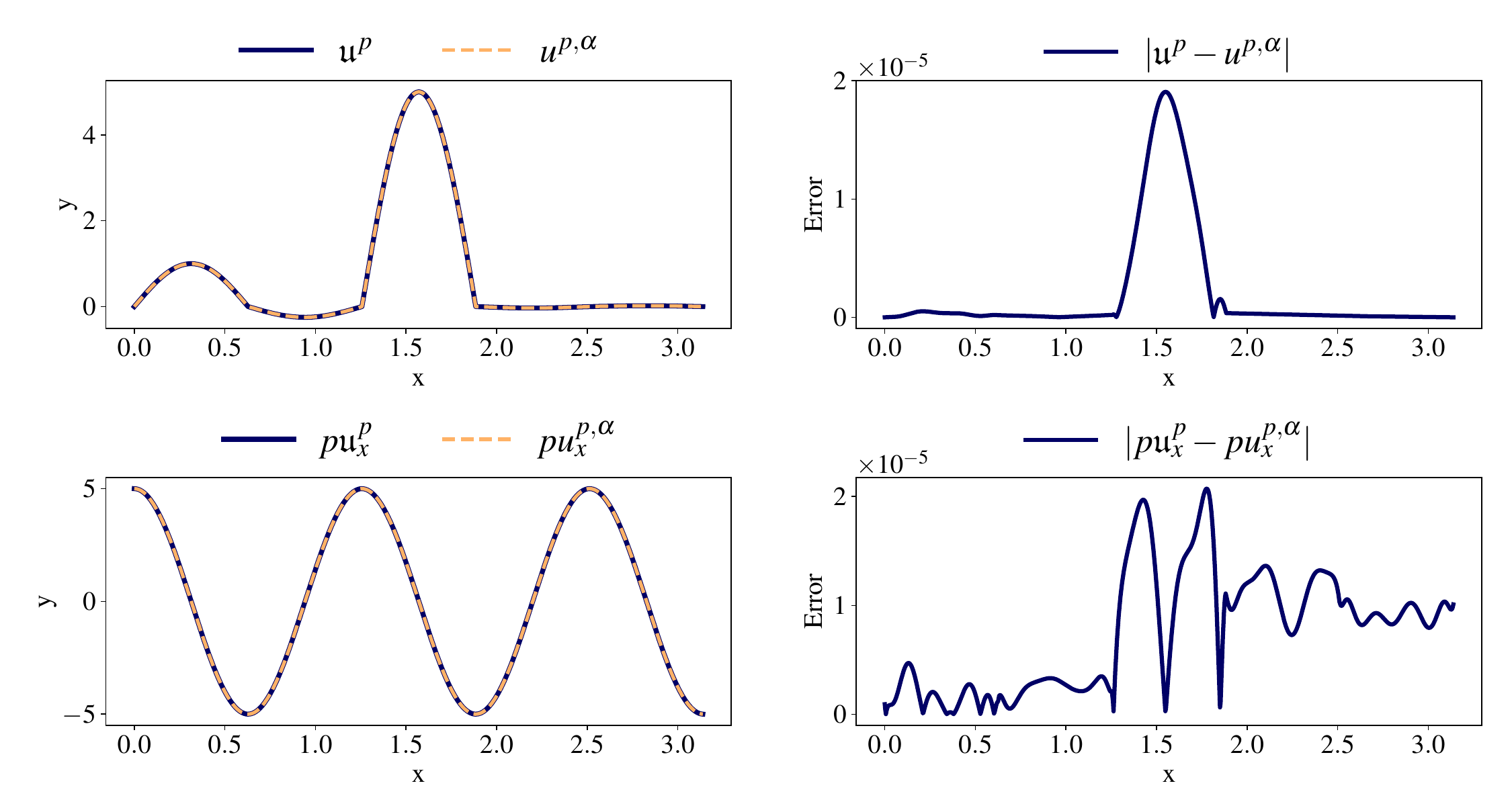}
        \label{fig:low_freq}
    \caption{Comparison of the exact and the LS-ReCoNN solutions for the 1D transmission problem with $p_0 = (1, 4, 0.2, 30, 49)$ after 1000 epochs. These results are obtained by the non-parametric approach that requires full retraining.}
    \label{fig_sample_1D}
\end{figure}

% \begin{figure}[H]
%     \centering
%     \begin{subfigure}[b]{1\textwidth}
%         \centering
%         \includegraphics[width=0.9\textwidth]{u_subplot_1D_2.pdf}
%         \caption{Parametric training. }
%         \label{fig:high_freq}
%     \end{subfigure}
%     \vspace{1cm}
%     \begin{subfigure}[b]{1\textwidth}
%         \centering
        
%         \includegraphics[width=0.9\textwidth]{u_subplot_1D_non_parametric.pdf}
%         \caption{Non-parametric training}
%         \label{fig:low_freq}
%     \end{subfigure}
%     \vspace{-1cm}
%     \caption{Comparison of exact and LS-ReCoNN solutions for the 1D transmission problem with $p_0 = (1, 4, 0.2, 30, 49)$ after 1000 epochs. The parametric approach (a) uses a pre-trained network and an LS solver, whereas the non-parametric approach (b) requires full retraining.}
%     \label{fig_sample_1D}
% \end{figure}

\subsection{Two-Dimensional Problems}\label{Two-Dimensional Problems}
\subsubsection{$2 \times 2$ Material Configuration}\label{2 by 2 Material Configuration}
Consider the two-dimensional parametric transmission problem described by equation \eqref{eq2_strong}, defined on the square domain \( \Omega = (-1, 1)^2 \). The domain \( \Omega \) is partitioned into four non-overlapping square subdomains \( \Omega_i \) for \( i = 1, \ldots, 4 \), defined as follows
\begin{align}
    \Omega_1 = (-1, 0) \times (-1, 0), \quad
    \Omega_2 = (0, 1) \times (-1, 0),\quad
    \Omega_3 = (-1, 0) \times (0, 1), \quad
    \Omega_4 = (0, 1) \times (0, 1).  
\end{align}
The geometric configuration of the domain and subdomains is illustrated in Figure~\ref{fig_2D_geometry_Example}. 

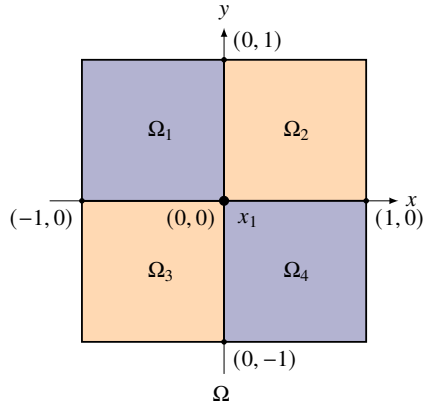
\begin{figure}[H]
    \centering
    \scalebox{0.85}{\definecolor{My_blue}{RGB}{0,0,102}
\definecolor{My_Orange}{RGB}{255, 178, 102}
\begin{tikzpicture}[
punkt/.style={
   rectangle,
   rounded corners,
   draw=black, thick,
   text width=2em,
   minimum height=1em,
   text centered}
]
    % Draw the square domain Omega
    \filldraw[fill=My_Orange!50!white, thick] (-2.2,-2.2) rectangle (0,0);
    %\draw[dashed] (-2.1,-2.1) rectangle (-0.1,-0.1);

    \filldraw[fill=My_blue!30!white, thick] (-2.2,0) rectangle (0,2.2);
    %\draw[dashed] (-2.1,0.1) rectangle (-0.1,2.1);

    \filldraw[fill=My_Orange!50!white, thick] (0,0) rectangle (2.2,2.2);
    %\draw[dashed] (0.1,0.1) rectangle (2.1,2.1);

    \filldraw[fill=My_blue!30!white, thick] (0, -2.2) rectangle (2.2, 0);
    %\draw[dashed] (0.1, -2.1) rectangle (2.1, 0.1);
    
    % Labels for the axes
    \node at (2.7, 0) [right] {$x$};
    \node at (0, 2.7) [above] {$y$};

    % Axes
    \draw[->] (-2.7,0) -- (2.7,0) node[anchor=north west] {};
    \draw[->] (0,-2.7) -- (0,2.7) node[anchor=south east] {};

    \filldraw (-2.2, 0) circle (1pt) node[anchor=north east] {$(-1, 0)$};
    \filldraw (2.2, 0) circle (1pt) node[anchor=north west] {$(1, 0)$};
    \filldraw (0, -2.2) circle (1pt) node[anchor=north west] {$(0, -1)$};
    \filldraw (0, 2.2) circle (1pt) node[anchor=south west] {$(0, 1)$};

    \filldraw (0, 0) circle (2pt) node[anchor=north east] {$(0, 0)$};

\node at (0.8, 1.1) [right] {$\Omega_2$};
\node at (-1.3, 1.1) [right] {$\Omega_1$};
\node at (0.8, -1.1) [right] {$\Omega_4$};
\node at (-1.3, -1.1) [right] {$\Omega_3$};
\node at (-0.3, -3) [right] {$\Omega$};

\node at (0.1, -0.3) [right] {$x_1$};

% \node at (0, 1.1) [right] {$\gamma_{3,4}$};
% \node at (0, -1.1) [right] {$\gamma_{1,2}$};
% \node at (0.8, 0.2) [right] {$\gamma_{2,4}$};
% \node at (-1.3, 0.2) [right] {$\gamma_{1,3}$};

% % Add two arrows above gamma_{2,3} at x = -1
% \draw[->, thick] (0.05, -1.6) -- (0.7, -1.6);
% \draw[->, thick] (-0.05, -1.6) -- (-0.7, -1.6);

% \node[above] at (0.8, -2.2) {\(\mathbf{n}_{12}^+\)};
% \node[above] at (-0.7, -2.2) {\(\mathbf{n}_{12}^-\)};

\end{tikzpicture}}
    \caption{Geometry of the two-dimensional domain \( \Omega \) with a $2 \times 2$ material configuration.}
    \label{fig_2D_geometry_Example}
\end{figure}

\noindent
The parameter function \( p: \Omega \to \mathbb{R} \) is defined as a piecewise-constant function, taking distinct constant values on each subdomain \( \Omega_i \), i.e., $p(x) = p_i$ for $x \in \Omega_i$ and $i = 1, \ldots, 4$, where \( p_i \in [1, 10] \). Furthermore, the right-hand side function is defined as
\begin{equation}\label{eq2_rhs_2D}
    l^p(x, y) = p \cdot \sqrt{r} \left( \cos\left(\frac{\theta}{2}\right) -3 \sin\left(\frac{\theta}{2}\right)  \right), \quad (x, y) \in \Omega,
\end{equation}
where \( \theta = \arctan(y/x) \) and \( r = \sqrt{x^2 + y^2} \) denote the angular and radial components in polar coordinates, respectively. The right-hand side in~\eqref{eq2_rhs_2D}, motivated by the construction in~\cite{brenner2008mathematical}, is specifically chosen to enhance the manifestation of the solution’s singular behavior.

\noindent The LS-ReCoNN solution \( \mathbf{u}^{p, \alpha} \) is constructed according to equation~\eqref{eq2_LS-ReCoNN_solution_2D}, using an NN with four layers, each consisting of 75 neurons with $N_1 = 25$ and $N_2 = 50$ in the last layer. The training process minimizes the discretized loss function \eqref{eq2_loss_D} with $\Theta = 200$. During each training epoch, the model is evaluated on 200 randomly generated parameter samples, where each parameter \( p_i \) is independently selected from the interval \( [1, 10] \), as detailed in Appendix \ref{Parameter Sampling Details}. We employ a stochastic sampling rule to generate collocation points.  Specifically, in each training epoch, we draw a batch consisting of $40 \times 40$ points inside the domain, together with 40 points along each interface.  Validation collocation points are generated using the same strategy, with three additional points added in each axis. Validation points are fixed during training. The network is trained over 50,000 iterations using the Adam optimizer, with a learning rate that decays linearly from \( 10^{-2} \) to \( 10^{-5} \).

{\noindent For the FE eigenvalue solver, the angular domain $(0, 2\pi)$ is partitioned into four equal elements. The approximation space is constructed using piecewise polynomials of degree up to four, which are supplemented by global hat functions spanning neighboring elements to enforce $C^{0}$ continuity across the interfaces. This hierarchical construction yields a total of sixteen basis functions per singular point. Numerical integration of the stiffness and mass matrices is performed using a 5-point Gauss-Legendre quadrature rule, ensuring exact integration for the resulting polynomial products. A comprehensive description of the basis function construction and the implementation of the eigenvalue algorithm is provided in Appendix D.}

\noindent We employ the FEM to construct accurate reference solutions for selected parameter configurations used in the comparison study. The FEM solver discretizes the domain using a uniform mesh consisting of \(100 \times 100\) square elements. In each element, the solution is approximated using linear basis functions.

\noindent Figure~\ref{fig_2D_Loss_parametric} illustrates the evolution of the loss values for the LS-ReCoNN method on both the training and validation datasets throughout the training process. The close alignment of the two curves indicates good generalization performance and suggests that the model does not suffer from overfitting.

\begin{figure}[H]
    \centering
        \includegraphics[width=0.6\textwidth]{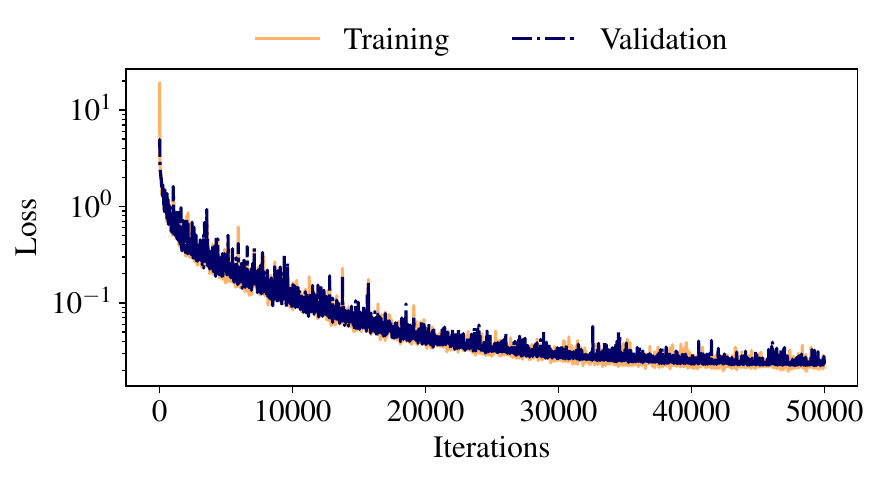} 
        \caption{Loss evolution of the LS-Net method on the training and validation datasets for the two-dimensional parametric transmission problem described in Subsection \ref{2 by 2 Material Configuration}.}\label{fig_2D_Loss_parametric}
    
\end{figure}

\noindent Figure~\ref{fig_Distribution_2D} illustrates the distribution of relative \( L^2 \)-errors (in \%) for 200 randomly sampled parametric problems, both in the solutions and their flux, obtained by solving the corresponding LS problems before and after training the network. Since the LS-ReCoNN is discretization-invariant, we utilized this feature to generate more accurate results. Specifically, we solved the final LS system using $100 \times 100$ collocation points inside the domain and  100 points along interfaces. For this evaluation, we used a midpoint quadrature rule—dividing the domain into uniform squares and evaluating the solution at the center of each. As evident from the figure, training significantly reduces the relative errors. In particular, the average relative \( L^2 \)-error of the approximate solutions decreases significantly, from approximately \( 100\% \) to a range of \( (0.1\%, 1\%) \). Also, the error associated with the gradient is reduced from the interval \((60\%, 100\%) \) to approximately \((1\%, 4\%) \). These results demonstrate the effectiveness of the LS-ReCoNN framework in learning accurate solution representations and their gradients. 

\begin{figure}[H]
    \centering
        \includegraphics[width=0.75\textwidth]{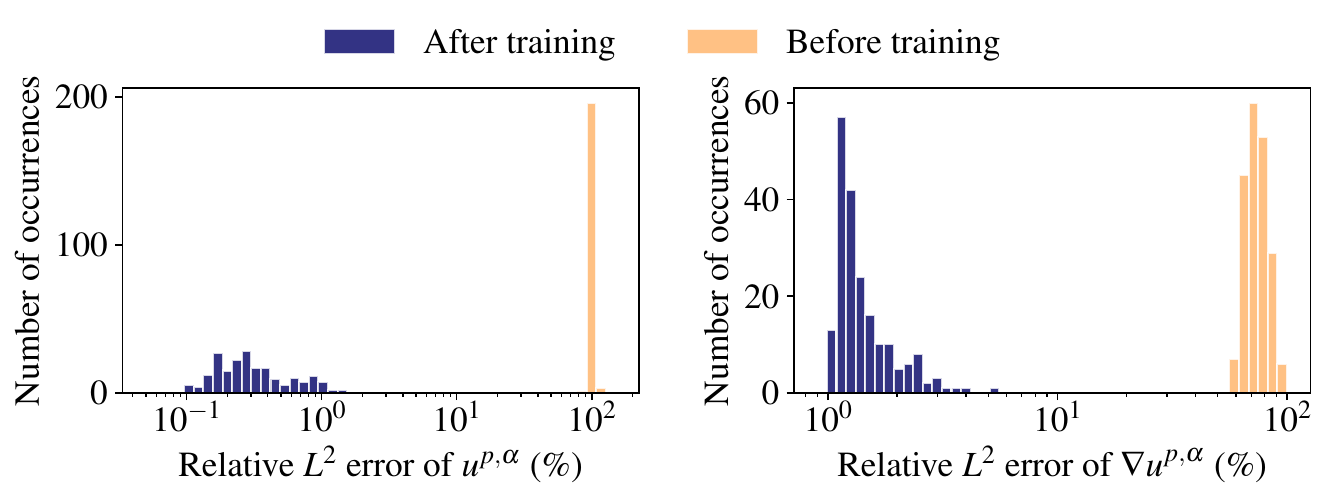} 
        \caption{Distribution of the relative $L^2$-errors (in $\%$) for the LS-ReCoNN solution and its first derivative of the two-dimensional transmission problem described in Subsection \ref{2 by 2 Material Configuration}.  The results are obtained by solving the corresponding LS problem before and after training the NN.}\label{fig_Distribution_2D}
    
\end{figure}

\noindent To verify that our numerical framework is accurate, we compared the FE eigenvalue solver's results against semi-analytic solutions based on the method in \cite{taylor2024regularity}. Figure \ref{fig_Distribution_eigens} presents the relative $L^2$-errors (in \%) for both the eigenvalues and eigenfunctions. The negligible error margins reported in this figure confirm that the FE discretization is highly reliable and provides a precise basis for the singular functions.

% \begin{figure}[H]
%         \centering
%         \includegraphics[width=\textwidth]{u_subplot_1D_1.pdf}
%     \caption{The exact and LS-ReCoNN solutions, along with their fluxes and the corresponding absolute errors, are presented for the one-dimensional transmission problem corresponding parameter set $p = (1, 0.1, 3, 0.3, 0.5)$.}
%     \label{fig_sample_1D_1}
% \end{figure}
% \begin{figure}[H]
%     \vspace{1cm}
%         \vspace{1cm}
%         \includegraphics[width=\textwidth]{u_subplot_1D_2.pdf}
%     \caption{The exact and LS-ReCoNN solutions, along with their fluxes and the corresponding absolute errors, are presented for the one-dimensional transmission problem \ref{One-Dimensional Problems} corresponding to parameter set $p = (10, 45, 2, 30, 15)$.}
%     \label{fig_sample_1D_2}
% \end{figure}

%%%%%%%%%%%%%%%%%%%%%%%%%%%%%%%%%%%%%%%%%%%%%%%%%%%%%%%%%%%%%%
%%%%%%%%%%%%%%%%%%%%%%%%%%%%%%%%%%%%%%%%%%%%%%%%%%%%%%%%%%%%%%
%%%%%%%%%%%%%%%%%%%%%%%%%%%%%%%%%%%%%%%%%%%%%%%%%%%%%%%%%%%%

\noindent Figure~\ref{fig_2D_solution_Loss_parametric} compares the FEM and LS-ReCoNN solutions for the parameter configuration \( p_1 = 2 \), \( p_2 = 10 \), \( p_3 = 7 \), and \( p_4 = 1 \). The corresponding absolute error is also shown, demonstrating strong agreement between the two approaches. \noindent Figure~\ref{fig_2D_gradient_Loss_parametric} shows the fluxes computed from both the FEM and LS-ReCoNN solutions, including their vector fields and the corresponding absolute error. These results further demonstrate the accuracy and consistency of the LS-ReCoNN approximation.  Figure~\ref{fig_2D_Error_Loss_parametric} shows a zoomed-in view of the absolute errors in the fluxes of the LS-ReCoNN and FEM solutions near the origin, allowing for a closer examination of the solution behavior at the singularity point. To better interpret the results, we emphasize that the FEM is inherently incapable of accurately capturing the singular behavior at the origin. Therefore, the reference FEM solution exhibits unbounded error near the singularity located at the origin. However, the FEM solution still represents the best possible approximation in the energy norm within the selected discretized space, even though it does not accurately resolve the singularity. 
In contrast, other standard NN-based approaches typically attempt to approximate the singular structure directly. Such attempts in existing literature can introduce instability near singular points and often lead to oscillatory behavior or the Gibbs phenomenon. This contrast underscores a fundamental distinction between traditional FEM and typical NN-based methods found in other studies when dealing with singularities.
%In contrast, the NN-based approaches explicitly attempt to approximate the singular structure. Such attempts can introduce instability near singular points and often lead to oscillatory behavior or the Gibbs phenomenon. This contrast underscores a fundamental distinction between traditional FEM and NN-based methods in the presence of singularities.

\begin{figure}[H]
    \centering
        \includegraphics[width=0.75\textwidth]{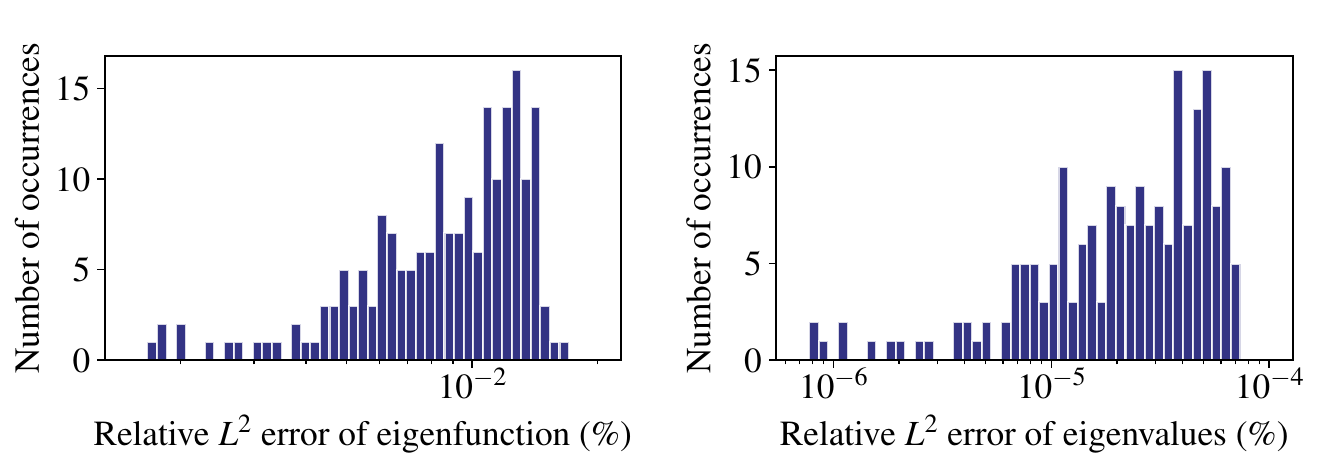} 
        \caption{Relative $L^2$-errors (in \%) of eigenfunctions and eigenvalues of the two-dimensional transmission problem described in Subsection \ref{2 by 2 Material Configuration}. Numerical results were validated against semi-analytic solutions.  }\label{fig_Distribution_eigens}
    
\end{figure}

%%%%%%%%%%%%%%%%%%%%%%%%%%%%%%%%%%%%%%%%%%%%%
%%%%%%%%%%%%%%%%%%%%%%%%%%%%%%%%%%%%%%%%%%%%%
%%%%%%%%%%%%%%%%%%%%%%%%%%%%%%%%%%%%%%%%%%%%%
%%%%%%%%%%%%%%%%%%%%%%%%%%%%%%%%%%%%%%%%%%%%%
%%%%% Square Domain with $5 \times 5$ %%%%%%%
%%%%%%%%%%%%%%%%%%%%%%%%%%%%%%%%%%%%%%%%%%%%%
%%%%%%%%%%%%%%%%%%%%%%%%%%%%%%%%%%%%%%%%%%%%%
%%%%%%%%%%%%%%%%%%%%%%%%%%%%%%%%%%%%%%%%%%%%%
%%%%%%%%%%%%%%%%%%%%%%%%%%%%%%%%%%%%%%%%%%%%%

\subsubsection{$4 \times 4$ Material Configuration}\label{4 by 4 Material Configuration}

Consider the two-dimensional parametric transmission problem introduced in Subsection~\ref{2 by 2 Material Configuration}. Let the domain \( \Omega \) be partitioned into 16 non-overlapping square subdomains \( \Omega_i \), arranged in a uniform \( 4 \times 4 \) grid, where \( i = 1, \ldots, 16 \). The geometric configuration of this partition is depicted in Figure~\ref{fig_2D_geometry_Example_5x5}.

\noindent
The parameter function \( p: \Omega \to \mathbb{R} \) is defined as a piecewise constant function, taking distinct values \( p_i \in [1, 10] \) on each subdomain \( \Omega_i \), i.e., $p(x) = p_i$ for  $x \in \Omega_i$ and $i = 1, \ldots, 16$, where $p_i \in [1, 10]$. The right-hand side 
\begin{figure}[H]
    \centering
    \begin{subfigure}[b]{0.85\textwidth}
        \centering
        \includegraphics[width=\textwidth]{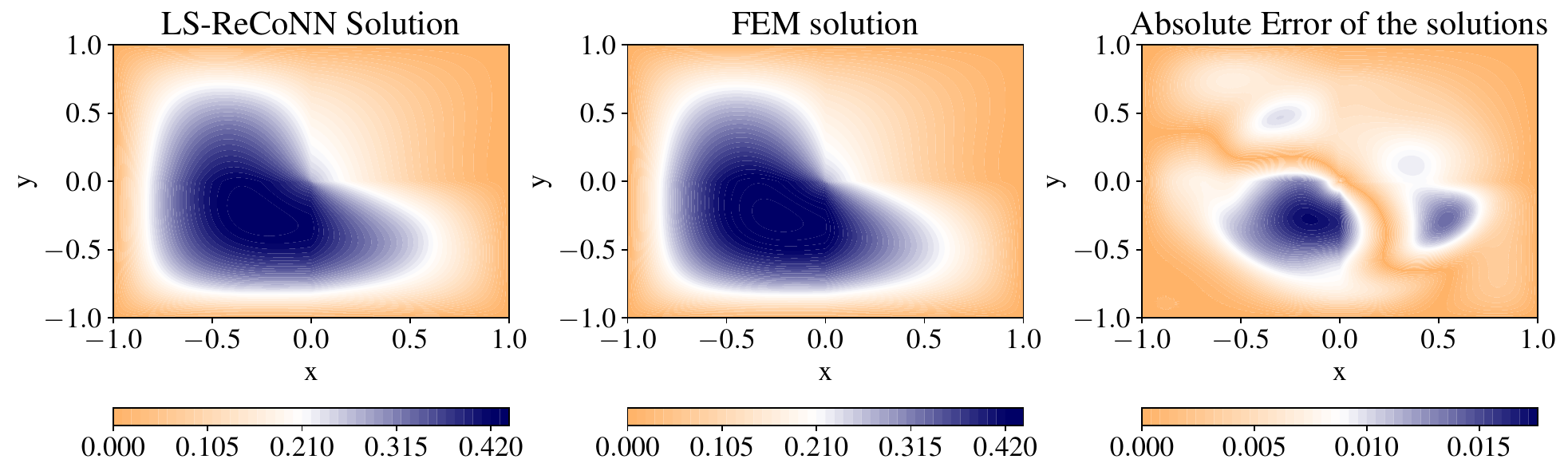}
        \caption{Comparison of LS-ReCoNN (left) and FEM (middle) solutions and their absolute error (right).}
        \label{fig_2D_solution_Loss_parametric}
    \end{subfigure}
    
    \vspace{1cm}

    \begin{subfigure}[b]{0.85\textwidth}
        \centering
        \includegraphics[width=\textwidth]{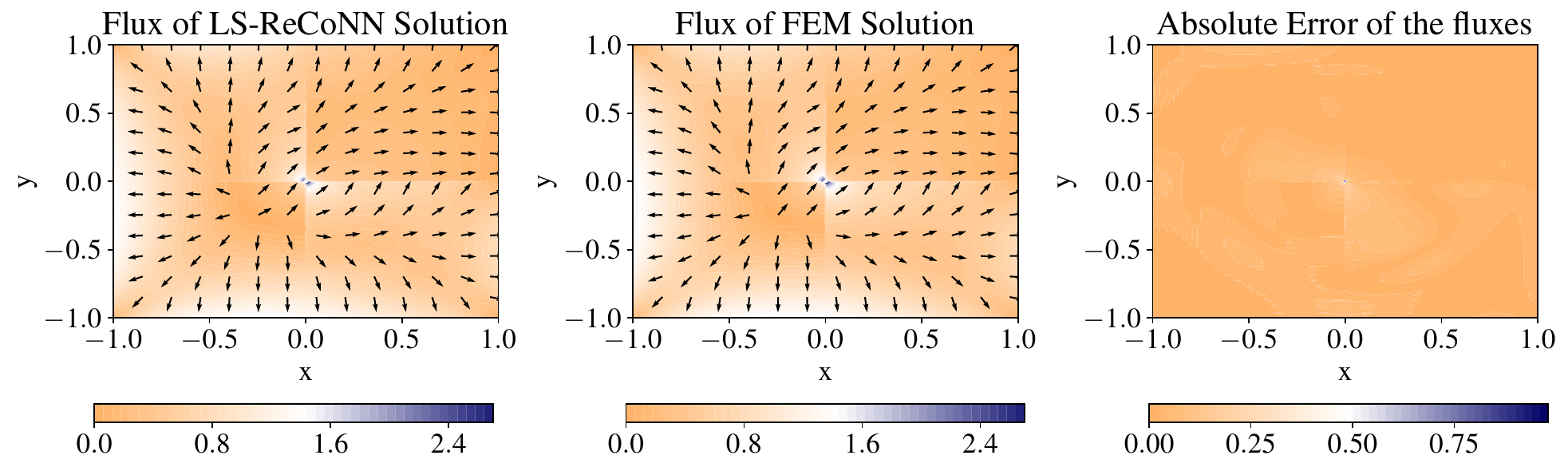}
        \caption{Comparison of the LS-ReCoNN (left) and FEM (middle) fluxes and their absolute error (right). Note that, for clarity of presentation, the flux arrows are normalized to unit length.}
        \label{fig_2D_gradient_Loss_parametric}
    \end{subfigure}
    
    \vspace{1cm}
    
    \begin{subfigure}[b]{0.85\textwidth}
        \centering
        \includegraphics[width=\textwidth]{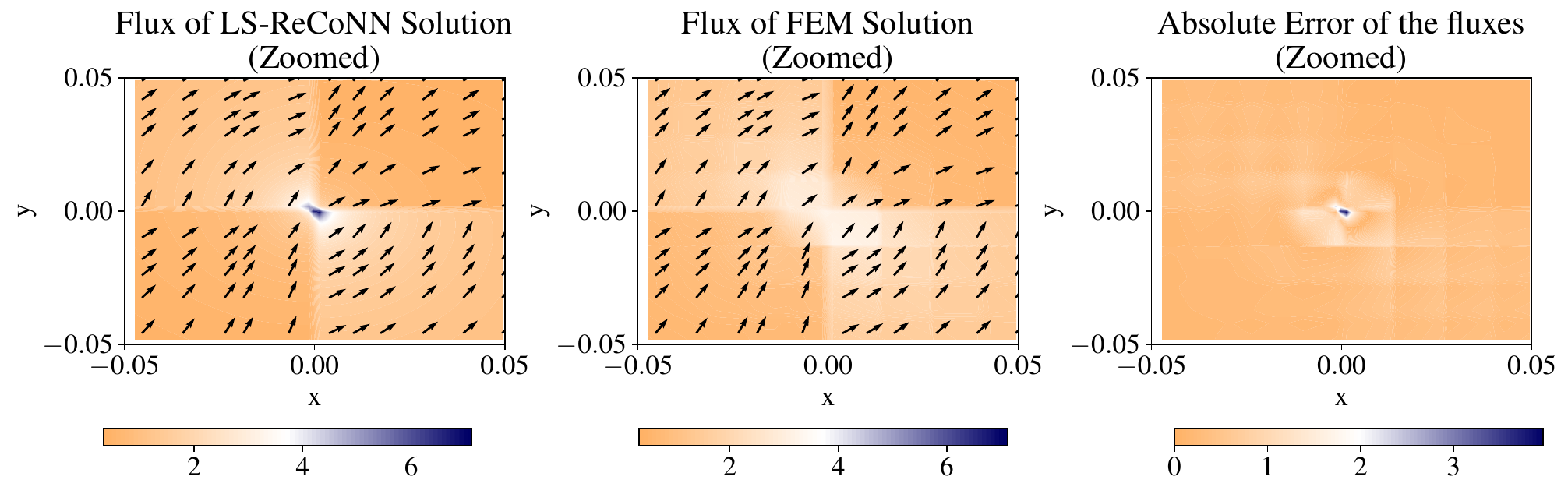}
        \caption{Comparison of LS-ReCoNN (left) and FEM (middle) solutions, and their absolute error (right) near the singularity point with zoomed-in view. Note that, for clarity of presentation, the flux arrows are normalized to unit length.}
        \label{fig_2D_Error_Loss_parametric}
    \end{subfigure}
    
    \caption{Comparison of FEM and LS-ReCoNN solutions for the two-dimensional parametric transmission problem described in Subsection \ref{2 by 2 Material Configuration} after 50,000 training iterations. We show the approximate solutions, their flux magnitudes and vector fields, and the corresponding absolute errors.}
    \label{fig:combined}
\end{figure}
\noindent function is given as
\begin{equation}\label{eq2_rhs_2D_3by3}
    l^p(x, y) = \sum_{i = 1}^9 p \cdot \sqrt{r_i} \left(\cos\left(\frac{\theta_i}{2}\right) - 3 \sin\left(\frac{\theta_i}{2}\right) \right), \quad (x, y) \in \Omega,
\end{equation}
where \( \theta_i \) and \( r_i \) denote the angular and radial components centered at the singular point $x_i$, respectively. 

\noindent The approximation \( \mathbf{u}^{p,\alpha} \) produced by LS-ReCoNN follows the representation in~\eqref{eq2_LS-ReCoNN_solution_2D}. 
The NN architecture employed in this experiment consists of five layers. The hidden layers contain 180 neurons each, while the fifth layer comprises 270 neurons with $N_1 = 90$ and $N_2 = 180$. Training is carried out by minimizing the discretized loss functional~\eqref{eq2_loss_D} with $\Theta = 10$.  At each epoch, 50 parameter samples are drawn independently from \([1,10]\), as detailed in Appendix \ref{Parameter Sampling Details}, and the model is updated based on them.  Collocation points are chosen using a first-order
\begin{figure}[H]
    \centering
    \scalebox{1}{\definecolor{My_blue}{RGB}{0,0,102}
\definecolor{My_Orange}{RGB}{255, 178, 102}

\begin{tikzpicture}[
    scale=2.5,
    every node/.style={font=\small},
    punkt/.style={
       rectangle,
       rounded corners,
       draw=black, thick,
       text width=2em,
       minimum height=1em,
       text centered}
]

% Draw the 4x4 grid and label each subdomain
\newcounter{domain}
\setcounter{domain}{1}

\foreach \j in {3,...,0} {
    \foreach \i in {0,...,3} {
        \pgfmathsetmacro{\x}{-1 + 0.5*\i}
        \pgfmathsetmacro{\y}{-1 + 0.5*\j}
        \pgfmathsetmacro{\xc}{\x + 0.25}
        \pgfmathsetmacro{\yc}{\y + 0.25}
        \pgfmathparse{mod(\i+\j,2)==0 ? "My_Orange!50!white" : "My_blue!30!white"}
        \edef\fillcolor{\pgfmathresult}
        \filldraw[fill=\fillcolor, thick] (\x,\y) rectangle ({\x+0.5}, {\y+0.5});
        \node at (\xc, \yc) {$\Omega_{\arabic{domain}}$};
        \stepcounter{domain}
    }
}

% Label the internal grid points (3x3 = 9 points)

\setcounter{point}{1}

\foreach \j in {3,...,1} {
    \foreach \i in {1,...,3} {
        \pgfmathsetmacro{\x}{-1 + 0.5*\i}
        \pgfmathsetmacro{\y}{-1 + 0.5*\j}
        
        % Draw a small dot at the grid point
        \filldraw (\x, \y) circle (0.5pt);
        
        % Label shifted by (0.1, -0.1)
        \node at ({\x + 0.12}, {\y - 0.1}) {$x_{\arabic{point}}$};
        
        \stepcounter{point}
    }
}

% Corners of the domain
\filldraw (-1, -1) circle (0.3pt) node[anchor=north east] {$(-1, -1)$};
\filldraw (1, 1) circle (0.3pt) node[anchor=south west] {$(1, 1)$};
\filldraw (1, -1) circle (0.3pt) node[anchor=north west] {$(1, -1)$};
\filldraw (-1, 1) circle (0.3pt) node[anchor=south east] {$(-1, 1)$};

% Label domain
\node at (0, -1.3) {$\Omega$};

\end{tikzpicture}}
    \caption{Geometry of the two-dimensional domain \( \Omega \) with a $4 \times 4$ material configuration for the transmission problem.}
    \label{fig_2D_geometry_Example_5x5}
\end{figure}
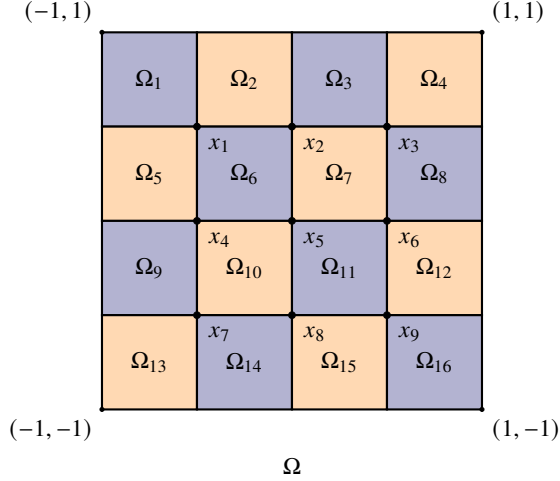
\noindent  stochastic rule: each batch consists of \(80\times80\) points placed in the interior of the computational domain, complemented by $80$ additional points along every interface. 
For validation, collocation points are generated in the same manner, except that three extra points are added along each axis; these validation sets remain fixed during training.  The optimization is performed with the Adam algorithm over $2\times10^4$ iterations, employing a linearly decaying learning rate that starts at \(10^{-1}\) and decreases to \(5\times10^{-3}\).

{\noindent For the computation of the singular basis functions at the nine internal singular points, the FE eigenvalue solver is implemented as described in Subsection \ref{2 by 2 Material Configuration}. Consistent with the previous example, we utilize the hierarchical construction of sixteen basis functions per singular point, the details of which are provided in Appendix D.}

\noindent Figure~\ref{fig_2D_Loss_parametric_4by4} illustrates the evolution of the loss values for the LS-ReCoNN method on both the training and validation datasets throughout the training process. The close alignment of the two curves indicates good generalization performance and suggests that the model does not suffer from overfitting.

\begin{figure}[H]
    \centering
        \includegraphics[width=0.6\textwidth]{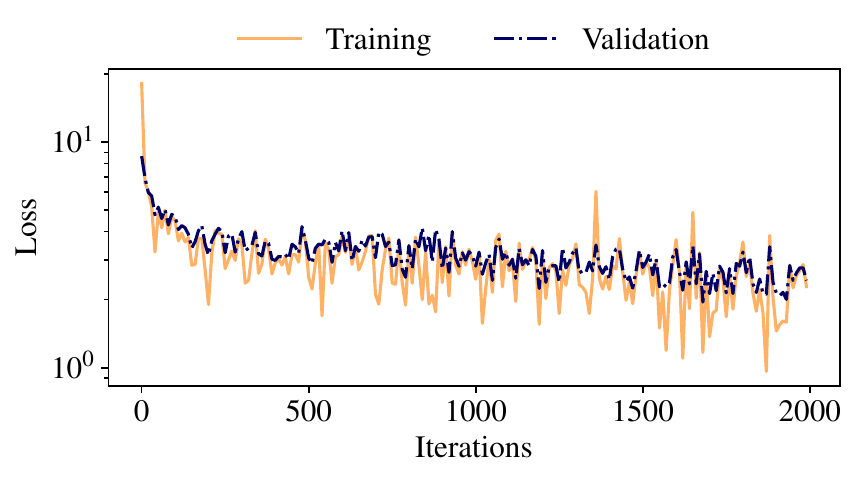} 
        \caption{Loss evolution of the LS-Net method on the training and validation datasets for the two-dimensional parametric transmission problem described in Subsection \ref{4 by 4 Material Configuration}.}\label{fig_2D_Loss_parametric_4by4}
    
\end{figure}

\noindent Figure~\ref{fig_Distribution_2D_4by4} presents the distribution of relative \( L^2 \)-errors (in \%) computed for 200 randomly chosen parametric test cases. As in the previous example, we solved the final system using $200 \times 200$ collocation points inside the domain and 200 collocation points along each interface with a midpoint quadrature rule. The comparison includes both the solution fields and their fluxes, obtained by solving the corresponding LS problems before and after training. As in the previous example, a FEM is employed to generate reliable reference solutions for selected parameter instances used in the evaluation. The FEM model discretizes the computational domain with a uniform grid of \(200 \times 200\) square elements, and linear shape functions are used within each element to approximate the solution. Based on Figure \ref{fig_Distribution_2D_4by4}, it is clear that the training step substantially lowers the error levels. More precisely, the mean relative \( L^2 \)-error of the approximated solutions is reduced from nearly \(10^4\%\) down to nearly \(10\%\). Similarly, the gradient error decreases markedly, moving from \(10^5\%\) before training to approximately \(10\%\) afterward. These outcomes confirm that the LS-ReCoNN approach is highly effective in capturing both the solution and its derivative information with high accuracy.

\begin{figure}[H]
    \centering
        \includegraphics[width=0.75\textwidth]{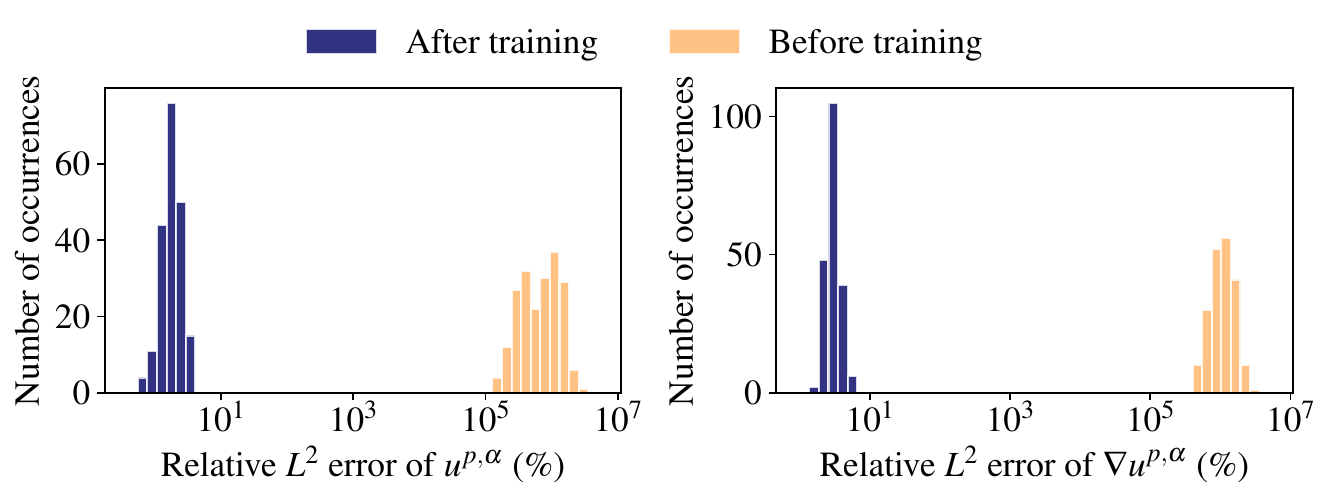} 
        \caption{Distribution of the relative $L^2$-errors (in $\%$) for the LS-ReCoNN solution and its flux of the two-dimensional transmission problem described in Subsection \ref{4 by 4 Material Configuration}.  The results are obtained by solving the corresponding LS problem before and after training the NN.}\label{fig_Distribution_2D_4by4}
    
\end{figure}

\noindent Figure~\ref{fig_2D_solution_Loss_parametric_4by4} compares the FEM and LS-ReCoNN solutions for a specific set of parameter 
\begin{equation}
    p = (1.5,\, 9,\, 2,\, 3,\, 4,\, 7,\, 2,\, 4,\, 1,\, 5,\, 8,\, 1,\, 9,\, 3,\, 5,\, 2).
\end{equation}
The corresponding absolute error is also displayed, again showing close agreement between the two methods.  Moreover, Figure~\ref{fig_2D_gradient_Loss_parametric_4by4} depicts the fluxes obtained from both approaches together with their vector fields and the absolute error. As before, the results confirm the accuracy of the LS-ReCoNN approximation.

%%%%%%%%%%%%%%%%%%%%%%%%%%%%%%%%%%%%%%%%%%%%%
%%%%%%%%%%%%%%%%%%%%%%%%%%%%%%%%%%%%%%%%%%%%%
%%%%%%%%%%%%%%%%%%%%%%%%%%%%%%%%%%%%%%%%%%%%%
%%%%%%%%%%%%%%%%%%%%%%%%%%%%%%%%%%%%%%%%%%%%%
%%%%%%%%%%%%%%% Conclusion %%%%%%%%%%%%%%%%%%
%%%%%%%%%%%%%%%%%%%%%%%%%%%%%%%%%%%%%%%%%%%%%
%%%%%%%%%%%%%%%%%%%%%%%%%%%%%%%%%%%%%%%%%%%%%
%%%%%%%%%%%%%%%%%%%%%%%%%%%%%%%%%%%%%%%%%%%%%
%%%%%%%%%%%%%%%%%%%%%%%%%%%%%%%%%%%%%%%%%%%%%

\section{Conclusion}\label{Conclusion}
In this work, we presented the LS-ReCoNN, a novel hybrid framework that integrates an eigenvalue solver, a Regularity-Conforming Neural Network (ReCONN) architecture, and a Least-Squares Neural Network (LS-Net) strategy to solve parametric transmission problems exhibiting singularities. By decomposing the solution into a principal component approximated by a deep NN and a singular component derived from an FE eigenvalue solver, the method effectively addresses the challenges of localized singularities and parametric complexity.

\noindent The LS-ReCoNN framework offers several key advantages over existing classical and deep learning methods. In terms of accuracy and stability, the method avoids Gibbs-type instabilities and achieves higher precision in regions of reduced regularity by embedding known regularity information and utilizing an FE solver for singular components. Furthermore, the proposed method provides significant computational efficiency through its separated representation of the principal component of the solution, which enables the precomputation of parameter-independent matrices. This structural advantage ensures that the cost of handling a large number of parameters $N_p$ is dominated by NN evaluations, rendering the total execution time nearly invariant to the number of parameter instances. We also proposed a loss function that provides an explicit upper bound for the energy-norm error, ensuring that the training process minimizes a physically meaningful quantity. Numerical experiments in both one- and two-dimensional settings demonstrate the accuracy and robustness of the proposed method. Indeed, the results show that the LS-ReCoNN successfully captures complex solution behaviors, such as gradient jumps and singular structures, which are typically difficult for standard NN methods. Notably, we found that training the model as a parametric problem leads to much better convergence than training a non-parametric version for a single case. In the parametric problem, the network navigates a smoother optimization and avoids the local minima that often trap non-parametric models. This results in a more robust and reliable approximation of both the solution and the flux across various parameter ranges.

\noindent This work opens several promising directions for future research. For example, extending the LS-ReCoNN to 3D problems. However, this requires improving the modeling of singular functions for complex geometries and reducing the error of eigenfunction evaluation, which are key areas for exploration. Furthermore, applying the LS-ReCoNN framework to nonlinear or time-dependent transmission problems could further broaden its applicability in the scientific and engineering aspects.

% \begin{figure}[H]
%     \centering
%         \includegraphics[width=0.75\textwidth]{Error_Eigens_4by4.pdf} 
%         \caption{Relative $L^2$-errors (in \%) of eigenfunctions and eigenvalues of the two-dimensional transmission problem described in Subsection \ref{4 by 4 Material Configuration}. Numerical results were validated against semi-analytic solutions.  }\label{fig_Distribution_eigens_4by4}
    
% \end{figure}

\begin{figure}[H]
    \centering
    \begin{subfigure}[b]{0.85\textwidth}
        \centering
        \includegraphics[width=\textwidth]{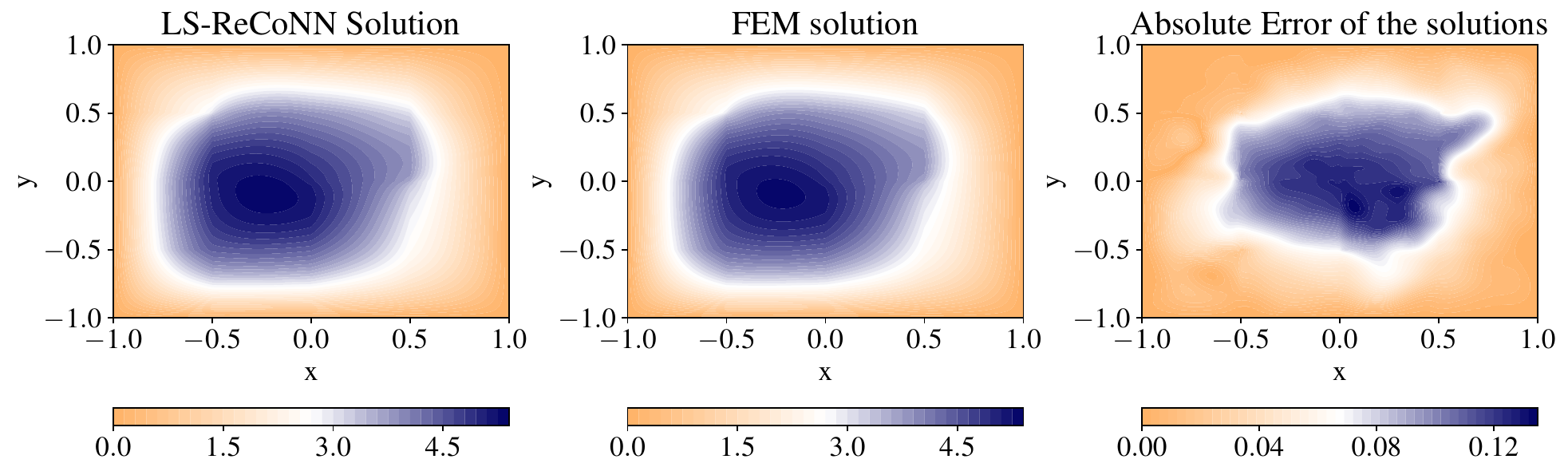}
        \caption{Comparison of LS-ReCoNN (left) and FEM (middle) solutions and their absolute error (right).}
        \label{fig_2D_solution_Loss_parametric_4by4}
    \end{subfigure}
    \vspace{1cm}

    \begin{subfigure}[b]{0.85\textwidth}
        \centering
        \includegraphics[width=\textwidth]{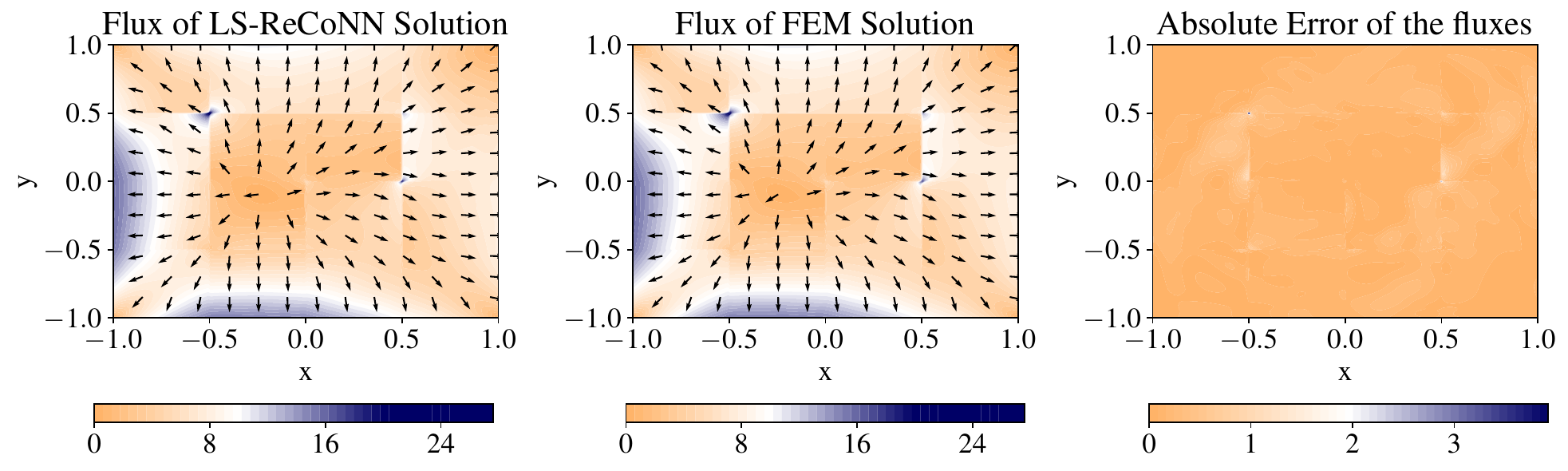}
        \caption{Comparison of the LS-ReCoNN (left) and FEM (middle) fluxes and their absolute error (right). Note that, for clarity of presentation, the flux arrows are normalized to unit length.}
        \label{fig_2D_gradient_Loss_parametric_4by4}
    \end{subfigure}
    
    \caption{Comparison of FEM and LS-ReCoNN solutions for the two-dimensional parametric transmission problem described in Subsection \ref{4 by 4 Material Configuration} after 10,000 training iterations. We show the approximate solutions, their flux magnitudes and vector fields, and the corresponding absolute errors for the parameter set $p = (1.5,\, 9,\, 2,\, 3,\, 4,\, 7,\, 2,\, 4,\, 1,\, 5,\, 8,\, 1,\, 9,\, 3,\, 5,\, 2)$.}
    \label{fig:combined_4by4}
\end{figure}

%%%%%%%%%%%%%%%%%%%%%%%%%%%%%%%%%%%%%%%%%%%%%
%%%%%%%%%%%%%%%%%%%%%%%%%%%%%%%%%%%%%%%%%%%%%
%%%%%%%%%%%%%%%%%%%%%%%%%%%%%%%%%%%%%%%%%%%%%
%%%%%%%%%%%%%%%%%%%%%%%%%%%%%%%%%%%%%%%%%%%%%
%%%%%%%%%%%%% Acknowledgments %%%%%%%%%%%%%%%
%%%%%%%%%%%%%%%%%%%%%%%%%%%%%%%%%%%%%%%%%%%%%
%%%%%%%%%%%%%%%%%%%%%%%%%%%%%%%%%%%%%%%%%%%%%
%%%%%%%%%%%%%%%%%%%%%%%%%%%%%%%%%%%%%%%%%%%%%
%%%%%%%%%%%%%%%%%%%%%%%%%%%%%%%%%%%%%%%%%%%%%

\section{Acknowledgments}\label{Acknowledgments}
This work has received funding from the following Research Projects/Grants: 
European Union’s Horizon Europe research and innovation programme under the Marie Sklodowska-Curie Action MSCA-DN-101119556 (IN-DEEP). 
PID2023-146678OB-I00 funded by MICIU/AEI /10.13039/501100011033 and by FEDER, EU;
BCAM Severo Ochoa accreditation of excellence CEX2021-001142-S funded by MICIU/AEI/10.13039/501100011033; 
Basque Government through the BERC 2022-2025 program;
BEREZ-IA (KK-2023/00012) and RUL-ET(KK-2024/00086), funded by the Basque Government through ELKARTEK;
Consolidated Research Group MATHMODE (IT1866-26) of the UPV/EHU given by the Department of Education of the Basque Government; 
BCAM-IKUR-UPV/EHU, funded by the Basque Government IKUR Strategy and by the European Union NextGenerationEU/PRTR.

%%%%%%%%%%%%%%%%%%%%%%%%%%%%%%%%%%%%%%%%%%%%%
%%%%%%%%%%%%%%%%%%%%%%%%%%%%%%%%%%%%%%%%%%%%%
%%%%%%%%%%%%%%%%%%%%%%%%%%%%%%%%%%%%%%%%%%%%%
%%%%%%%%%%%%%%%%%%%%%%%%%%%%%%%%%%%%%%%%%%%%%
%%%%%%%%%%%%%%% Appendix A %%%%%%%%%%%%%%%%%%
%%%%%%%%%%%%%%%%%%%%%%%%%%%%%%%%%%%%%%%%%%%%%
%%%%%%%%%%%%%%%%%%%%%%%%%%%%%%%%%%%%%%%%%%%%%
%%%%%%%%%%%%%%%%%%%%%%%%%%%%%%%%%%%%%%%%%%%%%
%%%%%%%%%%%%%%%%%%%%%%%%%%%%%%%%%%%%%%%%%%%%%
\makeatletter
\renewcommand{\@seccntformat}[1]{%
  \ifstrequal{#1}{section}{%
    \appendixname~\csname the#1\endcsname:\quad
  }{%
    \csname the#1\endcsname\quad
  }%
}
\makeatother

\appendix
\section{Proof of $H^1_0$-Control by Residuals}\label{Proof of H-Control by Residuals}

In this section, we provide the proof for Theorem \ref{2_Theorem_1}, establishing that the $H^1_0$-norm is controlled by the norm of the residuals associated with the strong form \eqref{eq2_strong}.

\paragraph{Proof of Theorem \ref{2_Theorem_1}:} Define the energy norm by 
$$
||u||_{H^1_p}^2=\int_\Omega p|\nabla u|^2\,dx. 
$$
We define the $H^1_0$ trace and Poincar\'e constants via 
\begin{equation}
\begin{split}
C_{tr}=& \sup\limits_{v\in H^1_0}\frac{||v||_{L^2(\Gamma)}}{||v||_{H^1_0}}\\
C_P = &\sup\limits_{v\in H^1_0}\frac{||v||_{L^2(\Omega)}}{||v||_{H^1_0}}
\end{split}
\end{equation}
We note that as $\Gamma$ is a finite union of straight lines, $C_{tr}$ is finite. Since $p_{\min}\leq p$ almost everywhere, it holds that for $v\in H^1_0$, $||v||_{H^1_p}\geq \sqrt{p_{\min}}||v||_{H^1_0}$, and thus
\begin{equation}
\begin{split}
\frac{||v||_{L^2(\Gamma)}}{||v||_{H^1_p}}\leq &\frac{C_{tr}}{\sqrt{p_{\min}}},\\
\frac{||v||_{L^2(\Omega)}}{||v||_{H^1_p}}\leq &\frac{C_P}{\sqrt{p_{\min}}},\\
\end{split}
\end{equation}
for all $v\in H^1_0$. Then, we may estimate the $H^1_0$-norm of $u$ via
\begin{equation}
\begin{split}
\sqrt{p_{\min}}||u||_{H^1_0}\leq ||u||_{H^1_p}=&\sup\limits_{v\in H^1_0}\frac{1}{||v||_{H^1_p}}\int_\Omega p\nabla u\cdot \nabla v\,dx\\
=&\sup\limits_{v\in H^1_0}\frac{1}{||v||_{H^1_p}}\left(\int_{\Gamma}[p\nabla u\cdot n]v\,dS -\int_{\Omega\setminus \Gamma} \text{div}(p\nabla u)v\,dx\right)\\
\leq & \sup\limits_{v\in H^1_0}\frac{1}{||v||_{H^1_p}}\left(||[p\nabla u\cdot n]||_{L^2(\Gamma)}||v||_{L^2(\Gamma)}+||\text{div}(p\nabla u)||_{L^2(\Omega)}||v||_{L^2(\Omega)}\right)\\
\leq & \frac{C_{tr}}{\sqrt{p_{\min}}}||[p\nabla u\cdot n]||_{L^2(\Omega)}+\frac{C_P}{\sqrt{p_{\min}}}||\text{div}(p\nabla u)||_{L^2(\Omega)}. 
\end{split}
\end{equation}
We note that the integration by parts step is permitted as $u\in \mathbb{X}^p$. This yields 
\begin{equation}
\begin{split}
||u||_{H^1_0}\leq & \frac{1}{p_{\min}}\left(\frac{C_{tr}\sqrt{\Theta}}{\sqrt{\Theta}}||[p\nabla u \cdot n]||_{L^2(\Gamma)}+C_P||\text{div}(p\nabla u)||_{L^2(\Omega)}\right)\\
\leq & \frac{\max\left(\frac{C_tr}{\sqrt{\Theta}},C_P\right)}{p_{\min}}\left(\sqrt{\Theta}||[p\nabla u \cdot n]||_{L^2(\Gamma)}+||\text{div}(p\nabla u)||_{L^2(\Omega)}\right)\\
\leq & \frac{\sqrt{2}\max\left(\frac{C_tr}{\sqrt{\Theta}},C_P\right)}{p_{\min}}\sqrt{\Theta||[p\nabla u \cdot n]||_{L^2(\Gamma)}^2+||\text{div}(p\nabla u)||_{L^2(\Omega)}^2}
\end{split}
\end{equation}

\section{Solution Decomposition Proof}\label{Solution Decomposition Proof}
\paragraph{Proof of Theorem \ref{thm_decomposition}:}
We first consider the 1D case. Let $\mathfrak{w}^p$ be the unique weak solution to the Poisson equation $\frac{d^2}{dx^2}\mathfrak{w}^p(x)=\frac{\ell^p(x)}{p(x)}$ in $H^1_0(0,1)$. Since the right-hand side belongs to $L^2(0,1)$, standard elliptic regularity implies that $\mathfrak{w}^p\in H^2(\Omega)$. We then define $\mathfrak{v}^p=\mathfrak{u}^p-\mathfrak{w}^p$. It follows that $\mathfrak{v}^p\in H^2((0,1)\setminus\Gamma)\cap H^1_0(0,1)$, as both $\mathfrak{u}^p$ and $\mathfrak{w}^p$ are in this space. The second derivative of $\mathfrak{v}^p$ is therefore well defined almost everywhere, and we observe that 
\begin{equation}
    \frac{d^2}{dx^2}\mathfrak{v}^p=\frac{d^2}{dx^2}\mathfrak{u}^p-\frac{d^2}{dx^2}\mathfrak{w}^p=\frac{\ell^p}{p(x)}-\frac{\ell^p}{p(x)}=0,
\end{equation}
almost everywhere. To obtain its Neumann-type condition, we have that 
\begin{equation}
\begin{split}
[p\frac{d}{dx}\mathfrak{v}^p\cdot\mathbf{n}] = [p\frac{d}{dx}\mathfrak{u}^p\cdot\mathbf{n}]-[p\frac{d}{dx}\mathfrak{w}^p\cdot\mathbf{n}]=-[p]\frac{d}{dx}\mathfrak{w}^p\cdot\mathbf{n},
\end{split}
\end{equation}
invoking the fact that $\mathfrak{u}^p$ satisfies the transmission condition on the interfaces and $w^p\in H^2(\Omega)$, , ensuring its derivative is continuous across $\Gamma$. 

\noindent For the two-dimensional case, the existence of the finite dimensional space of singular functions $\mathbb{U}^p$, which satisfies $\text{div}(p\nabla \mathfrak{s})\in L^2(\Omega)$ is established in \cite{kellogg1971singularities}. Furthermore, there exists $\mathfrak{s}^p\in \mathbb{U}^p$ such that 
\begin{equation}
\mathfrak{u}^p-\mathfrak{s}^p \in H^2(\Omega\setminus\Gamma)\cap H^1_0(\Omega). 
\end{equation}
We introduce the modified source term, $f^p=\frac{1}{p}\text{div}(p(\nabla \mathfrak{u}^p-\nabla \mathfrak{s}^p)) = \frac{\ell^p}{p}-\frac{1}{p}\text{div}(p\nabla \mathfrak{s}^p)\in L^2(\Omega)$. Analogous to the one-dimensional case, we define $\mathfrak{w}^p$ to be the unique weak solution in $H^1_0(\Omega)$ to 
\begin{equation}
   \int_\Omega \nabla \mathfrak{w}^p\cdot\nabla\varphi -f^p\varphi\,dx=0,
\end{equation}
for all $\varphi\in H^1_0(\Omega)$. By elliptic regularity on convex polygonal domains, $\mathfrak{w}^p\in H^2(\Omega)$. Consequently, its Laplacian is well defined almost everywhere and satisfies 
\begin{equation}\label{eqLaplacianWp}
\Delta \mathfrak{w}^p = f^p=\Delta\left(\mathfrak{u}^p-\mathfrak{s}^p\right)
\end{equation}
away from the interfaces. We define $\mathfrak{v}^p=\mathfrak{u}^p-\mathfrak{s}^p-\mathfrak{w}^p$. Since $\mathfrak{u}^p-\mathfrak{s}^p\in H^2(\Omega\setminus\Gamma)\cap H^1_0(\Omega)$ and $\mathfrak{u}^p\in H^2(\Omega)\cap H^1_0(\Omega)$, it is evident that $\mathfrak{v}^p\in H^2(\Omega\setminus\Gamma)\cap H^1_0(\Omega)$. By \eqref{eqLaplacianWp}, the Laplacian of $\mathfrak{v}^p$ vanishes almost everywhere. Regarding the interface condition, we have 
\begin{equation}
[p\nabla \mathfrak{v}^p\cdot \mathbf{n}]=[p\nabla (\mathfrak{u}^p-\mathfrak{s}^p-\mathfrak{w}^p)\cdot \mathbf{n}]=[p\nabla \mathfrak{u}^p\cdot \mathbf{n}]-[p\nabla \mathfrak{s}^p\cdot \mathbf{n}]-[p\nabla \mathfrak{w}^p\cdot \mathbf{n}].
\end{equation}
Since both $\mathfrak{u}^p$ and $\mathfrak{s}^p$ satisfy the homogeneous transmission condition ($[p\nabla \cdot \mathbf{n}] = 0$), and owing to the $H^2(\Omega)$ regularity of $\mathfrak{w}^p$ which implies continuity of its gradient, the expression reduces to
\begin{equation}
[p\nabla \mathfrak{v}^p \cdot \mathbf{n}] = -[p] \nabla \mathfrak{w}^p \cdot \mathbf{n}.
\end{equation}
This completes the proof.

\section{Implementation Details of the Trial Spaces $U_1$, $U_2$, and $U_3^p$}\label{Implementation Details of the Trial Spaces}

\subsection{Neural Network Structure}\label{Neural Network Structure}
Let us consider the following fully-connected NN illustrated in Figure \ref{fig2_NN_architecture_1D}, with depth $D$ and the set of learnable parameters $\alpha$

\begin{equation}\label{eq2_NN_before ReCoNN}
(\bar{\mathbf{w}}^\alpha, \bar{\mathbf{v}}^\alpha) = L_{D} \circ L_{D - 1} \circ \ldots  \circ  L_1 \circ L_0, 
\end{equation}
where $(\bar{\mathbf{w}}^\alpha, \bar{\mathbf{v}}^\alpha) = (\bar{w}^\alpha_1, \ldots \bar{w}^\alpha_{N_1},\bar{v}^\alpha_1, \ldots, \bar{v}^\alpha_{N_2})^T$ and $L_0 = x$ are the output and input vectors of the network, respectively. $L_d$ is the layer function and it is defined as follows
\begin{equation}\label{eq_layers}
L_d =\sigma( A_d L_{d-1} + b_d), \qquad d = 1, \ldots, D, 
\end{equation}
where $\sigma(\cdot)$ is an activation function that acts component-wise on vectors. Here, we utilize a $\tanh$ activation function. The matrices $A_d\in\mathbb R^{M_d} \times \mathbb R^{M_{d-1}}$ and the vectors $b_d\in\mathbb{R}^{M_{d}}$ are the so-called weights and biases, respectively, where their components collectively form the set $\alpha$. In this representation, $M_0$ is the dimension of the problem, $M_{D} = N_1 + N_2$ is the dimension of the output, and $M_{d}$ for $ d =1, \ldots, D-1$ denotes the number of neurons in the $d$-th layer. Note that the success of an NN heavily depends on the architecture and proper selection of hyperparameters (e.g., initialization, learning rate), as different initializations can yield distinct results. However, a detailed analysis of these factors is beyond the scope of this work.

\subsection{Implementation of the Boundary Cutoff}\label{Boundary Cutoff Function}
In the context of using NNs to solve PDEs, Dirichlet boundary conditions can be enforced strongly by incorporating them directly into the network’s output structure. This approach ensures that the resulting solution space inherently satisfies the prescribed boundary conditions. For homogeneous {Dirichlet} boundary conditions, we apply a cut-off function $\mathcal{B}$, where $\mathcal{B}(x) = 0$ on the segments of the boundary for which the solution space includes the boundary conditions \cite{berrone2023enforcing}. Given that any nonhomogeneous PDE problem can be transformed into a homogeneous one, this methodology can be readily adapted to handle inhomogeneous boundary conditions by applying an appropriate lifting. 

\subsection{Implementation of the Gradient Jump Cutoff}\label{Gradient jump cutoff}
The gradient jump cutoff function $\Psi(x) = (\psi_1, \psi_2, \ldots, \psi_{N_2})^T \in \mathbb{R}^{N_2}$ is constructed for one- and two-dimensional problems as follows:
\begin{itemize}
    \item \textbf{One-dimensional problems:} All components of the vector-valued function are set to a common scalar function, $\psi_n = \bar{\psi}$ for $n = 1, \dots, N_2$. In this setting, the $M = I-1$ material interfaces are located at positions $\gamma_{i(i+1)}$. Based on the framework of normalized Approximate Distance Functions (ADFs) \cite{berrone2023enforcing}, the scalar component $\bar{\psi}(x)$ is defined as
    \begin{equation}
    \bar{\psi}(x) = \sqrt{\sum_{i = 1}^M \frac{1}{(x - \gamma_{i(i + 1)})^2}}.
    \end{equation}

    \item \textbf{Two-dimensional problems:} The vector-valued function $\Psi(x)$ is partitioned into two equal groups of functions as follows
    \begin{equation}\label{eq2_Psi_2D}
    \psi_n =
    \begin{cases}
    \bar{\psi}_1, & 1 \le n \le N_2/2, \\[2pt]
    \bar{\psi}_2, & N_2/2 < n \le N_2,
    \end{cases}
    \qquad n = 1,\dots,N_2.
    \end{equation}
    Following the construction of normalized ADFs \cite{berrone2023enforcing}, the scalar functions $\bar{\psi}_1$ and $\bar{\psi}_2$ are given by
    \begin{equation}\label{eq2_psi}
    \bar{\psi}_1(x)
    =
    \left(\sum_{\gamma \in \Gamma_1}\frac{1}{(x-\gamma)^2}\right)^{1/2},
    \qquad
    \bar{\psi}_2(x)
    =
    \left(\sum_{\gamma \in \Gamma_2}\frac{1}{(x-\gamma)^2}\right)^{1/2},
    \end{equation}
    where $\Gamma_1$ and $\Gamma_2$ denote the sets of vertical and horizontal interfaces, respectively,
    and $\Gamma = \Gamma_1 \cup \Gamma_2$.
\end{itemize}

\subsection{Implementation of the Singularity-Exclusion Cutoff}\label{Singularity-Exclusion cutoff}
We define the singularity-exclusion cutoffs $\Phi_1(x)\in \mathbb{R}^{N_1}$ and $\Phi_2(x)\in \mathbb{R}^{N_2}$ for one- and two-dimensional problems as follows:
\begin{itemize}
    \item \textbf{One-dimensional problems:} Since 1D transmission problems do not exhibit singularities, the corresponding exclusion cutoffs are constant. Consequently, for all $x \in \Omega$, we set $\Phi_1(x) = \mathbf{1}_{N_1}$ and $\Phi_2(x) = \mathbf{1}_{N_2}$, where $\mathbf{1}_n$ denotes a vector of ones of dimension $n$.

    \item \textbf{Two-dimensional problems:} In 2D settings, we define the cutoff function $\Phi_1(x)$ for the smooth component as a partitioned vector
    \begin{equation}\label{eq2_Phi_1}
    \Phi_1(x) = \left( \underbrace{\phi_1(x), \dots, \phi_1(x)}_{\frac{N_1}{N_s}}, \underbrace{\phi_2(x), \dots, \phi_2(x)}_{\frac{N_1}{N_s}}, \ldots, \underbrace{\phi_{N_s}(x), \dots, \phi_{N_s}(x)}_{\frac{N_1}{N_s}} \right)^T,
    \end{equation}
    where $N_s$ represents the number of singular points. For each singular point $x_n$, the component function $\phi_n(x)$ is defined using the smooth function $\eta$ as follows
    \begin{equation}\label{eq2_phi_k}
    \phi_n(x) = 1 - \eta(|x - x_n|), \quad n = 1, \dots, N_s.
    \end{equation}
    Similarly, $\Phi_2(x) \in \mathbb{R}^{N_2}$, is constructed to ensure local nullification at each singular points, i.e.,
    \begin{equation}\label{eq2_Phi_2}
    \Phi_2(x) = \begin{pmatrix} \phi(x), \ \phi(x) \end{pmatrix}, \quad \text{with }\quad \phi(x) = \left( \underbrace{\phi_1(x), \dots, \phi_1(x)}_{\frac{N_2}{2N_s}}, \underbrace{\phi_2(x), \dots, \phi_2(x)}_{\frac{N_2}{2N_s}}, \ldots, \underbrace{\phi_{N_s}(x), \dots, \phi_{N_s}(x)}_{\frac{N_2}{2N_s}} \right)^T.
    \end{equation}
\end{itemize}

\subsection{Implementation of the FE Eigenvalue Solver} \label{Implementation of the FE Eigenvalue Solver}
\subsubsection{Construction of Basis Functions}
The angular domain $[0, 2\pi]$ is partitioned into four subdomains, defined by the intervals $[\frac{i\pi}{2}, \frac{(i+1)\pi}{2}]$ for $i = 0, \dots, 3$. For computational convenience, the angular domain is mapped to a reference domain $\xi \in [0, 4]$. Within this framework, we define two distinct sets of basis functions to span the approximation space:

\begin{itemize}
    \item \textbf{Internal Polynomials:} Within each element $e \in \{0, 1, 2, 3\}$, we define higher-order basis functions using fourth-degree polynomials. To ensure these functions vanish at the element nodes (maintaining independence from boundary constraints), we utilize a windowing factor $\xi(1-\xi)$. The internal basis functions are defined as follows
    \begin{itemize}
        \item $\varphi_{3e}(\xi) = \xi(1-\xi)$
        \item $\varphi_{3e+1}(\xi) = 5\xi(1-\xi)(\xi-0.5)$
        \item $\varphi_{3e+2}(\xi) = 20\xi(1-\xi)(\xi-0.5)^2$
    \end{itemize}
    
    \item \textbf{Global Hat Functions:} To enforce $C^0$ continuity between boundary of elements, we include linear hat functions as follows
    \begin{equation}
        \varphi_{e + 12}(\xi) = \frac{1}{4} \max(0, 1 - |\xi - k|), \qquad e = 0, \ldots, 3
    \end{equation}
    Because the domain is periodic, these functions wrap around the boundary.
\end{itemize}

\noindent This construction yields a total of 16 basis functions per singular point.

\subsubsection{Numerical Integration}
The assembly of the matrices $G^p$ and $B^p$ in Equation \eqref{eq2_FE_matrices} involves the integration of polynomial products weighted by the material parameter $p(\theta)$. We employ a 5-point Gauss-Legendre quadrature rule on each element. Since the basis functions are polynomials of degree 4, the products $\varphi_i \varphi_j$ and $\frac{d\varphi_i}{d\theta} \frac{d\varphi_j}{d\theta}$ result in polynomials of at most degree 8. A 5-point Gaussian rule integrates polynomials up to degree $9$ exactly; thus, the quadrature introduces no numerical integration error for the chosen space.

\subsubsection{Algorithm}

The generalized eigenvalue problem \eqref{eq2_FE_eigenvalue_solver} is solved by transforming it into a standard symmetric eigenvalue problem and then using the \texttt{tensorflow.linalg} suite. This procedure is summarized in Algorithm \ref{alg:eigenvalue_solver}.

\begin{algorithm}[H]
\SetAlgoLined
\DontPrintSemicolon 
Assemble the stiffness matrix $G^p$ and mass matrix $B^p$ using Eq. \eqref{eq2_FE_matrices}.\\
Compute the inverse square root of the mass matrix, $(B^p)^{-1/2}$.\\%, using \texttt{tf.linalg.sqrtm} and \texttt{tf.linalg.inv}.\\
Construct the symmetric matrix $M = (B^p)^{-1/2} G^p (B^p)^{-1/2}$.\\
Compute the eigenvalues and eigenvectors of $M$.\\% using the \texttt{tf.linalg.eigh} algorithm.\\
Recover the angular eigenvectors $\rho_j$ of the generalized problem via the transformation $\vartheta = (B^p)^{-1/2} y_j$ where $y_j$ is the eigenvalue of $M$.\\
\caption{Implementation for solving the FE eigenvalue problem}\label{alg:eigenvalue_solver}
\end{algorithm}

%%%%%%%%%%%%%%%%%%%%%%%%%%%%%%%%%%%%%%%%%%%%%
%%%%%%%%%%%%%%%%%%%%%%%%%%%%%%%%%%%%%%%%%%%%%
%%%%%%%%%%%%%%%%%%%%%%%%%%%%%%%%%%%%%%%%%%%%%
%%%%%%%%%%%%%%%%%%%%%%%%%%%%%%%%%%%%%%%%%%%%%
%%%%%%%%%%%%%%% Appendix D %%%%%%%%%%%%%%%%%%
%%%%%%%%%%%%%%%%%%%%%%%%%%%%%%%%%%%%%%%%%%%%%
%%%%%%%%%%%%%%%%%%%%%%%%%%%%%%%%%%%%%%%%%%%%%
%%%%%%%%%%%%%%%%%%%%%%%%%%%%%%%%%%%%%%%%%%%%%
%%%%%%%%%%%%%%%%%%%%%%%%%%%%%%%%%%%%%%%%%%%%%

\section{Strong Formulation of the Decomposed Solution}\label{Strong Formulation of the Decomposed Solution}

In this section, we derive the expressions for the Laplacian of the decomposed solutions \eqref{eq2_exact solution_1D} and \eqref{eq2_exact solution_2D}, and verify the corresponding flux continuity condition. These results provide the detailed analytical foundation for the formulation of the transmission elliptic problem presented in Subsection~\ref{LS-ReCoNN Loss Function}.

\noindent By considering three components of \eqref{eq2_exact solution_2D}, the Laplacian of $\mathfrak{u}^p$ becomes
\begin{equation}
\begin{split}
\Delta \mathfrak{u}^p(x)=&\Delta (\mathfrak{w}^p(x) + \mathfrak{v}^p(x) + \sum_{i = 1}^K\sum_{j = 1}^{\bar{K}_i}\kappa^p_{ij} \mathfrak{s}_{ij}^p(x)\eta(|x|))\\
=& \Delta (\mathfrak{w}^p(x)  + \mathfrak{v}^p(x)) +\sum_{i = 1}^K\sum_{j = 1}^{\bar{K}_i}\kappa^p_{ij} \Delta \mathfrak{s}_{ij}^p(x)\eta(|x|)+2\kappa^p_{ij} \nabla \mathfrak{s}_{ij}^p(x)\cdot\nabla \eta(|x|)+\kappa^p_{ij} \mathfrak{s}_{ij}^p(x)\Delta \eta(|x|).
\end{split}
\end{equation}
As $\mathfrak{s}_{ij}^p$ is harmonic, this simplifies to 
\begin{equation}
    \Delta \mathfrak{u}^p(x)=\Delta (\mathfrak{w}^p(x) + \mathfrak{v}^p(x)) + \sum_{i = 1}^K\sum_{j = 1}^{\bar{K}_i}2\kappa^p_{ij} \nabla \mathfrak{s}^p_{ij}(x)\cdot\nabla \eta(|x|)+\kappa^p_{ij} \mathfrak{s}^p_{ij}(x)\Delta \eta(|x|).
\end{equation}
On the other hand, if $|x|<\delta_1$ or $|x|>\delta_2$, then $\eta(|x|)$ is constant (either 0 or 1), so $\nabla(\eta(|x|))=0$ and $\Delta \eta(|x|)=0$. Thus, defining $A_{\delta_1,\delta_2}$ to be the annulus, $B_{\delta_2}\setminus B_{\delta_1}$, we have that 
\begin{equation}\label{eq2_Delta_u}
\begin{split}
\Delta u(x)=&\left\{\begin{array}{c l} 
\Delta (\mathfrak{w}^p(x) + \mathfrak{v}^p(x))+\displaystyle  \sum_{i = 1}^K\sum_{j = 1}^{\bar{K}_i}2\kappa^p_{ij} \nabla \mathfrak{s}^p_{ij}(x)\cdot\nabla \eta(|x|)+\kappa^p_{ij} \mathfrak{s}^p_{ij}(x)\Delta \eta(|x|), & x\in A_{\delta_1,\delta_2},\\
\Delta  (\mathfrak{w}^p(x) + \mathfrak{v}^p(x)),&x\in \Omega\setminus A_{\delta_1,\delta_2}. \end{array}\right.
\end{split}
\end{equation}
 $\mathfrak{s}^p_{ij}$ is defined on the annulus $A_{\delta_1,\delta_2}$, defined away from the singularity. Furthermore, since $\eta$ only depends on $r$, as the gradient of $\mathfrak{s}_{ij}^p$ appears with an inner product with $\eta$, using the representation of $\mathfrak{s}_{ij}^p$ in polar coordinates \eqref{eq2_sij definition}, we have 
 \begin{equation}
     \nabla \mathfrak{s}^p_{ij}(x)\cdot\nabla \eta(|x|) = \lambda^p_{ij} r^{\lambda_{ij}^p-1}\phi^p_{ij}(\theta)\eta'(r), \qquad i=1, \ldots, K, \qquad j=1, \ldots, N_i.
 \end{equation}
In particular, no derivatives of $\phi^p_{ij}$ arise. This means that to approximate this term, and consequently the rest of $p(x)\Delta \mathfrak{u}^p(x)$, we need to only properly approximate $\lambda^p_{ij}$ and $\phi^p_{ij}$, but not the derivative of $\phi^p_{ij}$ (nor its second derivative). Substituting the $\mathcal{S}_{ij}^p$ from \eqref{eq2_mathcal{S}ij_main} into the equation \eqref{eq2_Delta_u}, we recover the transmission elliptic problem formulated in \eqref{eq2_transmission elliptic problem}. Note that $\mathcal{S}^p_{ij}$ is zero for $r < \delta_1$ because $\mathfrak{s}^p_{ij}$ is harmonic in this region, and it is also zero for $r > \delta_2$ because $\eta(r) = 0$.

\noindent Next, we turn to the flux continuity condition. We have that 
\begin{equation}
\begin{split}
0=&\left[p(x)\, \nabla\mathfrak{u}^p(x) \cdot \mathbf{n}\right]\\[2mm]
=& \left[p(x)\nabla\left(\mathfrak{w}^p(x) + \mathfrak{v}^p(x)+ \sum_{i = 1}^K\sum_{j = 1}^{\bar{K}_i}\kappa^p_{ij} \mathfrak{s}_{ij}^p(x)\eta(|x|)\right)\cdot \mathbf{n}\right]\\[2mm]
=&\left[p(x)\nabla (\mathfrak{w}^p(x) + \mathfrak{v}^p(x))\cdot\mathbf{n}+p(x)\sum_{i = 1}^K\sum_{j = 1}^{\bar{K}_i}\kappa^p _{ij} \eta(|x|)\nabla\mathfrak{s}^p_{ij}(x)\cdot\mathbf{n}+p(x)\kappa^p_{ij} \mathfrak{s}^p_{ij}(x)\nabla\eta(|x|)\cdot \mathbf{n}\right].
\end{split}
\end{equation}
In the polygonal geometry, $\mathbf{n}$ is parallel to the angle and orthogonal to the radial direction. In particular, as $\eta$ only depends on the radius, $\nabla\eta(|x|)\cdot \mathbf{n}=0$. In addition, $\left[\nabla \mathfrak{w}^p(x)\cdot\mathbf{n}\right] = 0$. With these properties, we obtain 
$$
0=\left[p(x)\right](\nabla \mathfrak{w}^p(x)\cdot\mathbf{n}) + \left[p(x)\nabla \mathfrak{v}^p(x)\cdot\mathbf{n}\right]+\sum_{i = 1}^K\sum_{j = 1}^{\bar{K}_i}\kappa^p_{ij} \eta(|x|)\left[p(x)\nabla\mathfrak{s}^p_{ij}(x)\cdot\mathbf{n}\right]
$$
Finally, since each function $\mathfrak{s}^p_{ij}$ satisfies the flux continuity condition by construction, it follows that the overall flux continuity condition for $\mathfrak{u}^p$ reduces to equation \eqref{eq2_Continuity-flux}.

%%%%%%%%%%%%%%%%%%%%%%%%%%%%%%%%%%%%%%%%%%%%%
%%%%%%%%%%%%%%%%%%%%%%%%%%%%%%%%%%%%%%%%%%%%%
%%%%%%%%%%%%%%%%%%%%%%%%%%%%%%%%%%%%%%%%%%%%%
%%%%%%%%%%%%%%%%%%%%%%%%%%%%%%%%%%%%%%%%%%%%%
%%%%%%%%%%%%%%% Appendix E %%%%%%%%%%%%%%%%%%
%%%%%%%%%%%%%%%%%%%%%%%%%%%%%%%%%%%%%%%%%%%%%
%%%%%%%%%%%%%%%%%%%%%%%%%%%%%%%%%%%%%%%%%%%%%
%%%%%%%%%%%%%%%%%%%%%%%%%%%%%%%%%%%%%%%%%%%%%
%%%%%%%%%%%%%%%%%%%%%%%%%%%%%%%%%%%%%%%%%%%%%

\section{Proof of the Error Bound}\label{Proof of the Error Bound}
\noindent In this section, we present the proof of Theorem~\eqref{Theorem_main} via a three-step approach. 
We begin by establishing two auxiliary lemmas and a theorem in the nonparametric setting, and subsequently extend these results to the parametric transmission problems through a further theorem. 
For notational convenience, we introduce the following definition
\[
\langle A, B \rangle := \sum_{i=1}^{N_s} \sum_{j = 1}^{K_i} A_{ij} B_{ij},
\]
where $A = \left(\left(a_{ij}\right)_{i=1}^{N_s}\right)_{j=1}^{N_3}$ and $B = \left(\left(b_{ij}\right)_{i=1}^{N_s}\right)_{j=1}^{N_3}$

\begin{lemma}\label{Lemma_2_1}
For a fixed parameter $p \in \mathbb{P}$, let $u^{p, \alpha}_*$ be a candidate solution in the decomposed trial space $U_*^{p, \alpha} := U^\alpha_1 + U^\alpha_2 + \mathbb{U}_3^p$, defined as follows
\begin{equation}
u^{p, \alpha}_* := w^{p, \alpha} + v^{p, \alpha} + \langle \mathbf{c}^{p, \alpha}, \mathfrak{s}^p \rangle.
\end{equation}
Then, there exists a constant $C_1 > 0$,  independent of $p$ and the candidate solution $u^{p, \alpha}_*$, such that the following stability estimate holds
\begin{equation}\label{eq2_upper_bound_H01}
\| u_*^{p, \alpha} - \mathfrak{u}^p \|_{H_0^1(\Omega)}^2 \leq C_1 {\mathcal{J}}_*(U_*^{p, \alpha}),
\end{equation}
where the residual functional ${\mathcal{J}}(U_*^p)$ is defined as follows
\begin{equation}
{\mathcal{J}}_*(U_*^{p, \alpha}) = \| p \left( \Delta (w^{p, \alpha} + v^{p, \alpha}) + \langle \mathbf{c}^{p, \alpha}, \mathcal{S}^p \rangle \right) - l^p \|^2_{L^2(\Omega)} + \Theta \| \left[ p \nabla (w^{p, \alpha} + v^{p, \alpha}) \cdot \mathbf{n} \right] \|_{L^2(\Gamma)}^2.
\end{equation}
where \( \mathcal{S}^p = \left( \left( \mathcal{S}_{ij}^p \right)_{j=1}^{N_i} \right)_{i=1}^K \).
\end{lemma}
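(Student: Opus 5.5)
The plan is to reduce Lemma~\ref{Lemma_2_1} to Theorem~\ref{2_Theorem_1}, applied to the error $e := u_*^{p,\alpha} - \mathfrak{u}^p$. The key point is that the trial space $U_*^{p,\alpha}$ uses the \emph{exact} singular functions $\mathfrak{s}^p_{ij}$ from $\mathbb{U}^p_3$. Hence $u_*^{p,\alpha}\in\mathbb{X}^p$ by construction (see the remark after \eqref{eq2_space_decomposition}), and so $e\in\mathbb{X}^p$. Theorem~\ref{2_Theorem_1} then gives
\begin{equation*}
\|e\|_{H^1_0}^2 \le \frac{C^2}{\min(p)^2}\Big(\|\text{div}(p\nabla e)\|_{L^2(\Omega)}^2 + \Theta\|[p\nabla e\cdot\mathbf{n}]\|_{L^2(\Gamma)}^2\Big),
\end{equation*}
where $\text{div}(p\nabla e)$ is understood piecewise on each $\Omega_i$. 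It remains to identify the two residual terms with those of $\mathcal{J}_*$.

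For the volume term, I use the strong form \eqref{eq2_strong}: on each $\Omega_i$ we have $\text{div}(p\nabla\mathfrak{u}^p) = p\Delta\mathfrak{u}^p = -\ell^p$, with the sign convention matching the loss. Since $p$ is constant on each $\Omega_i$, $\text{div}(p\nabla u_*^{p,\alpha}) = p\,\Delta(w^{p,\alpha}+v^{p,\alpha}) + p\,\langle \mathbf{c}^{p,\alpha}, \Delta \mathfrak{s}^p\rangle$. By the computation in Appendix~\ref{Strong Formulation of the Decomposed Solution}, $\Delta$ of a cut-off singular function equals $\mathcal{S}^p_{ij}$. This uses three facts: harmonicity for $r<\delta_1$, vanishing for $r>\delta_2$, and the product rule on the annulus, see \eqref{eq2_mathcal{S}ij_main}. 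The volume residual is therefore exactly the first term of $\mathcal{J}_*$, and it lies in $L^2$ because $\mathcal{S}^p_{ij}$ is supported away from the vertex.

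For the interface term, I expand $[p\nabla u_*^{p,\alpha}\cdot\mathbf{n}]$ and use $[p\nabla\mathfrak{u}^p\cdot\mathbf{n}]=0$. As in Appendix~\ref{Strong Formulation of the Decomposed Solution}, I also use $[p\nabla(\mathfrak{s}^p_{ij}\eta)\cdot\mathbf{n}]=0$. This holds because $\nabla\eta\cdot\mathbf{n}=0$ on the straight interfaces through the vertex, and each $\mathfrak{s}^p_{ij}$ satisfies the transmission condition by the Sturm–Liouville construction \eqref{eq2_SL}. Only $[p\nabla(w^{p,\alpha}+v^{p,\alpha})\cdot\mathbf{n}]$ survives, which is the second term of $\mathcal{J}_*$.

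Combining the two identifications gives \eqref{eq2_upper_bound_H01} with $C_1 = C^2/\min(p)^2$. For $C_1$ to be independent of $p$, I will assume $\min(p)$ is bounded below uniformly over $\mathbb{P}$ (e.g.\ $p_i\ge 1$ in the examples). I expect the main obstacle to be the justification that $e\in\mathbb{X}^p$, so that Theorem~\ref{2_Theorem_1} and its integration by parts apply. The delicate points are these: near the vertices $\mathfrak{s}^p_{ij}$ lacks $H^2$ regularity, so I must check that $(p\nabla \mathfrak{s}^p_{ij})|_{\Omega_i}\in H(\text{div},\Omega_i)$ (true since its divergence is $0$ near the vertex), that the one-sided flux traces lie in $L^2(\Gamma)$, and that the NN components, with the cutoffs $\Psi,\Phi_1,\Phi_2,\mathcal{B}$, lie in $\mathbb{U}_1+\mathbb{U}_2$. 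I will handle the singular part by invoking \cite{kellogg1971singularities}, as Theorem~\ref{thm_decomposition} does.
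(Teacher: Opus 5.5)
Your proposal is correct. At its core it is the paper's argument, routed through Theorem~\ref{2_Theorem_1}. Both proofs rest on three ingredients:
\begin{enumerate}
\item piecewise integration by parts of the $p$-weighted energy pairing;
\item Poincar\'e and trace bounds for the volume and interface terms;
\item the facts that each exact cut-off singular function has piecewise Laplacian $\mathcal{S}^p_{ij}$ and zero flux jump.
\end{enumerate}

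The routes differ as follows.
\begin{itemize}
\item \textbf{The paper} works directly with the residual functional $\mathcal{R}(u_*^{p,\alpha},v)$. It integrates by parts only on the candidate and uses the \emph{weak} form of $\mathfrak{u}^p$ plus coercivity, so it never needs $\mathfrak{u}^p\in\mathbb{X}^p$.
\item \textbf{Your route} applies Theorem~\ref{2_Theorem_1} to the error. This needs $\mathfrak{u}^p\in\mathbb{X}^p$ and its strong form, both asserted in Subsection~2.1. In return it gives the squared estimate with a correctly tracked constant $C^2/\min(p)^2$ in one step.
\item \textbf{The paper's constants:} its chain bounds $|\mathcal{R}|/\|v\|_{H^1_0}$ by $\mathcal{J}_*$ instead of $\sqrt{\mathcal{J}_*}$, and bounds $\|e\|^2$ instead of $\|e\|$ by the dual norm. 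It reaches the same final form only because these two slips cancel.
\end{itemize}
Your explicit uniform lower bound on $\min(p)$ is exactly what the paper's ``uniform coercivity constant'' presupposes.

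Fix your sign. With the weak form as stated, $\mathrm{div}(p\nabla\mathfrak{u}^p)=-\ell^p$ gives the residual $p(\cdots)+\ell^p$, not the $p(\cdots)-\ell^p$ appearing in $\mathcal{J}_*$. You must adopt the loss's convention $\mathrm{div}(p\nabla\mathfrak{u}^p)=\ell^p$; the paper's own proof has the same inconsistency.

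Drop one item from your checklist, because it is false in general: the one-sided flux traces of $\mathfrak{s}^p_{ij}$ need not lie in $L^2(\Gamma)$.
\begin{itemize}
\item Near the vertex, on an interface ray, $\nabla\mathfrak{s}^p_{ij}\cdot\mathbf{n}=r^{\Lambda^p_{ij}-1}(\vartheta^p_{ij})'(\theta^\pm)$. This is square integrable in $r$ only if $\Lambda^p_{ij}>1/2$.
\item Take the checkerboard with values $R$ and $1$ on alternating quadrants. There $\tan(\Lambda\pi/4)=R^{-1/2}$, so $\Lambda<1/2$ once $R>3+2\sqrt{2}$. For instance $R=10$, which lies in the paper's range $p_i\in[1,10]$, gives $\Lambda\approx 0.39$.
\end{itemize}
Membership in $\mathbb{X}^p$ only requires the \emph{jump} to lie in $L^2(\Gamma)$, and what you actually need is that this jump vanishes.

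The clean way is to prove that $p\nabla\mathfrak{s}^p_{ij}\in H(\mathrm{div},\Omega)$ globally, with divergence $p\,\mathcal{S}^p_{ij}$:
\begin{enumerate}
\item For $v\in C_c^\infty(\Omega\setminus\{x_i\})$, integrate by parts classically on each subdomain. The function is smooth up to the boundary away from $x_i$.
\item The interface terms cancel pointwise, by the transmission condition built into the Sturm--Liouville problem together with $\nabla\eta\cdot\mathbf{n}=0$.
\item Conclude by density of such $v$ in $H^1_0(\Omega)$, since a point has zero capacity in two dimensions. Both sides of the identity are $H^1$-continuous in $v$.
\end{enumerate}
With this, the singular part contributes only the volume term $p\,\langle\mathbf{c}^{p,\alpha},\mathcal{S}^p\rangle$ and no interface term. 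That makes both your route and the paper's subdomain-wise integration by parts rigorous. The exact solution needs no such argument, since $p\nabla\mathfrak{u}^p\in H(\mathrm{div},\Omega)$ follows directly from the weak form.
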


%%%%%%%%%%%%%%%%%%%%%%%%%%%%%%%%%%%%%%%%%%%%%
%%%%%%%%%%%%%%%%%%%%%%%%%%%%%%%%%%%%%%%%%%%%%
%%%%%%%%%%%%%%%%%%%%%%%%%%%%%%%%%%%%%%%%%%%%%
\begin{proof}
Consider the residual functional $\mathcal{R}(u^{p, \alpha}_*, v)$ given by
\begin{equation} \label{eq2_proof_residual}
\mathcal{R}(u^{p, \alpha}_*, v) := \int_\Omega p \nabla u^{p, \alpha}_* \cdot \nabla v \, dx - \int_\Omega l^p v \, dx, \quad \forall v \in \mathbb{V}.
\end{equation}
Since the coefficient $p(x)$ is piecewise constant and may be discontinuous across the interfaces, we perform integration by parts on the first term of the residual in \eqref{eq2_proof_residual} separately within each subdomain, i.e., for each $1\leq i\leq I$, we have
\begin{align}\label{eq2_proof_int_by_part}
\int_{\Omega_i} p \nabla u^{p, \alpha}_* \cdot \nabla v \, dx = - \int_{\Omega_i} p \Delta u^{p, \alpha}_* \, v \, dx + \int_{\partial \Omega_i} p \nabla u^{p, \alpha}_* \cdot \mathbf{n}_i \, v \, ds,
\end{align}
where $\mathbf{n}_i$ is the outward normal on $\partial \Omega_i$. Using equations \eqref{eq2_Delta_u} and \eqref{eq2_mathcal{S}ij_main}, we simplify \eqref{eq2_proof_int_by_part} as follows
\begin{align}
\int_{\Omega_i} p \nabla u^{p, \alpha}_* \cdot \nabla v \, dx
=  - \int_{\Omega_i} p \left( \Delta (w^{p, \alpha} + v^{p, \alpha}) + \langle \mathbf{c}^{p, \alpha}, \mathcal{S}^p \rangle \right)\, v \, dx + \int_{\partial \Omega_i} p \nabla  u^{p, \alpha}_* \cdot \mathbf{n}_i \, v \, ds.
\end{align}
Then, summing over all subdomains and utilizing the facts that $v$ vanishes on $\partial \Omega$ and $\left[p\nabla \mathfrak {s}^p_{ij}\cdot \mathbf{n}\right]=0$ holds across the interfaces, we obtain 
\begin{align}\label{eq2_proof_int_by_part_Simplify}
\begin{array}{rl}
    \displaystyle \int_{\Omega} p \nabla u^{p, \alpha}_* \cdot \nabla v \, dx
    =  & \displaystyle - \int_{\Omega} p \left( \Delta (w^{p, \alpha} + v^{p, \alpha}) + \langle \mathbf{c}^{p, \alpha}, \mathcal{S}^p \rangle \right) v \, dx  +\int_{\Gamma}[ p \nabla  (w^{p, \alpha} + v^{p, \alpha}) \cdot \mathbf{n}v]\, dx.\\
\end{array}
\end{align}
Substituting \eqref{eq2_proof_int_by_part_Simplify} into \eqref{eq2_proof_residual}, we obtain
\begin{equation}\label{eq2_proof_residual_1}
\begin{array}{rl}
\mathcal{R}(u^{p, \alpha}_*, v) =\displaystyle-\int_{\Omega} \left(p \left( \Delta (w^{p, \alpha} + v^{p, \alpha}) + \langle \mathbf{c}^{p, \alpha}, \mathcal{S}^p \rangle \right) - l^p \right) v \, dx  +\int_{\Gamma}[ p \nabla  (w^{p, \alpha} + v^{p, \alpha}) \cdot \mathbf{n}] v\, dx.
\end{array}
\end{equation}
By using the Cauchy–Schwarz and Poincaré inequalities, we estimate the first term of \eqref{eq2_proof_residual_1} as follows
\begin{align}\label{eq2_proof_Cauchy-Schwarz_Poincaré}
\left| \int_{\Omega} \left(p \left( \Delta (w^{p, \alpha} + v^{p, \alpha}) + \langle \mathbf{c}^{p, \alpha}, \mathcal{S}^p \rangle \right) - l^p \right) v \, dx \right|
&\leq \left\| p \left( \Delta (w^{p, \alpha} + v^{p, \alpha}) + \langle \mathbf{c}^{p, \alpha}, \mathcal{S}^p \rangle  \right) - l^p \right\|_{L^2(\Omega)} \|v\|_{L^2(\Omega)} \\
&\leq \beta_1 \left\| p \left( \Delta (w^{p, \alpha} + v^{p, \alpha}) + \langle \mathbf{c}^{p, \alpha}, \mathcal{S}^p \rangle  \right) - l^p \right\|_{L^2(\Omega)} \|v\|_{H_0^1(\Omega)},
\end{align}
where \( \beta_1 >0 \) is the Poincaré constant associated with \( \Omega \), and is independent of the parameter \( p \), as well as the functions \( w^{p, \alpha} \), \( v^{p, \alpha} \) and \( \mathbf{c}^{p, \alpha} \). Similarly, using the trace inequality for the jump term of \eqref{eq2_proof_residual_1}, we have
\begin{align}\label{eq2_proof_trace}
\left| \int_{\Gamma} [ p \nabla (w^{p, \alpha} + v^{p, \alpha}) \cdot \mathbf{n} ] v \, ds \right|
\leq & \left\| [ p \nabla (w^{p, \alpha} + v^{p, \alpha}) \cdot \mathbf{n} ] \right\|_{L^2(\Gamma)} \|v\|_{L^2(\Gamma)} \\
\leq &\beta_2 \left\| [ p \nabla (w^{p, \alpha} + v^{p, \alpha}) \cdot \mathbf{n} ] \right\|_{L^2(\Gamma)} \|v\|_{H_0^1(\Omega)},
\end{align} 
where \( \beta_2 > 0 \) is the trace constant associated with the set of interfaces \( \Gamma \), defined explicitly as
\[
\beta_2 := \sup_{v \in H_0^1(\Omega) \setminus \{0\}} \frac{\|v\|_{L^2(\Gamma)}}{\|v\|_{H_0^1(\Omega)}},
\]
and is independent of the parameter \( p \) and the functions \( w^{p, \alpha} \) and \( v^{p, \alpha} \). By multiplying \eqref{eq2_proof_trace} by $ \Theta$ and combining the result with \eqref{eq2_proof_trace}, we obtain 
\begin{equation}\label{eq2_proof_1}
    \frac{|\mathcal{R}(u^{p, \alpha}_*, v)|}{\|v\|_{H_0^1(\Omega)}} \leq 
    \beta {\mathcal{J}}_*(U_*^{p, \alpha}),
\end{equation}
where $\beta := \max\{\beta_1, \Theta \beta_2\}$.  On the other hand, recall that the exact solution $\mathfrak{u}^p$ satisfies
\begin{equation}\label{eq2_proof_residual_exact}
\int_\Omega p \nabla \mathfrak{u}^p \cdot \nabla v \, dx - \int_\Omega l^p v \, dx = 0 \quad \forall v \in H_0^1(\Omega).
\end{equation}
Subtracting the two relations \eqref{eq2_proof_residual} and \eqref{eq2_proof_residual_exact}, we obtain
\begin{equation}\label{eq2_proof_residual_minus_exact}
\mathcal{R}(u^{p, \alpha}_*, v) = \int_\Omega p \nabla (u^{p, \alpha}_* - \mathfrak{u}^p) \cdot \nabla v \, dx.
\end{equation}
This means the residual functional represents the error in a weak sense. By the definition of the dual norm in $H^{-1}(\Omega)$ and using the coercivity inequality for the bilinear form \eqref{eq2_proof_residual_minus_exact}, which holds thanks to the assumed bounds on $p$, we obtain
\begin{equation}
\|u^{p, \alpha}_* - \mathfrak{u}^p\|^2_{H_0^1(\Omega)} \leq \frac{1}{\alpha} \sup_{v \in H_0^1(\Omega)} \frac{|\mathcal{R}(u^{p, \alpha}_*, v)|}{\|v\|_{H_0^1(\Omega)}},
\end{equation}
where $\alpha$ is the uniform coercivity constant. Finally, using the estimate in \eqref{eq2_proof_1}, we conclude that
\begin{equation}
 \|u^{p, \alpha}_* - \mathfrak{u}^p\|_{H_0^1(\Omega)}^2  \leq C_1 {\mathcal{J}}_*(U_*^{p, \alpha}),
\end{equation}
possibly with a different constant $C_1 = \beta/\alpha$.
\end{proof}
%%%%%%%%%%%%%%%%%%%%%%%%%%%%%%%%%%%%%%%%%%%%%
%%%%%%%%%%%%%%%%%%%%%%%%%%%%%%%%%%%%%%%%%%%%%
%%%%%%%%%%%%%%%%%%%%%%%%%%%%%%%%%%%%%%%%%%%%%
\noindent
Lemma ~\ref{Lemma_2_1} indicates that minimizing \( {\mathcal{J}}(U_*^{p, \alpha})\) with respect to \( w^{p, \alpha} \), \( v^{p, \alpha} \), and \( \mathbf{c}^{p, \alpha} \), and using exact singular functions, leads to a reliable approximation of the solution. Next, our objective is to refine the estimate \eqref{eq2_upper_bound_H01} by incorporating the approximation errors of eigenvalues and eigenvectors in singular functions.

{\begin{lemma}\label{Lemma_2_2}
Let \( p \in \mathbb{P} \) be fixed, and let   approximate singular functions of the form \eqref{eq2_sjp}. 
Then there exists a constant \( C_2 > 0 \), independent of \( p \), $u_*^{p, \alpha}$, and $u^{p, \alpha}$
such that the following estimate holds:
\begin{equation}\label{eq2_Lemma2_2_equation}
{\mathcal{J}}(U_*^{p, \alpha})
\leq C_2 \left(
\mathcal{J}(U^{p, \alpha})
+  \| \mathbf{c}^{p, \alpha}\|_{\ell ^2}^2
\left(
\| \vartheta^p - \mu^p \|^2_{L^2(0, 2\pi)}
+ \| \lambda^p - \Lambda^p \|^2_{\ell^2}
\right)
\right).
\end{equation}
\end{lemma}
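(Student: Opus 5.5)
The plan is to compare the two residual functionals term by term. Write $\mathcal{J}(U_*^{p,\alpha})=\mathcal{J}_*(U_*^{p,\alpha})$ from Lemma~\ref{Lemma_2_1} with the exact source terms $\mathcal{S}^p_{ij}$, and $\mathcal{J}(U^{p,\alpha})$ for the same functional with $\mathcal{S}^p_{ij}$ replaced by the computed $\mathbf{S}^p_{ij}$ of \eqref{eq2_mathbf{S}j}. Both use the same coefficients $\mathbf{a}^{p,\alpha},\mathbf{b}^{p,\alpha},\mathbf{c}^{p,\alpha}$ and the same principal part $w^{p,\alpha}+v^{p,\alpha}$.

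\textbf{Step 1: the jump terms coincide.} Neither singular family enters the jump term: the exact $\mathfrak{s}^p_{ij}$ satisfy $[p\nabla\mathfrak{s}^p_{ij}\cdot\mathbf{n}]=0$, and the loss \eqref{eq2_loss_total_LS-ReCoNN} only contains the jump of $w^{p,\alpha}+v^{p,\alpha}$. So the $\Theta$-terms in the two functionals are identical.

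\textbf{Step 2: reduce the bulk terms to the source mismatch.} By adding and subtracting $p\langle\mathbf{c}^{p,\alpha},\mathbf{S}^p\rangle$ and using $(a+b)^2\le 2a^2+2b^2$,
\[
\mathcal{J}(U_*^{p,\alpha})\le 2\,\mathcal{J}(U^{p,\alpha})+2\,\|p\,\langle\mathbf{c}^{p,\alpha},\mathcal{S}^p-\mathbf{S}^p\rangle\|_{L^2(\Omega)}^2 .
\]
Cauchy--Schwarz in the finite index set then gives
\[
\|p\,\langle\mathbf{c}^{p,\alpha},\mathcal{S}^p-\mathbf{S}^p\rangle\|_{L^2(\Omega)}^2\le \max(p)^2\,\|\mathbf{c}^{p,\alpha}\|_{\ell^2}^2\sum_{ij}\|\mathcal{S}^p_{ij}-\mathbf{S}^p_{ij}\|_{L^2(A_{\delta_1,\delta_2})}^2 .
\]
Since $\mathbb{P}$ is bounded in the examples, $\max(p)$ is absorbed into $C_2$.

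\textbf{Step 3: estimate each difference on the annulus.} On $A_{\delta_1,\delta_2}$ we have
\[
\mathcal{S}^p_{ij}-\mathbf{S}^p_{ij}=2\eta'(r)\bigl(\Lambda r^{\Lambda-1}\vartheta(\theta)-\lambda r^{\lambda-1}\mu(\theta)\bigr)+\bigl(\eta''+\tfrac1r\eta'\bigr)\bigl(r^{\Lambda}\vartheta-r^{\lambda}\mu\bigr).
\]
Only $\partial_r$ appears, so no angular derivatives arise; this is the point already stressed in Appendix~\ref{Strong Formulation of the Decomposed Solution}. Each bracket splits as
\[
r^{\Lambda}\vartheta-r^{\lambda}\mu=r^{\Lambda}(\vartheta-\mu)+(r^{\Lambda}-r^{\lambda})\mu .
\]
Integrating in polar coordinates with $r\in(\delta_1,\delta_2)$ bounded away from $0$, the first piece is bounded by $C\|\vartheta-\mu\|_{L^2(0,2\pi)}$. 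For the second, the mean value theorem gives $|r^{\Lambda}-r^{\lambda}|\le C(\delta_1,\delta_2)|\Lambda-\lambda|$ uniformly for exponents in $[0,1]$. The same holds for $\Lambda r^{\Lambda-1}-\lambda r^{\lambda-1}$, using that $r^{-1}$ and $\log r$ are bounded on the annulus. This yields the piece $C|\Lambda-\lambda|\,\|\mu\|_{L^2}$. Summing over $ij$ and substituting into Step 2 gives \eqref{eq2_Lemma2_2_equation}.

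\textbf{Main obstacle: uniformity of the constant in $p$.} The factor $\|\mu^p\|_{L^2}$ must be controlled, and I would normalize the computed eigenfunctions, for example $\|\mu^p_{ij}\|_{L^2}=1$ or $B^p$-orthonormal with $p$ bounded below. The constant $C(\delta_1,\delta_2)$ also requires exponents $\lambda,\Lambda$ confined to $[0,1]$; if a computed $\lambda$ falls slightly outside, I would restrict to a compact exponent range such as $[0,2]$. Finally, $\max(p)$ must be bounded over $\mathbb{P}$. With these normalizations the constant depends only on $\eta$, $\delta_1$, $\delta_2$ and the bounds of $\mathbb{P}$, as the statement requires.
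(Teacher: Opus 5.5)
Your proposal is correct and follows essentially the same route as the paper. You isolate $\|p\langle\mathbf{c}^{p,\alpha},\mathcal{S}^p-\mathbf{S}^p\rangle\|_{L^2}$ by the triangle inequality, write each source term as an angular function times the radial factor $2\lambda\eta'(r)r^{\lambda-1}+(\eta''(r)+\eta'(r)/r)r^{\lambda}$, and split the difference into an angular $L^2$ error plus an exponent error controlled by Lipschitz dependence on the exponent over the annulus, with normalized eigenfunctions. Your version is slightly more careful than the paper's: you square via $(a+b)^2\le 2a^2+2b^2$, use Cauchy--Schwarz to get $\|\mathbf{c}^{p,\alpha}\|_{\ell^2}$ instead of the paper's loose $L^\infty$ factor, and make the dependence on $\max p$ and the normalization explicit.
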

}

%%%%%%%%%%%%%%%%%%%%%%%%%%%%%%%%%%%%%%%%%%%%%
%%%%%%%%%%%%%%%%%%%%%%%%%%%%%%%%%%%%%%%%%%%%%
%%%%%%%%%%%%%%%%%%%%%%%%%%%%%%%%%%%%%%%%%%%%%
\begin{proof}
A straightforward use of the triangle inequality gives 
\begin{equation}\label{eq2_proof_2_1}
\begin{array}{rl}
\|p\left(\Delta  (w^{p, \alpha} + v^{p, \alpha})+ \langle \mathbf{c}^{p, \alpha}, \mathcal{S}^p\rangle\right)-l^p\|_{L^2}\leq &\hspace{-2mm}  \|p\left(\Delta  (w^{p, \alpha} + v^{p, \alpha}) +\langle \mathbf{c}^{p, \alpha}, \mathbf{S}^p\rangle\right)-l^p\|_{L^2}+\\
& \hspace{-2mm}\|\mathbf{c}^{p, \alpha}\|_{L^\infty}\,\|p\left(\mathcal{S}^p-\mathbf{S}^p\right)\|_{L^2},
\end{array}
\end{equation}
where \(\mathbf{S}^p = \left( \left( \mathbf{S}^p_{ij} \right)_{j=1}^{N_i} \right)_{i=1}^K\). 
According to \eqref{eq2_proof_2_1}, we turn to estimating the $L^2$ distance between $\mathcal{S}^p$ and $\mathbf{S}^p$. 
By denoting 
$$
G(r, \lambda) := 
2\lambda \eta'(r) r^{\lambda-1}
+ \left( \eta''(r) + \frac{1}{r} \eta'(r) \right) r^{\lambda}, 
$$
we may write $\mathcal{S}^p_{ij}(r,\theta)=\vartheta_{ij}^p(\theta)G(r,\Lambda_{ij}^p)$ and $\mathbf{S}^p_{ij}(r,\theta)=\mu_{ij}^p(\theta)G(r,\lambda_{ij}^p)$. Then,
\begin{equation}
\begin{split}
\|p\left(\mathcal{S}^p-\mathbf{S}^p\right)\|_{L^2}^2 = & \sum_{i = 1}^K\sum_{j=1}^{N_i}\int_{\delta_1}^{\delta_2}\int_0^{2\pi} p^2(\theta)\left(\vartheta^p_{ij}(\theta)G(r,\Lambda^p_{ij})-\mu^p_{ij}(\theta)G(r,\lambda^p_{ij})\right)^2\,d\theta\,dr\\
\lesssim  &\sum_{i = 1}^{N_s}\sum_{j=1}^{N_3}\int_{\delta_1}^{\delta_2}\int_0^{2\pi} p^2(\theta)\left(\vartheta^p_{ij}(\theta)-\mu^p_{ij}(\theta)\right)^2G(r,\lambda^p_{ij})^2+\\
&\hspace{2.7cm} p^2(\theta)\left(G(r,\Lambda^p_{ij})-G(r,\lambda^p_{ij})\right)^2\vartheta^p_{ij}(\theta)^2\,d\theta\,dr.\\
\end{split}
\end{equation}
We observe that, for $r \in [\delta_1,\delta_2]$ and $\lambda \in [0,1]$, 
the function $G(r,\lambda)$ is bounded and Lipschitz in $\lambda$ and 
uniformly in $r$. Furthermore, without loss of generality, we can take $\vartheta^p$ and $\mu^p$ to have $L^2$-norm equal to one, leading to the upper bound of 
$$
\|p\left(\mathcal{S}^p-\mathbf{S}^p\right)\|_{L^2}^2 \leq \tilde{c}\left(
\| \vartheta^p - \mu^p \|^2_{L^2(0, 2\pi)}
+ \| \lambda^p - \Lambda^p \|^2_{\ell^2}
\right),
$$
where $\tilde{c}>0$ is a constant. Thus, we obtain \eqref{eq2_Lemma2_2_equation} with $C_2 = \max \{\tilde{c},  1 \}$.
\end{proof}

\begin{theorem}\label{Theorem_single}
Let \( p \in \mathbb{P} \) be a fixed parameter. There exists a constant $C > 0$, independent of the LS-ReCoNN solution $u^{p, \alpha}$, such that
\begin{equation}\label{eq:single_param_estimate}
\left\| u^{p, \alpha} - \mathfrak{u}^p \right\|_{H_0^1(\Omega)}^2 
\leq C \left( \mathcal{J}(U^{p, \alpha})
+ \| \mathbf{c}^{p, \alpha}\|_{\ell ^2}^2 \left( \| \vartheta ^p - \mu^p \|_{L^2(0, 2\pi)}^2 + \| \lambda^p - \Lambda^p \|_{\ell ^2}^2 \right) \right) ,
\end{equation}
\end{theorem}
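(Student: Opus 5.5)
The natural route is to chain Lemma \ref{Lemma_2_1} and Lemma \ref{Lemma_2_2} through an auxiliary conforming function. Given the LS-ReCoNN solution $u^{p,\alpha}=w^{p,\alpha}+v^{p,\alpha}+\langle \mathbf{c}^{p,\alpha},\mathbf{s}^p\rangle$, I would introduce its companion
\[
u_*^{p,\alpha}:=w^{p,\alpha}+v^{p,\alpha}+\langle \mathbf{c}^{p,\alpha},\mathfrak{s}^p\rangle\in U_*^{p,\alpha}.
\]
It has the same principal part and the same coefficients, but the FE-computed singular functions are replaced by the exact ones $r^{\Lambda^p_{ij}}\vartheta^p_{ij}\eta$. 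The triangle inequality then gives
\[
\|u^{p,\alpha}-\mathfrak{u}^p\|_{H^1_0}^2\le 2\|u_*^{p,\alpha}-\mathfrak{u}^p\|_{H^1_0}^2+2\|\langle \mathbf{c}^{p,\alpha},\mathbf{s}^p-\mathfrak{s}^p\rangle\|_{H^1_0}^2 .
\]
The first term is controlled by Lemma \ref{Lemma_2_1}, which gives $C_1\mathcal{J}_*(U_*^{p,\alpha})$. Lemma \ref{Lemma_2_2} then bounds this by $C_1C_2\big(\mathcal{J}(U^{p,\alpha})+\|\mathbf{c}^{p,\alpha}\|_{\ell^2}^2(\|\vartheta^p-\mu^p\|^2_{L^2}+\|\lambda^p-\Lambda^p\|^2_{\ell^2})\big)$. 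This relies on the fact that the flux-jump terms of $\mathcal{J}_*$ and $\mathcal{J}$ coincide, since the principal part is the same, and that only the singular source term changes.

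The main obstacle is the second term. This is the $H^1_0$ norm of the difference of singular functions, and it involves $\nabla$ of $r^{\lambda}\mu(\theta)\eta(r)$. Its angular part contains $\mu'-\vartheta'$, while the stated bound only features $L^2$ angular errors. I would handle it in one of two ways.

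\emph{First option.} Avoid $H^1$ angular errors altogether by reading the theorem as an estimate for the conforming companion $u_*^{p,\alpha}$. Then the bound is exactly $C_1C_2(\cdots)$ with $C=C_1C_2$, and no extra term is needed. This matches the remark after Theorem \ref{Theorem_main} that only $L^2$ errors of the angular functions enter.

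\emph{Second option.} Keep $u^{p,\alpha}$ itself and bound the difference directly. In polar coordinates,
\[
\|\nabla(r^{\lambda}\mu\eta-r^{\Lambda}\vartheta\eta)\|_{L^2}^2\lesssim \|\mu'-\vartheta'\|^2_{L^2}+\|\mu-\vartheta\|^2_{L^2}+|\lambda-\Lambda|^2 .
\]
Here $r^{2\lambda-1}$ is integrable near $0$ because $\lambda>0$, and $r^\lambda$ is Lipschitz in $\lambda$ on $[\delta,1]$ in the $H^1$ sense. To close the argument within the stated $L^2$ form, I would then use that the FE space is finite-dimensional and that $\mu,\vartheta$ are eigenfunctions normalized in $L^2$. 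Specifically, I would try an inverse-type estimate showing that $\|\mu'-\vartheta'\|_{L^2}$ is controlled by $\|\mu-\vartheta\|_{L^2}+|\lambda-\Lambda|$ through the eigen-relations $G^p\rho=\lambda B^p\rho$ and the weak form \eqref{eq2_SL}. Testing the difference of the eigen-equations with $\mu-\vartheta$ gives $\int p|\mu'-\vartheta'|^2$ in terms of $\lambda^2,\Lambda^2$ times $L^2$ quantities plus a consistency term. That consistency term is where the argument may require absorbing an FE best-approximation error into $C$.

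Finally, I would collect the constants into a single $C$ independent of $u^{p,\alpha}$. Independence of $p$ follows from the uniform bounds on $p$ inherited from Lemmas \ref{Lemma_2_1}--\ref{Lemma_2_2}.
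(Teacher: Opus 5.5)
Your treatment of the first term is exactly the paper's proof. The paper disposes of Theorem~\ref{Theorem_single} in one line, as an immediate consequence of Lemmas~\ref{Lemma_2_1} and~\ref{Lemma_2_2}. You are right, however, that this chain only controls the companion $u_*^{p,\alpha}$, which is built with the exact singular functions $\mathfrak{s}^p$. It does not control the LS-ReCoNN solution $u^{p,\alpha}$, which uses the FE functions $\mathbf{s}^p$. The paper never estimates $\|\langle \mathbf{c}^{p,\alpha},\mathbf{s}^p-\mathfrak{s}^p\rangle\|_{H^1_0}$; it silently identifies the two functions. Your first option is therefore exactly what the paper's argument establishes, but it changes the statement rather than proving it. The second term is a genuine gap in your proposal, and one the paper shares without acknowledging it.

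Your second option does not close this gap as sketched. An inverse estimate only applies inside the FE space, and $\vartheta$ is not in it. Splitting off an interpolant reintroduces exactly the $H^1$ best-approximation error you want to avoid. Testing the eigen-equations with $\mu-\vartheta$ actually produces no consistency term, because the FE space is $H^1_{\mathrm{per}}$-conforming and the paper's quadrature is exact. Instead, for $p$-weighted $L^2$-normalized eigenfunctions, it gives the exact identity
\[
\int_0^{2\pi}p\,|(\mu-\vartheta)'|^2\,d\theta=(\lambda^2-\Lambda^2)+\Lambda^2\int_0^{2\pi}p\,(\mu-\vartheta)^2\,d\theta .
\]
Here $\Lambda^2$ is the exact Sturm--Liouville eigenvalue and $\lambda^2$ is its Galerkin approximation. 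The eigenvalue error enters linearly and is of the size of the squared angular energy error. By contrast, $\|\mu-\vartheta\|_{L^2}^2$ is of higher order (Aubin--Nitsche), and $|\lambda-\Lambda|^2$ is of the order of its square. So no discretization-independent constant can give the $L^2$-only form.

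What does prove the statement as written (fixed $p$, with $C$ independent only of $u^{p,\alpha}$) is a cruder observation: the singular functions do not depend on $\alpha$. By Cauchy--Schwarz, the second term is therefore at most $\|\mathbf{c}^{p,\alpha}\|_{\ell^2}^2E_p$, where $E_p:=\sum_{i,j}\|s^p_{ij}-\mathfrak{s}^p_{ij}\|_{H^1_0}^2$. This quantity is finite because $\lambda,\Lambda>0$. It also vanishes whenever $D_p:=\|\vartheta^p-\mu^p\|^2_{L^2}+\|\lambda^p-\Lambda^p\|^2_{\ell^2}=0$, since then $s^p_{ij}=\mathfrak{s}^p_{ij}$. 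Hence $E_p\le C_pD_p$ for some finite $C_p$, and $C=2C_1C_2+2C_p$ works.

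This constant is non-quantitative and need not be uniform in $p$, so your closing claim of $p$-independence is unsupported for this term. For the same reason, this argument supports neither the uniform constant used in Theorem~\ref{Theorem_parametric} nor the remark that only $L^2$ angular errors matter. A quantitative bound must retain the angular energy error, or equivalently the linear eigenvalue error.
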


\begin{proof}
This is an immediate consequence of Lemmas~\ref{Lemma_2_1} and~\ref{Lemma_2_2}, applied for a fixed parameter \(p\).  
The independence of the constant \(C\) from \(p\) follows from the uniform ellipticity assumptions.
\end{proof}

{\begin{theorem}\label{Theorem_parametric}
Let \( p \in \mathbb{P} \) be a fixed parameter. There exists a constant $C > 0$, independent of the LS-ReCoNN solution $u^{p, \alpha}$, such that
\begin{equation}\label{eq:parametric_estimate}
\int_{\mathbb{P}} \left\| u^{p, \alpha} - \mathfrak{u}^p \right\|_{H_0^1(\Omega)}^2 \, d\mu(p)
\leq C \left( \mathcal{J}_\mu(U^{p, \alpha})
+ \int_{\mathbb{P}} \| \mathbf{c}^{p, \alpha}\|_{\ell ^2}^2 \left( \| \vartheta ^p - \mu^p \|_{L^2(0, 2\pi)}^2 + \| \lambda^p - \Lambda^p \|_{\ell ^2}^2 \right) \, d\mu(p)\right) ,
\end{equation}
\end{theorem}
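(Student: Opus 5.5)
The plan is to integrate the single-parameter estimate of Theorem~\ref{Theorem_single} against the parameter measure $\mu$. The one point that needs checking is that the constant $C$ in \eqref{eq:single_param_estimate} is uniform in $p$.

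\textbf{Step 1: uniformity of the constant.} I would revisit the constants from Lemmas~\ref{Lemma_2_1} and~\ref{Lemma_2_2}:
\begin{itemize}
\item The Poincar\'e constant $\beta_1$ and the trace constant $\beta_2$ depend only on $\Omega$ and $\Gamma$, not on $p$.
\item The coercivity constant equals $\min(p)$, so it is bounded below uniformly provided the support of $\mu$ lies in a set where $p\geq p_{\min}>0$. This holds in all the paper's settings: for example $p_i\in[1,10]$, and in 1D $p_i\in(0.01,50)$, which is bounded below by $0.01$.
\item The constant $\tilde c$ in Lemma~\ref{Lemma_2_2} comes from bounds on $p^2$ and on $G(r,\lambda)$ and $\partial_\lambda G$ for $r\in[\delta_1,\delta_2]$, $\lambda\in[0,1]$. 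It is therefore uniform as long as $p\leq p_{\max}$ on the support of $\mu$.
\end{itemize}
Combining these gives a single constant $C$, depending only on $\Omega$, $\Gamma$, $\Theta$, $\eta$, $p_{\min}$ and $p_{\max}$, for which \eqref{eq:single_param_estimate} holds for $\mu$-a.e.\ $p$.

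\textbf{Step 2: integration.} Both sides of \eqref{eq:single_param_estimate} are nonnegative functions of $p$. I would assume they are measurable: the LS coefficients depend measurably on $p$ whenever the LS problem has a measurable minimizer, and the FE eigenpairs depend continuously on $p$ away from eigenvalue crossings. Integrating against $d\mu(p)$ and using linearity and monotonicity of the integral then gives
\[
\int_{\mathbb{P}}\|u^{p,\alpha}-\mathfrak{u}^p\|_{H^1_0}^2\,d\mu \leq C\int_{\mathbb{P}}\mathcal{J}(U^{p,\alpha})\,d\mu + C\int_{\mathbb{P}}\|\mathbf{c}^{p,\alpha}\|_{\ell^2}^2\bigl(\|\vartheta^p-\mu^p\|^2_{L^2}+\|\lambda^p-\Lambda^p\|^2_{\ell^2}\bigr)d\mu .
\]
By definition \eqref{eq2_loss_total_LS-ReCoNN}, the first integral on the right is $\mathcal{J}_\mu(U^{p,\alpha})$. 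Here $u^{p,\alpha}$ is taken to be the minimizer realizing the inner minimum, so that $\mathcal{J}(U^{p,\alpha})$ is exactly the integrand of $\mathcal{J}_\mu$. This yields the claim. Note that although the statement reads ``let $p$ be fixed,'' the estimate is a genuinely parametric one.

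\textbf{Main obstacle.} The main difficulty is Step 1, namely the uniformity in $p$ of both the coercivity constant and the Lemma~\ref{Lemma_2_2} constant. The latter also requires that eigenvalues stay in $[0,1]$ and that the eigenfunctions are $L^2$-normalized consistently for every $p$. I would handle this by stating explicitly the standing assumption $0<p_{\min}\leq p\leq p_{\max}$ on $\operatorname{supp}\mu$, and by fixing the sign and normalization of $\mu^p$ to match $\vartheta^p$, so that $\|\vartheta^p-\mu^p\|$ is meaningful.

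Measurability is a secondary technical point. If it is in doubt, I would replace the integrals by outer integrals, which preserves the inequality.
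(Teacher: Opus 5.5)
Your proposal is correct and follows the paper's proof. The paper simply integrates the single-parameter estimate of Theorem~\ref{Theorem_single} over $\mathbb{P}$, using that the constant $C$ is uniform in $p$. Your additions make precise what the paper leaves implicit under ``uniform ellipticity'': the standing assumption $0<p_{\min}\le p\le p_{\max}$ on the support of $\mu$, the identification of $u^{p,\alpha}$ with the LS minimizer, and the measurability remark.
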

}

\begin{proof}
The claim follows directly by integrating the estimate 
\eqref{eq:single_param_estimate} from Theorem~\ref{Theorem_single} over 
the parameter set \(\mathbb{P}\), noting that the constant \( C \) is uniform in \( p \).
\end{proof}

%%%%%%%%%%%%%%%%%%%%%%%%%%%%%%%%%%%%%%%%%%%%%
%%%%%%%%%%%%%%%%%%%%%%%%%%%%%%%%%%%%%%%%%%%%%
%%%%%%%%%%%%%%%%%%%%%%%%%%%%%%%%%%%%%%%%%%%%%
%%%%%%%%%%%%%%%%%%%%%%%%%%%%%%%%%%%%%%%%%%%%%
%%%%%%%%%%%%%%% Appendix F %%%%%%%%%%%%%%%%%%
%%%%%%%%%%%%%%%%%%%%%%%%%%%%%%%%%%%%%%%%%%%%%
%%%%%%%%%%%%%%%%%%%%%%%%%%%%%%%%%%%%%%%%%%%%%
%%%%%%%%%%%%%%%%%%%%%%%%%%%%%%%%%%%%%%%%%%%%%
%%%%%%%%%%%%%%%%%%%%%%%%%%%%%%%%%%%%%%%%%%%%%

\section{Details of the Loss Discretizations }\label{Loss Discretization Details}

\subsection{Parameter Sampling}\label{Parameter Sampling Details}
We generate the parameter samples $p_i \in [p_{\min}, p_{\max}]$ using the following transformation
\begin{equation}
p_i = \frac{p_{\min} + p_{\max}}{2} + \cos(z_i) \frac{p_{\max} - p_{\min}}{2},
\end{equation}
where $z_i$ is drawn from a uniform distribution over the interval $[0, \pi]$. This procedure induces a non-uniform, cosine-weighted distribution across the parameter space. Consequently, the method ensures denser sampling near the boundaries $p_{\min}$ and $p_{\max}$.

%%%%%%%%%%%%%%%%%%%%%%%%%%%%%%%%%%%%%%%%%%%%%
%%%%%%%%%%%%%%%%%%%%%%%%%%%%%%%%%%%%%%%%%%%%%
%%%%%%%%%%%%%%%%%%%%%%%%%%%%%%%%%%%%%%%%%%%%%
%%%%%%%%%%%%%%%%%%%%%%%%%%%%%%%%%%%%%%%%%%%%%
\subsection{Matrix Definitions}\label{Matrix Definitions}
Let $\mathbf{x}^{(1)} := \{ x^{(1)}_j \}_{j=1}^{J_1} \subset \Omega$ and $\mathbf{x}^{(2)}:= \{ x^{(2)}_j \}_{j=1}^{J_2} \subset \Gamma$ denote the random sets of quadrature points for the domain and interfaces, respectively. The corresponding $L^2$-norms are then approximated using Monte Carlo integration as follows
\begin{equation}\label{eq2_L2 discretization_PDE}
    \begin{array}{rl}
    \left\Vert p \left( \Delta \left(\mathbf{a}^{p, \alpha}\cdot\mathbf{w}^{\alpha} + \mathbf{b}^{p, \alpha}\cdot\mathbf{v}^{\alpha}\right) + {\mathbf{c}^{p, \alpha}}\cdot \mathbf{S}^{p} \right)- l^p \right\Vert_{L^2(\Omega)}^2 \approx & \displaystyle \frac{\text{Vol}(\Omega)}{J_1} \sum_{j = 1}^{J_1} \Big( p(x_j^{(1)}) \Big( \Delta \Big(\mathbf{a}^{p, \alpha}\cdot\mathbf{w}^{\alpha}(x_j^{(1)}) +\\ & \mathbf{b}^{p, \alpha}\cdot\mathbf{v}^{\alpha}(x_j^{(1)})\Big) + 
    {\mathbf{b}^{p, \alpha}}\cdot \mathbf{S}^{p}(x_j^{(1)}) \Big) - l^p(x_j^{(1)}) \Big), \\
    
    \left\Vert [p \nabla (\mathbf{a}^{p, \alpha}\cdot \mathbf{w}^{p} + \mathbf{b}^{p, \alpha}\cdot \mathbf{v}^{p}) \cdot \mathbf{n}] \right\Vert^2_{L^2(\Gamma)} \approx &\displaystyle \frac{\text{Length}(\Gamma)}{J_2} \Big[\sum_{j = 1}^{J_2} p(x_j^{(2)}) \nabla (\mathbf{a}^{p, \alpha}\cdot \mathbf{w}^{p}(x_j^{(2)}) + \\
    
    &\mathbf{b}^{p, \alpha}\cdot \mathbf{v}^{p}(x_j^{(2)})) \cdot \mathbf{n} \Big]. 
    \end{array}
\end{equation}
The approximations in \eqref{eq2_L2 discretization_PDE} are reformulated into the following matrix form
\begin{equation}
    \begin{array}{rl}\left\Vert p \left( \Delta \left(\mathbf{a}^{p, \alpha}\cdot\mathbf{w}^{\alpha} + \mathbf{b}^{p, \alpha}\cdot\mathbf{v}^{\alpha}\right) + {\mathbf{c}^{p, \alpha}}\cdot \mathbf{S}^{p} \right) - l^p \right\Vert_{L^2(\Omega)}^2 &\approx \left\Vert \mathbf{B}_1^{p, \alpha}(\mathbf{a}^{p, \alpha};\mathbf{b}^{p, \alpha};\mathbf{c}^{p, \alpha}) - \mathbf{l}^p_1 \right\Vert_2^2, \\
\label{eq2_L2 discretization_tc_Matrix}\left\Vert [p \nabla (\mathbf{a}^{p, \alpha}\cdot \mathbf{w}^{p} + \mathbf{b}^{p, \alpha}\cdot\mathbf{v}^{\alpha}) \cdot \mathbf{n}] \right\Vert^2_{L^2(\Gamma)} &\approx \left\Vert \mathbf{B}_2^{p, \alpha} (\mathbf{a}^{p, \alpha};\mathbf{b}^{p, \alpha}) - \mathbf{l}^p_2\right\Vert_2^2. 
\end{array}
\end{equation}
Herein, for each \( i = 1, 2 \), the matrix \( \mathbf{B}_i^\alpha \in \mathbb{R}^{J_i \times N} \) and the vector \( \mathbf{l}_i \in \mathbb{R}^{J_i} \) are defined as follows
\begin{equation}
    \begin{array}{rlrl}
(\mathbf{B}_1^{p, \alpha})_{j_1n} &= \displaystyle \sqrt{\frac{\text{Vol}(\Omega)}{J_1}} p(x_{j_1}^{(1)}) \left( \Delta \left(\mathbf{w}^{\alpha} + \mathbf{v}^{\alpha}\right)(x_{j_1}^{(1)}), \, \mathbf{S}^{p}(x_{j_1}^{(1)}) \right), &
(\mathbf{l}_1^p)_{j_1} &= \displaystyle \sqrt{\frac{\text{Vol}(\Omega)}{J_1}}\, l^p(x_{j_1}^{(1)}), \\[3mm]
(\mathbf{B}_2^{p, \alpha})_{j_2n} &= \displaystyle \sqrt{\frac{\text{Length}(\Gamma)}{J_2}} \left[ p(x_{j_2}^{(2)}) \nabla (\mathbf{w}^{\alpha} + \mathbf{v}^{\alpha})(x_{j_2}^{(2)}) \cdot \mathbf{n} \right],  &
(\mathbf{l}^p_2)_{j_2} &= \mathbf{0}_{J_2}.\label{eq2_jump_discretization}
\end{array}
\end{equation}
where $j_i = 1, \ldots, J_i$ for $i = 1, 2$ and $\mathbf{0}_{J_2}$ is a zero vector of dimension $J_2$.  Finally, we define
\begin{equation}
\mathbf{B}^{p,\alpha} =
\begin{bmatrix}
\mathbf{B}^{p,\alpha}_1  \\[3mm]
[\sqrt{\Theta}\,\mathbf{B}^{p,\alpha}_2; \mathbf{0}_{J_2\times N_3}]
\end{bmatrix},
\qquad
\mathbf{L}^p =
\begin{bmatrix}
 \mathbf{L}^p_1 \\[3mm]
\mathbf{L}^p_2
\end{bmatrix},
\end{equation}
where $\mathbf{0}_{J_2\times N_3}$ is a zero matrix of dimension $J_2\times N_3$.

\bibliographystyle{abbrv}
\bibliography{references}
\end{document}